\definecolor{ltgrey}{RGB}{180, 187, 198}
\numberwithin{equation}{section}
\numberwithin{figure}{section}
\newtheorem{THM}{Theorem}
\newtheorem{COR}[THM]{Corollary}
\newtheorem{theorem}{Theorem}[section]
\newtheorem{lemma}[theorem]{Lemma}
\newtheorem{proposition}[theorem]{Proposition}
\newtheorem{corollary}[theorem]{Corollary}
\newtheorem{remark}[theorem]{Remark}
\newtheorem{example}[theorem]{Example}
\theoremstyle{definition}
\newtheorem{definition}[theorem]{Definition}
\newcommand{\C}{{\mathbb{C}}}
\newcommand{\Z}{{\mathbb{Z}}}
\newcommand{\N}{{\mathbb{N}}}
\newcommand{\B}{\mathcal{B}}
\newcommand{\p}{\mathfrak{p}}
\def\invv{\mathsf{i}\mathsf{n}\mathsf{v}}
\def\invv{{\mathsf i}{\mathsf n}{\mathsf v}}
\definecolor{gold}{rgb}{0.85,.66,0}
\definecolor{cherry}{rgb}{0.9,.1,.2}
\definecolor{burgundy}{rgb}{0.8,.2,.2}
\definecolor{orangered}{rgb}{0.85,.3,0}
\definecolor{orange}{rgb}{0.85,.4,0}
\definecolor{olive}{rgb}{.45,.4,0}
\definecolor{lime}{rgb}{.6,.9,0}
\definecolor{green}{rgb}{.2,.7,0}
\definecolor{grey}{rgb}{.4,.4,.2}
\definecolor{brown}{rgb}{.4,.3,.1}
\def\m{{\mathbf m}}
\def\S{{\mathbf{S}}}
\def\cat{\mathsf{Cat}}
\newcommand{\Hess}{\mathrm{Hess}}
\newcommand{\Clan}{\mathbf{Clan}}
\newcommand{\Supp}{\mathrm{Supp}}
\newcommand{\area}{\mathrm{area}}
\def\x{\mathsf{x}}
\def\ov{\overline}
\def\O{{\mathcal O}}
\newcommand{\mf}[1]{\mathfrak{#1}}
\begin{document}

\title{$K$-Orbit closures and Hessenberg varieties}

\author{Mahir Bilen Can}
\address{Department of Mathematics\\ Tulane University \\ 6823 St. Charles Avenue \\ New Orleans, Louisiana 70118 \\ U.S.A. }
\email{mahirbilencan@gmail.com}

\author{Martha Precup}
\address{Department of Mathematics\\ Washington University in St. Louis \\ One Brookings Drive \\ St. Louis, Missouri  63130 \\ U.S.A. }
\email{martha.precup@wustl.edu}

\author{John Shareshian}
\address{Department of Mathematics\\ Washington University in St. Louis \\ One Brookings Drive \\ St. Louis, Missouri  63130 \\ U.S.A. }
\email{jshareshian@wustl.edu}

\author{\"Ozlem U\u{g}urlu}
\address{Department of Mathematics and Statistics\\ Saint Louis University \\ 220 N. Grand Blvd\\ St. Louis, Missouri  63103\\ U.S.A. }
\email{ozlem.ugurlu@slu.edu}

\maketitle

\begin{abstract}
This article explores the relationship between Hessenberg varieties associated with semisimple operators with two eigenvalues and orbit closures of a spherical subgroup of the general linear group.  
    We establish the specific conditions under which these semisimple Hessenberg varieties are irreducible.  We determine the dimension of each irreducible Hessenberg variety under consideration and show that the number of such varieties is a Catalan number. We then apply a theorem of Brion to compute a polynomial representative for the cohomology class of each such variety.
    Additionally, we calculate the intersections of a standard (Schubert) hyperplane section of the flag variety with each of our Hessenberg varieties and prove this intersection possess a cohomological multiplicity-free property.
\end{abstract}

\vspace{.5cm}
\noindent \textbf{Keywords.} Hessenberg varieties, symmetric varieties, involutions, Catalan numbers, Monk's formula

\smallskip
\noindent \textbf{MSC.} 14M15, 14M27, 05A05

\tableofcontents
\section{Introduction}{\label{S:Introduction}}

Let $n$ be a positive integer and let $G=GL_n(\C)$.  Given positive integers $p,q$ such that $p+q=n$, let $K$ be a Levi subgroup of the stabilizer in $G$ of a $p$-dimensional subspace of $\C^n$.  So, $K \cong GL_p(\C) \times GL_q(\C)$.  Then $K$ is spherical.
We examine coincidences between two well-studied classes of subvarieties in the type A flag variety: Hessenberg varieties and $K$-orbit closures. We identify a collection of Hessenberg varieties, each equal to the closure of a single $K$-orbit.
Leveraging the theory of $K$-orbits we answer, for this particular collection, questions that are difficult to settle for arbitrary Hessenberg varieties.

Let $B$ be the Borel subgroup of $G$ consisting of upper triangular matrices. The flag variety $\B=G/B$ has been studied extensively.  More recently, Hessenberg varieties, which were first studied due to their connection with numerical linear algebra, have been of interest to geometers, representation theorists, and combinatorialists.  

We identify $\B$ with the collection of full flags
$$
V_\bullet=0<V_1<\ldots<V_{n-1}<V_n=\C^n
$$
with $\dim V_i=i$ for all $i \in [n]:=\{1,\dots, n\}$. A {\it Hessenberg vector} is a weakly increasing sequence $\m=(m_1,\ldots,m_n)$ of integers satisfying $i \leq m_i \leq n$ for each $i \in [n]$.  Given such $\m$ and any $n \times n$ matrix $\x$, the associated {\it Hessenberg variety} is
$$
\Hess(\x,\m):=\{V_\bullet \in \B \mid \x V_i \leq V_{m_i} \mbox{ for all } i \in [n]\}.
$$
While there have been more recent developments, the survey \cite{AbeHoriguchi2020} by Abe and Horiguchi gives a nice summary of the work on Hessenberg varieties and connections to various fields.

Despite their elementary definition, some basic questions about the structure of Hessenberg varieties remain wide open.  The ones of interest herein follow.
\begin{enumerate}[label=(\Alph*)]
    \item What is the dimension of $\Hess(\x,\m)$?
    \item For which matrices $\x$ and Hessenberg vectors $\m$ is $\Hess(\x,\m)$ irreducible?
    \item If $\Hess(\x,\m)$ is irreducible, can we describe cohomology class in $H^\ast(\B; \Z)$ it represents?
\end{enumerate}

Let us give an example illustrating that Questions (A) and (B) are subtle, in that their answers can depend on the choice of matrix $\x$ when $\m$ is fixed.  

\begin{example} \label{ex.23n}
Consider the Hessenberg vector $\m=(2,3,4,\ldots,n,n)$.  If ${\mathsf s}$ is a regular semisimple matrix, then by work of De Mari, Procesi, and Shayman in~\cite{demariprocesishayman}, $\Hess({\mathsf s},\m)$ is isomorphic to the toric variety associated to the fan of type $A_{n-1}$ Weyl chambers.  In particular, $\Hess({\mathsf s},\m)$ is irreducible of dimension $n-1$.  

For $i \in [n-1]$, let $w^i \in \S_n$ be the unique permutation satisfying
\begin{itemize}
\item $w^i(1)=i+1$,
\item $w^i(n)=i$, and
\item $w^i(j)>w^i(j+1)$ for $2 \leq j \leq n-2$.
\end{itemize}
We write $E_{1n}$ for the $n \times n$ elementary matrix whose only nonzero entry is in its first row and last column.  As shown by Tymoczko in \cite{Tymoczko2006b}, $\Hess(E_{1n},\m)$ is the union of the Schubert varieties $X_{w^i}$, from which it follows that $\Hess(E_{1n},\m)$ has $n-1$ irreducible components, each of dimension $1+{{n-1} \choose {2}}.$
\end{example}
We remark that for a fixed Hessenberg vector $\m$ there can be irreducible varieties $\Hess(\x,\m)$ and $\Hess({\mathsf y},\m)$ of differing dimensions.  For example, if $m_1<n$ and $m_j=n$ for $j>1$, then $\Hess(\x,\m)=\B$ if and only if $\x$ is scalar, while $\Hess({\mathsf y},\m)$ is irreducible of dimension $\dim(\B) - (n-m_1)$ whenever ${\mathsf y}$ is regular.

The results on $\Hess(E_{1n},(2,3,\ldots,n,n))$ discussed in Example~\ref{ex.23n} are worth further consideration.  The key point is that for each $g \in B$, $E_{1n}g=\lambda gE_{1n}$ for some $\lambda \in \C$.  That is, the Borel subgroup $B$ stabilizes the subspace spanned by $ E_{1n}$ under the adjoint action.  It follows directly that for every Hessenberg vector $\m$, $\Hess(E_{1n},\m)$ is $B$-invariant and therefore a union of $B$-orbits.  Thus every irreducible component of $\Hess(E_{1n},\m)$ is a Schubert variety $X_w$ for some $w \in \S_n$.  One can determine which $X_w$ appears as such components for any given $\m$; see~\cite{Tymoczko2006b, Abe-Crooks}.

We use the approach described in the previous paragraph to study $\Hess(\x,\m)$ when $\x$ is semisimple with exactly two distinct eigenvalues.  Given such $\x$ with eigenvalues $\lambda,\mu$ of respective multiplicities $p,q$ (hence $p+q=n$), let $Y,Z$ be the associated eigenspaces.  Thus $\C^n=Y \oplus Z$.  The simultaneous stabilizer $K$ of $Y$ and $Z$ in $G$ is isomorphic to $GL_p(\C) \times GL_q(\C)$, and it is straightforward to see that $\Hess(\x,\m)$ is a union of $K$-orbits. It is well-known (see for example \cite{Wolf1969}) that $K$ is {\it spherical}, that is, $K$ has finitely many orbits on $\B$.  We will use the classification and theory of $K$-orbits on $\B$ due to Yamamoto~\cite{Yamamoto} and many others~\cite{Matsuki-Oshima, Wyser, CanUgurlu} to address Questions (A), (B), (C) above for Hessenberg varieties defined using such $\x$.

Assume as above that the semisimple matrix $\x$ has exactly two distinct eigenvalues $\lambda,\mu$ with respective multiplicities $p,q$ and fix a Hessenberg vector $\m$.  We observe that the isomorphism type of $\Hess(\x,\m)$ depends only on $p$ and $q$.  Indeed, since $\Hess(g^{-1}\x g,\m)=g\Hess(\x,\m)$ for every $g \in G$, we may assume that $\x=(x_{ij})$ is diagonal with $x_{ii}=\lambda$ for $i \in [p]$ and $x_{ii}=\mu$ for $p<i \leq n$.  Moreover, it is straightforward to show that for scalars $\alpha \neq 0$ and $\beta$,
$$
\Hess(\alpha\x+\beta I,\m)=\Hess(\x,\m)
$$
hence $\lambda$ and $\mu$ are irrelevant and our observation follows.  So, there is no harm in writing $\x_{p,q}$ to denote any such semisimple matrix $\x$.

We summarize now our results on $\Hess(\x_{p,q},\m)$.  Our first result addresses Question (B).

\begin{THM}[See Corollaries \ref{cor.xpqorbitclosure} and \ref{cor.312free} and Theorem \ref{thm.irreducible} below] \label{thm1.intro} The following conditions on the Hessenberg variety $\Hess(\x_{p,q},\m)$ are equivalent.
\begin{enumerate}
\item $\Hess(\x_{p,q},\m)$ is irreducible.
\item There is a Hessenberg vector $(\ell_1,\ldots,\ell_q)$ of length $q$ such that $m_i=\ell_i+p$ for $i \leq q$ and $m_i=n$ for $q<i \leq n$.
\item $\Hess(\x_{p,q}, \m)$ is the closure of one of $\frac{1}{q+1}{{2q} \choose {q}}$ orbits of $K$ on $\B$.  This collection of orbits is 
naturally parameterized by $231$-free permutations in~$\S_q$.
\end{enumerate}
\end{THM}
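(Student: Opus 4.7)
The plan is to exploit the fact that $K=GL_p(\C)\times GL_q(\C)$ centralizes $\x_{p,q}$, so $\Hess(\x_{p,q},\m)$ is $K$-stable. Since $K$ is spherical on $\B$, the Hessenberg variety is a finite union of $K$-orbit closures, and each of its irreducible components is the closure of a single $K$-orbit. The first step is to make this decomposition explicit: using the combinatorial classification of $K$-orbits on $\B$ via clans (following Yamamoto, Matsuki--Oshima, Wyser, and Can--Ugurlu), I would choose for each clan a standard flag representative built from the eigenbasis decomposition $\C^n=Y\oplus Z$ and translate the Hessenberg condition $\x_{p,q}V_i\le V_{m_i}$ into an explicit set of inequalities on the clan data.

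For the chain $(2)\Rightarrow(3)\Rightarrow(1)$: when $\m$ has the prescribed form, the data $(\ell_1,\ldots,\ell_q)$ encodes a distinguished clan whose orbit closure contains every other $K$-orbit meeting the translated inequalities. Hessenberg vectors of length $q$ are in bijection with Dyck paths, and thus with $231$-avoiding permutations in $\S_q$, so this simultaneously yields the parametrization in (3) and the Catalan count $\frac{1}{q+1}\binom{2q}{q}$. The implication $(3)\Rightarrow(1)$ is immediate since the closure of a single orbit is irreducible.

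The main work lies in $(1)\Rightarrow(2)$, which I would prove by contrapositive. If $\m$ fails to have the form in (2), there is some violation: either $m_i<n$ for some $i>q$, or the truncated sequence $(m_1-p,\ldots,m_q-p)$ fails to be a Hessenberg vector on $[q]$. In each case the goal is to exhibit two distinct, incomparable maximal $K$-orbit closures inside $\Hess(\x_{p,q},\m)$, which forces reducibility. This requires a usable description of the $K$-orbit closure order on clans together with explicit constructions of clans that satisfy the translated Hessenberg inequalities but are not dominated by any common larger clan in the closure order.

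The principal obstacle I anticipate is this last implication: producing, for each bad $\m$, an explicit pair of incomparable maximal $K$-orbits requires a careful case analysis together with a workable combinatorial model for the $K$-orbit closure order. The Catalan enumeration and the bijection with $231$-free permutations should then fall out of the standard combinatorics of Dyck paths and the encoding of the clans singled out by the chosen orbits.
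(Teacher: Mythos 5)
Your overall skeleton agrees with the paper: exploit $K$-stability, decompose $\Hess(\x_{p,q},\m)$ into $K$-orbits via clans, translate the Hessenberg membership condition into explicit inequalities on clan data, and reduce irreducibility to the existence of a unique maximal orbit. The paper does exactly this (Proposition~\ref{prop.orbitsinhess}, Proposition~\ref{prop.irr.criterion.1}). However, your assessment of where the work lies is inverted, and that has concrete consequences for the proof plan.

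First, you treat $(2)\Rightarrow(3)$ as a near-formality — ``the data $(\ell_1,\ldots,\ell_q)$ encodes a distinguished clan whose orbit closure contains every other $K$-orbit meeting the translated inequalities.'' That claim is the technical heart of the argument. Once $\m$ has the prescribed form, it is easy to read off the candidate clan $\gamma_w$ (with $w$ the $231$-free permutation dual to the Dyck path of $(\ell_1,\ldots,\ell_q)$), and easy to see $\O_{\gamma_w}\subset\Hess(\x_{p,q},\m)$. What is \emph{not} easy is showing that every other clan $\gamma$ with $\O_\gamma\subset\Hess(\x_{p,q},\m)$ satisfies $\gamma\leq\gamma_w$ in Wyser's inclusion order. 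The paper proves this via a delicate inductive comparison of the statistics $\gamma(i,j)$ against $\gamma_w(i,j)$ (Lemma~\ref{lemma.technical} and the proof of Theorem~\ref{thm.irreducible}), using the $312$-avoidance of $w^{-1}$ in an essential way. Your sketch gives no indication of how to carry this out, and it is not a consequence of the bijection between Dyck paths and $231$-avoiders.

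Second, you anticipate the contrapositive of $(1)\Rightarrow(2)$ as the hard case analysis: exhibit, for each bad $\m$, a pair of incomparable maximal $K$-orbits. This is more work than needed. The paper sidesteps case analysis entirely with a short, clean observation: the two purely-signed clans $\sigma=+\!\cdots\!+-\!\cdots\!-$ and $\tau=-\!\cdots\!-+\!\cdots\!+$ always parameterize orbits lying in $\Hess(\x_{p,q},\m)$ (for \emph{every} $\m$, by Proposition~\ref{prop.orbitsinhess}). So if $\Hess(\x_{p,q},\m)=\overline{\O_\gamma}$ for a single clan $\gamma$, then $\sigma\leq\gamma$ and $\tau\leq\gamma$, and Wyser's inequalities instantly force $\gamma=\gamma_w$ for some $w\in\S_q$ (Lemma~\ref{lemma.threeconditions}). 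From this the form of $\m$ in (2) follows directly (Corollary~\ref{cor.xpqorbitclosure}), and a single exchange lemma (Lemma~\ref{prop.312free}) forces $w$ to avoid $231$ by exhibiting a strictly larger clan whenever a forbidden pattern is present. No enumeration of ``violations'' of (2) and no explicit construction of incomparable pairs is required; you would be reinventing a harder route to an easier destination while leaving the actual bottleneck — the dominance of $\gamma_w$ in $(2)\Rightarrow(3)$ — unaddressed.
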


There is a formula for the dimensions of $K$-orbits in a flag variety (see \cite[Section 2.3]{Yamamoto}). This formula allows us to compute and write a nice formula for the dimension of any irreducible $\Hess(\x_{p,q},\m)$, thereby addressing Question (A) for this collection.

\begin{COR}[See Corollary~\ref{cor.dimension} below] \label{cor.intro} If $\m=(m_1,\ldots,m_n)$ is a Hessenberg vector such that $\Hess(\x_{p,q},\m)$ is irreducible, then
$$
\dim\Hess(\x_{p,q},\m)=\sum_{i=1}^n(m_i-i).
$$
\end{COR}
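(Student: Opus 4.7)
The plan is to combine Theorem \ref{thm1.intro} with Yamamoto's dimension formula for $K$-orbits on $\B$. By Theorem \ref{thm1.intro}(3), whenever $\Hess(\x_{p,q},\m)$ is irreducible it equals the closure $\overline{\O_w}$ of a single $K$-orbit, where $w$ is a $231$-free element of $\S_q$. Since the $K$-orbit closure is irreducible of the same dimension as $\O_w$,
$$
\dim \Hess(\x_{p,q},\m) \;=\; \dim \O_w.
$$
Moreover, by Theorem \ref{thm1.intro}(2), the Hessenberg vector $\m$ is determined by $w$ through a Hessenberg vector $(\ell_1,\ldots,\ell_q)$ of length $q$, namely $m_i=\ell_i+p$ for $i\leq q$ and $m_i=n$ for $i>q$.

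Next I would invoke the $K$-orbit dimension formula from \cite[Section 2.3]{Yamamoto}. For a $K$-orbit parameterized by a $(p,q)$-clan, Yamamoto's formula expresses the dimension as the sum of a baseline piece $\binom{p}{2}+pq$, which is the dimension of the open $K$-orbit on a certain partial flag variety (equivalently, $\dim GL_p/B_p + \dim \Gr(q,n)$), plus a combinatorial excess statistic attached to the clan that measures how far $\O_w$ is from the open orbit.

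With these pieces in place, the proof reduces to a combinatorial identity. Splitting the target sum,
$$
\sum_{i=1}^n(m_i-i) \;=\; \sum_{i=1}^q(\ell_i-i) \;+\; pq \;+\; \binom{p}{2},
$$
the baseline terms $pq+\binom{p}{2}$ already agree with the baseline in Yamamoto's formula, so it suffices to verify that Yamamoto's excess statistic applied to the clan (equivalently, $231$-free permutation) corresponding to $w$ equals $\sum_{i=1}^q(\ell_i-i)$. Using the explicit bijection in Theorem \ref{thm1.intro} between $231$-free permutations in $\S_q$ and Hessenberg vectors of length $q$ (which will be constructed in the proof of that theorem), I would verify this identity either by induction on $q$, peeling off the rightmost descent of $w$, or by directly matching the inversions of $w$ contributing to Yamamoto's statistic with the pairs $(i,j)$ with $i\leq \ell_j$ counted by $\sum (\ell_i-i)$.

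The main obstacle is this last combinatorial matching: one must track Yamamoto's statistic (originally phrased for clans) through the bijection of Theorem \ref{thm1.intro} into the Hessenberg vector $(\ell_1,\ldots,\ell_q)$. Once the parameterizations in Theorem \ref{thm1.intro} are made fully explicit, however, the verification is a delicate but essentially mechanical inversion count, and the corollary follows.
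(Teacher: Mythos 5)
Your proposal follows essentially the same route as the paper: reduce to Yamamoto's dimension formula for the $K$-orbit $\O_{\gamma_w}$, pull out the baseline terms, and verify that the remaining combinatorial statistic on the clan equals $\sum_{i=1}^q(\ell_i-i)$; the paper does exactly this, proving the needed identity (that $\ell(w)=\sum_{i=1}^q(\max\{w^{-1}(k)\mid k\le i\}-i)$ when $w^{-1}$ is $312$-free) by the induction on $q$ that you suggest. One small caveat: Yamamoto's formula as used in the paper is $\dim\O_\gamma=\ell(\gamma)+\binom{p}{2}+\binom{q}{2}$, so the split into baseline $\binom{p}{2}+pq$ plus a permutation-length excess is the \emph{output} of a computation with $\ell(\gamma_w)$, not the form of Yamamoto's statement itself.
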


Previous work on the Question (C) addresses the case where $\x$ is regular.  
It is known that the class of any regular Hessenberg variety depends only on the underlying Hessenberg vector~\cite{Abe-Fujita-Zeng}. Polynomial representatives for the classes of regular Hessenberg varieties were first identified as specializations of certain double Schubert polynomials~\cite{Anderson-Tymoczko, Insko-Tymoczko-Woo}. Even more recently, Nadeau and Tewari~\cite{NadeauTewari2021} gave a combinatorial formula expressing each as a sum of Schubert polynomials.
Here we consider certain cases in which $\x$ is not regular.

Let us state a more specific version of Question (C).  The cohomology classes associated with the Schubert varieties $X_w$ ($w \in \S_n$) form a basis for $H^\ast(\B;\Z)$.  Let $I$ be the ideal in $R:=\Z[x_1,\ldots,x_n]$ generated by constant-free symmetric polynomials.  There is an isomorphism $\phi$ from $H^\ast(\B;\Z)$ to $R/I$ mapping the class associated to $X_w$ to the Schubert polynomial $\mf{S}_w$. (This presentation of $H^\ast(\B;\Z)$ is due to Borel; see \cite{Borel1953} or \cite{Manivel}.)  Given any irreducible subvariety ${\mathcal V}$ of $\B$, one can ask how to expand the image $\mf{S}({\mathcal V})$ under $\phi$ of the class associated to ${\mathcal V}$ as a linear combination of Schubert polynomials.  We obtain the following result for the collection of irreducible Hessenberg varieties introduced in the statement of Theorem~\ref{thm1.intro}.

\begin{THM}[See Corollary~\ref{cor.Hess.class}] \label{thm2.intro} 
Let $X:=\Hess(\x_{p,q},\m)$ be an irreducible Hessenberg variety indexed by a $231$-free permutation $w\in \S_q$. 
A polynomial representative of the class $\mathfrak{S}(X)$ of $\Hess(\x_{p,q},\m)$ in the integral cohomology ring of the flag variety is given by the following sum of Schubert polynomials 
    \begin{align*}
    \mathfrak{S}(X)=\sum_{(u,v)} \mf{S}_{uw_0v^{-1}w_0}, 
     \end{align*}
     where the sum is taken over all pairs $(u,v)\in \S_q\times \S_q$ such that $wy_0 = uv$ and $\ell(wy_0) = \ell(u)+\ell(v)$.
\end{THM}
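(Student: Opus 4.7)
The proof plan has two stages. First, Theorem~\ref{thm1.intro} identifies the irreducible variety $X = \Hess(\x_{p,q},\m)$ with $\overline{Q_w}$, the closure of a single $K$-orbit indexed by the $231$-free permutation $w \in \S_q$. Hence computing the cohomology class of $X$ reduces to computing the class of the $K$-orbit closure $\overline{Q_w}$ in $H^\ast(\B;\Z) \cong R/I$.

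Second, I would establish the formula
$$
\mathfrak{S}(\overline{Q_w}) \;=\; \sum_{(u,v)} \mf{S}_{uw_0 v^{-1} w_0}
$$
by invoking the theorem of Brion on cohomology classes of orbit closures of spherical subgroups (the tool advertised in the abstract). The starting point is to identify the base point of $Q_w$ as the clan or involution $\gamma(w) \in \S_n$ canonically associated to a $231$-free $w$ via Yamamoto's classification, as referenced in the introduction. From there, one exhibits $\overline{Q_w}$ as the image of an explicit proper morphism from a smooth ``resolution'' variety whose $T$-fixed points biject with the length-additive factorizations $wy_0 = uv$, where $y_0$ denotes the longest element of $\S_q$. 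The desired sum then arises as the pushforward of these cell classes; the element $uw_0 v^{-1} w_0$ emerges from how Brion's formula naturally involves opposite Schubert cells, with the involution $v \mapsto w_0 v^{-1} w_0$ encoding the switch between a Schubert variety and its opposite.

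The main obstacle is verifying the explicit combinatorial matching between the general output of Brion's formula and the closed-form expression in the statement. Two points deserve attention. First, the $231$-free hypothesis is essential: for general $K$-orbit closures the Schubert expansion may carry multiplicities, whereas the formula above is manifestly multiplicity-free, so the clean statement depends on $231$-avoidance (the same combinatorial condition that guarantees irreducibility via Theorem~\ref{thm1.intro}). Second, one must check that the pushforward produces exactly one Schubert polynomial per length-additive factorization $uv = wy_0$, with the precise index $uw_0 v^{-1} w_0$ and with coefficient $1$; this is where a careful analysis of the fixed-point contributions or, equivalently, of the Bott--Samelson-type combinatorics of the resolution is required.
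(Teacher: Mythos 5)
Your overall strategy is right: reduce to the $K$-orbit closure via Theorem~\ref{thm1.intro} and then invoke Brion's theorem for $[\overline{\mathcal{O}_{\gamma_w}}]$. You also correctly intuit that the twist $v\mapsto w_0 v^{-1} w_0$ comes from the appearance of $w_0$ in Brion's formula, which writes the class as $\sum_{x\in W(\gamma_w)} [\overline{Bw_0 x B/B}]$. However, there are two significant problems with the way you propose to carry this out.

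First, your claim that ``the $231$-free hypothesis is essential\ldots{} for general $K$-orbit closures the Schubert expansion may carry multiplicities'' is simply wrong, and it reveals a misreading of where the multiplicity-one phenomenon comes from. Brion's general theorem gives coefficients $d(Y,x)$ that are powers of $2$, but the paper's remark following Theorem~\ref{thm.Brion} records that for $K=GL_p\times GL_q$ these degrees are \emph{all} equal to $1$, by work of Vust. Consequently Proposition~\ref{P:Schubertforgammaw} produces a multiplicity-free Schubert expansion for the orbit closure $\overline{\mathcal{O}_{\gamma_w}}$ for \emph{every} $w\in\S_q$, not only the $231$-free ones. The role of the $231$-free condition is entirely different: it is the criterion, established in Section~\ref{S:Irreducible}, for $\overline{\mathcal{O}_{\gamma_w}}$ to actually be a Hessenberg variety $\Hess(\x_{p,q},\m)$. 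Conflating these two uses of the hypothesis would lead you to look for a pattern-avoidance argument where none is needed.

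Second, your proposed mechanism---building a smooth resolution whose $T$-fixed points biject with length-additive factorizations and computing by equivariant pushforward---is not what the paper does, and as described it is too vague to close the gap. The paper works entirely with the combinatorial definition of the $W$-set as products of edge labels along saturated chains in the weak order. The key structural input you are missing is Theorem~\ref{thm.weak-order-Sq}: the interval $[\gamma_e,\gamma_0]$ in the weak order on clans, with cover relations of types IC1 and IC2 only, is isomorphic to the \emph{two-sided} weak order on $\S_q$, with type IC1 covers labeled by $s_i$ ($i\in[q-1]$) realizing left-weak covers and type IC2 covers labeled by $s_{p+i}$ realizing right-weak covers. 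From this, Theorem~\ref{thm.W-set} reads off $W(\gamma_w)$ directly: a chain from $\gamma_w$ up to $\gamma_0=\gamma_{y_0}$ interleaves a reduced word for $u^{-1}$ (left steps) with a conjugated reduced word for $v^{-1}$ (right steps), where $wy_0=uv$ is length-additive, and the product of edge labels is exactly $u\varphi(v)=uw_0v^{-1}w_0$. Injectivity of $(u,v)\mapsto u\varphi(v)$ is elementary (disjoint supports, Lemma~\ref{lemma.key-map}), and surjectivity is an induction on $\ell(wy_0)$. None of this requires a resolution, localization, or fixed-point analysis. Your proposal therefore has the right target and the right endpoint, but both the source of the coefficient-one statement and the combinatorial engine that produces the index set $\mathcal{S}(wy_0)$ are absent.
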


A key ingredient in our computations for Theorem~\ref{thm2.intro} is the useful notion of the $W$-set associated with a $K$-orbit $\mathcal{O}=KV_\bullet$ in the flag variety. 
Loosely speaking, the $W$-set of $\mathcal{O}$ consists of permutations that are obtained by multiplying the simple reflections that label the edges of certain saturated paths in the weak order on the spherical variety $G/K$; see Section~\ref{ss.weak-order} for more. 
The origins of $W$-sets go back to the influential work of Richardson and Springer in~\cite{RichardsonSpringer}, where the authors initiated a systematic study of the (weak) Bruhat orders on the Borel orbit closures in symmetric varieties.
This development is generalized by Knop to all spherical homogeneous varieties in~\cite{Knop1995}. 
Brion's work~\cite{Brion2001} has brought to light a multitude of fascinating applications of $W$-sets to the geometry of $K$-orbits. 
In particular, Brion used $W$-sets to describe certain deformations of $K$-orbits in flag varieties to the unions of Schubert varieties; the results of Theorem~\ref{thm2.intro} rest heavily on this work.
More recently, combinatorialists have used $W$-sets to develop Schubert calculus for (classical) symmetric spaces. 
There is currently a fast-growing literature on this subject~\cite{WyserYong2014, WyserYong2017, HMP2018, HMP2021, HMP2022}.

It follows directly from Theorem \ref{thm2.intro} that if $\Hess(\x_{p,q}, \m)$ is irreducible, then the polynomial $\mf{S}(\Hess(\x_{p,q},\m))$ is a $0-1$ sum of Schubert polynomials. In other words, when we express $\mf{S}(\Hess(\x_{p,q},\m))$ as a linear combination of Schubert polynomials, all coefficients lie in $\{0,1\}$. Whenever a polynomial is a $0-1$ sum of Schubert polynomials, we say that the sum is {\em multiplicity-free}. Something stronger is true.  For $i \in [n-1]$, we write $s_i$ for the transposition $(i,i+1) \in \S_n$.

\begin{THM}[See Theorem \ref{thm.monkmultfree} below] \label{thm3.intro}
If $i \in [n-1]$ and $\Hess(\x_{p,q}, \m)$ is irreducible, then the product $\mf{S}_{s_i}\mf{S}(\Hess(\x_{p,q},\m))$ is a multiplicity-free sum of Schubert polynomials.
\end{THM}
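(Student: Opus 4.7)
The plan is to combine Theorem~\ref{thm2.intro} with Monk's formula and then verify by a direct combinatorial argument that no Schubert polynomial appears with coefficient exceeding $1$ in the resulting expansion. Monk's formula asserts
\begin{align*}
\mf{S}_{s_i} \cdot \mf{S}_\sigma \;=\; \sum_{(j,k)} \mf{S}_{\sigma \cdot t_{jk}},
\end{align*}
where the sum ranges over transpositions $t_{jk}$ with $j \leq i < k$ and $\ell(\sigma t_{jk}) = \ell(\sigma) + 1$. Substituting the expansion from Theorem~\ref{thm2.intro} gives
\begin{align*}
\mf{S}_{s_i} \cdot \mf{S}(\Hess(\x_{p,q},\m)) \;=\; \sum_{(u,v)} \sum_{(j,k)} \mf{S}_{uw_0 v^{-1} w_0 \cdot t_{jk}},
\end{align*}
with the outer sum over pairs $(u,v) \in \S_q \times \S_q$ satisfying $wy_0 = uv$ and $\ell(u) + \ell(v) = \ell(wy_0)$. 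Theorem~\ref{thm3.intro} will follow once I establish that the assignment $((u,v),(j,k)) \mapsto uw_0 v^{-1} w_0 t_{jk}$ is injective on the set of valid data.

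I would next record structural properties of the support $\Sigma := \{uw_0 v^{-1} w_0 : (u,v) \text{ valid}\}$ of $\mf{S}(\Hess(\x_{p,q},\m))$. By Theorem~\ref{thm2.intro} the set $\Sigma$ has no repetition, and the parameterizing pairs $(u,v)$ are exactly the reduced ordered factorizations of $wy_0$; this realizes $\Sigma$ as a subset of $\S_n$ in natural bijection with the $W$-set of the $K$-orbit corresponding to the $231$-free permutation $w \in \S_q$. Because $w$ is $231$-avoiding, $wy_0$ has a highly constrained reduced-word structure (essentially a product of blocks governed by the descent composition of $w$), and this rigidity propagates to $\Sigma$ via the map $(u,v) \mapsto uw_0 v^{-1} w_0$. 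I expect these constraints should allow one to read off the Rothe diagram of each element of $\Sigma$ directly from the pair $(u,v)$, which is the bookkeeping needed to compare covers.

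The principal obstacle is ruling out a coincidence $\sigma \cdot t_{jk} = \sigma' \cdot t_{j'k'}$ with $\sigma \ne \sigma'$ both in $\Sigma$ and $j \leq i < k$, $j' \leq i < k'$. Such an equality places $\sigma$ and $\sigma'$ in a rank-two Bruhat interval sharing a common upper cover, and the classical classification of such intervals sharply restricts the pair $(\sigma,\sigma')$. I would argue, using the rigidity of $\Sigma$ inherited from $231$-avoidance and the position constraint $j \leq i < k$ imposed by Monk's formula, that no such coincidence can occur. The anticipated route is induction on $\ell(w)$: peeling a rightmost simple reflection from a chosen reduced word of $wy_0$ reduces the claim for $w$ to the claim for a shorter $231$-free permutation, together with a local verification about how a single new cover can appear in the Monk expansion. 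The base case $w = e$ collapses to Monk's formula applied to a single Schubert class, which is manifestly multiplicity-free, so the heart of the work is managing the bijective bookkeeping in the inductive step.
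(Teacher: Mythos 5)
Your setup is correct: you reduce to proving that the assignment $((u,v),(j,k)) \mapsto u\,w_0 v^{-1} w_0\,t_{jk}$ is injective over valid data, which is exactly the statement the paper proves. But your proposed route to that injectivity has a genuine gap and points in the wrong direction.

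The key ingredient you are missing is not $231$-avoidance rigidity but the \emph{block structure} of the permutations $x = u\,w_0 v^{-1} w_0 = u\varphi(v)$. Since $u \in \langle s_1,\dots,s_{q-1}\rangle$ and $\varphi(v) \in \langle s_{p+1},\dots,s_{n-1}\rangle$ have disjoint supports (and $q \le p$), every such $x$ permutes $[q]$ among itself, fixes $\{q+1,\dots,p\}$ pointwise, and permutes $[n]\setminus[p]$ among itself. This rigid three-block structure is what the paper exploits: a case analysis on where $j,k,j',k'$ fall relative to the breakpoints $q$ and $p$ forces either $j=j'$ and $k=k'$ (giving $x_1 = x_2$ and then, by the injectivity of $(u,v)\mapsto u\varphi(v)$, the conclusion), or, when both transpositions act inside a single block, equality of supports of the untouched factor, again forcing $(u_1,v_1)=(u_2,v_2)$. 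In particular, the theorem holds for \emph{all} $w\in\S_q$, not only $231$-free ones, so your appeal to the ``highly constrained reduced-word structure'' coming from $231$-avoidance is a red herring: the relevant rigidity is built into the factorization $u\cdot\varphi(v)$, not into $w$ itself.

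Beyond that mislocated source of rigidity, the specific tools you reach for are not going to close the gap. The classification of rank-two Bruhat intervals concerns pairs of elements with a common cover \emph{and} a common lower bound; here two distinct $\sigma,\sigma'\in\Sigma$ have the same length and a common upper cover, which is not the rank-two-interval situation, so that classification does not directly constrain $(\sigma,\sigma')$. The Rothe-diagram bookkeeping and the induction on $\ell(w)$ are plausible-sounding but you give no mechanism by which peeling a simple reflection off $wy_0$ interacts coherently with the pair decomposition and the $(j,k)$ constraint $j\le i<k$; I do not see how the inductive step would be carried out. The block-structure argument is both shorter and actually available from the material established earlier in the paper (Lemmas~\ref{lem.phi.Sq} and~\ref{lemma.key-map}), and you should use it instead.
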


Theorem \ref{thm3.intro}, which is a consequence of Theorem \ref{thm2.intro} and Monk's formula, gives insight into how $\Hess(\x_{p,q},\m)$ intersects certain Schubert varieties of codimension one in $\B$.

Geometrically speaking, at the cycle level, the classical Monk's formula (\cite[Theorem 3]{Monk1959}) says that the intersection of a Schubert variety $X\subseteq G/B$ with a Schubert divisor $Z \subset G/B$ is a multiplicity-free sum of Schubert divisors of $X$. Although $\Hess(\x_{p,q},\m)$ has a flat degeneration to a union $Y$ of (many) Schubert varieties, it is not a $B$-stable subvariety of $G/B$.
In light of this fact, we find it rather surprising that the cohomology class of the intersection of $\Hess(\x_{p,q},\m)$ with $Z$ is a $0-1$ sum of the classes of Schubert divisors in $Y$. It is unknown to us that if this multiplicity-free phenomenon persists in all cases of the intersection between $Z$ and any $K$-orbit closure or any irreducible semisimple Hessenberg variety in the flag variety.

It is natural to ask whether the methods used here and illustrated in Example \ref{ex.23n} are more widely applicable.  
The key idea is that if $\Hess(\x,\m)$ is invariant under the action of a spherical group $H$, then known combinatorial descriptions of $H$-orbits allow for a detailed analysis of $\Hess(\x,\m)$ that is difficult to carry out for arbitrary Hessenberg varieties. 
If a spherical subgroup $H$ of $G$ centralizes $\x$ (up to multiplication by a scalar) then $H$ indeed acts on $\Hess(\x,\m)$ for all $\m$.  However, this situation is rare.  If $\x$ is semisimple, then $C_G(\x)$ is reductive.  The reductive spherical subgroups of $G$ are known (see~\cite{Kramer1979},\cite{Brion1987},\cite{Mikityuk1986}).  These are the centralizers of the matrices $\x_{p,q}$ studied herein along with the classical groups that act irreducibly on $\C^n$.  In the second case, the centralizer of every such classical group consists of the scalar matrices, and if $\x$ is scalar then $\Hess(\x,\m)=\B$ for all $\m$.  There are nilpotent matrices other than conjugates of $E_{1n}$ with spherical centralizers in $G$, but these are also rare. The automorphism group of $\Hess(\x,\m)$ can be much larger than $C_G(\x)$, but it seems challenging to give a comprehensive and useful analysis of this phenomenon.  On the other hand, in \cite{demariprocesishayman}, De Mari, Procesi, and Shayman define Hessenberg varieties for arbitrary reductive groups. In Lie types other than $A$ there are additional examples of reductive spherical subgroups centralizing nonscalar elements. 
We will examine these examples in future work.

The content of the rest of the paper is as follows. After reviewing the requisite results in Section \ref{S:Notation}, we prove Theorem \ref{thm1.intro} and Corollary \ref{cor.intro} in Section~\ref{S:Irreducible}. The proofs of Theorems \ref{thm2.intro} and \ref{thm3.intro} are the subject of Section~\ref{S:Wsets}.

\

\noindent\textbf{Acknowledgements.}
The first author is partially supported by a grant from the Louisiana Board of Regents (contract no. LEQSF(2021-22)-ENH-DE-26).
The second author is partially supported by NSF Grant DMS 1954001.

\section{Notation and preliminaries}{\label{S:Notation}}

We review here various results and definitions that we will use below. We denote by $\Z_+$ the set of positive integers.
Let $n\in \Z_+$. Let $G=GL_n(\C)$ and let $B \leq G$ be the Borel subgroup consisting of upper triangular matrices.  The flag variety $G/B$ will be denoted by $\B$.  We identify each coset $gB \in \B$ with the flag
$$
V_\bullet=0<V_1<\ldots <V_{n-1}<V_n=\C^n
$$
in which each $V_i$ is spanned by the first $i$ columns of $g$.

Denote the symmetric group on $[n]$ by $\S_n$. Let $p$ and $q$ be positive integers such that $n=p+q$. We frequently consider the smaller symmetric group $\S_q$ below, which we identify with the subgroup of $\S_n$ stabilizing $[n] \setminus [q]$ pointwise.  For $i \in [n-1]$, we write $s_i$ for the simple reflection $(i,i+1) \in \S_n$.  A {\it reduced word} for $w \in \S_n$ is any shortest possible representation
$$
w=s_{i_1}s_{i_2} \ldots s_{i_\ell}
$$
of $w$ as a product of simple reflections.  We call the set of simple transpositions that appear in any reduced expression of $w$ the \textit{support of $w$} and denote it by $\Supp(w)$. For example, $\Supp(2143) = \Supp(s_1s_3) = \{s_1, s_3\}$.

The {\it length} $\ell(w)$ of $w\in \S_n$ is the number of simple reflections appearing in any reduced word for $w$. It is well known that $$\ell(w)=\mid\{i<j \mid 1\leq i<j \leq n,\, w(i)>w(j)\}\mid$$
for all $w\in \S_n$.
The longest elements of both $\S_n$ and $\S_q$ play a role below; to avoid confusion, we write $w_0$ for the longest element of $\S_n$ and $y_0$ for the longest element of $\S_q$.

We say that $w \in \S_n$ {\it avoids $312$} (or is {\it $312$-free}) if there do not exist $1 \leq i<j<k \leq n$ such that $w(j)<w(k)<w(i)$ and define avoidance of $231$ similarly. It is straightforward to show that $w$ avoids $231$ if and only if $w^{-1}$ avoids $312$.

\subsection{Hessenberg varieties}

A \textit{Hessenberg vector} is a weakly increasing sequence
$$
\m=(m_1,\ldots,m_n)
$$
of integers satisfying $i \leq m_i \leq n$ for each $i \in [n]$.  Given a matrix $\x\in \mathfrak{g}:=\mathfrak{gl}_n(\C)$ and Hessenberg vector $\m$ we define the corresponding \textit{Hessenberg variety} by
$$
\Hess(\x,\m) :=\{V_\bullet \in \B\mid \x V_i \leq V_{m_i} \mbox{ for all } i \in [n]\}.
$$
Given a Hessenberg vector $\m$ we define $\pi_\m$ to be the lattice path from the upper left corner to the lower right corner of an $n \times n$ grid in which the vertical step in row $i$ occurs in column $m_i$.  Since $m_i \geq i$, $\pi_\m$ is a \textit{Dyck path}, that is, the lattice path $\pi_\m$ never crosses the diagonal connecting the two corners.  We write $\area(\pi_\m)$ for the number of squares in the grid that lie below $\pi_m$ and strictly above the diagonal and observe that
$$
\area(\pi_\m)=\sum_{i=1}^n(m_i-i).
$$
Herein we examine Hessenberg varieties $\Hess(\x_{p,q},\m)$ where $\x_{\p,q} \in \mathfrak{g}$ is semisimple with exactly two distinct eigenvalues, one of multiplicity $p$ and one of multiplicity $q$ (so $p+q=n$).  Since $\Hess(g^{-1}\x g,\m)=g\Hess(\x,m)$ for all $g \in G$ and all $\x \in \mathfrak{g}$, we assume without loss of generality that
\begin{align}\label{A:semisimple}
\x_{p,q} = \text{diag}(\underbrace{\lambda_1,\dots, \lambda_1}_{\text{$p$ times}},
\underbrace{\lambda_2,\dots, \lambda_2}_{\text{$q$ times}}),
\end{align}
for distinct $\lambda_1,\lambda_2 \in \C$. 

The centralizer of $\x_{p,q}$ in $G$ is the subgroup $K \cong GL_p(\C)\times GL_q(\C)$ consisting of all $g=(g_{ij}) \in G$ such that $g_{ij}=0$ if either $i \leq p<j$ or $j \leq p<i$. It is staightforward to confirm that if $V_\bullet \in \Hess(\x_{p,q},\m)$ and $g \in K$, then
$$gV_\bullet:=0<gV_1<\ldots <gV_{n-1}<\C^n \in \Hess(\x_{p,q},\m).$$
Thus $\Hess(\x_{p,q},\m)$ is a union of $K$-orbits on $\B$.

\subsection{$K$-orbits on the flag variety}
The group $K$ is known to have finitely many orbits on the flag variety $\B$. 
These orbits are parameterized by combinatorial objects called clans.   Clans originated in work of Matsuki and \={O}shima~\cite{Matsuki-Oshima} 
to parameterize symmetric subgroup orbits on complex flag manifolds of classical type. Their notation has morphed with developments through
subsequent works, notably by Yamamoto~\cite{Yamamoto} and then Wyser~\cite{Wyser}.

We define the set of clans as follows. Consider the set of all sequences
$$
\gamma=c_1c_2\cdots c_n
$$
such that
\begin{enumerate}
\item each $c_i$ lies in $\{+,-\} \cup \Z_+$,
\item each element of $\Z_+$ appearing in $\gamma$ appears exactly twice, and
\item if $+$ and $-$ appear, respectively, exactly $s$ times and $t$ times in $\gamma$, then $s-t=p-q$. 
\end{enumerate}
We define an equivalence relation on this set by identifying sequences $\gamma=c_1\ldots c_n$ and $\delta=d_1\ldots d_n$ if
\begin{itemize}
    \item $d_i=d_j \in \Z_+$ whenever $c_i=c_j \in \Z_+$, and
    \item $d_i=c_i$ whenever $c_i \in \{+,-\}.$
\end{itemize}
A {\it $(p,q)$-clan} (or {\it clan} if $p,q$ are fixed) is an equivalence class of this relation.  We identify a clan with its unique representative $\gamma$ satisfying
\begin{itemize}
    \item if $j>1 \in \Z_+$ appears in $\gamma$ then $j-1$ appears in $\gamma$ and the first occurrence of $j-1$ is to the left of the first occurrence of $j$,
\end{itemize}
and write $\Clan_{p,q}$ for the set of all such representatives.  So, for example, $5++3-+35+$ and $1++2-+21+$ lie in the same $(6,3)$-clan and the second of these is our fixed representative for the equivalence class.  In general, if $\gamma \in \Clan_{p,q}$ then there is some $\ell \in \Z_{\geq 0}$ such that the integers appearing in $\gamma$ are exactly those in $[\ell]$, and if $s$ entries of $\gamma$ are plus signs and $t$ entries are minus signs, then $p=\ell+s$ and $q=\ell+t$.

A flag $V(\gamma)_\bullet$ in $\B$ is associated with each clan $\gamma$ in the next definition.

\begin{definition}\label{def.flag.rep} Let $e_1,\ldots,e_n$ be the standard basis for $\C^n$. Given $(p,q)$-clan $\gamma=c_1\ldots c_n$, define $v_1,\ldots,v_n \in \C^n$ as follows.
\begin{itemize}
\item If $c_i$ is the $k^{th}$ occurrence of $+$ in $\gamma$ and exactly $\ell$ elements of $[q]$ have appeared twice among $c_1, \ldots, c_{i-1}$, set $v_i=e_{k+\ell}$.
\item If $c_i$ is the $k^{th}$ occurrence of $-$ in $\gamma$ and exactly $\ell$ elements of $[q]$ have appeared twice among $c_1, \ldots, c_{i-1}$, set $v_i=e_{p+k+\ell}$.
\item Say $c_i=c_j=k \in [q]$ for some $i<j$, with exactly $r$ occurrences of $+$ appearing in $c_1 \cdots c_{i-1}$, exactly $s$ occurrences of $-$ appearing in $c_1 \cdots c_{j-1}$, and exactly $u$ elements of $[q]$ appearing twice in $c_1\cdots c_{j}$.  Then set $v_i=e_{k+r}+e_{p+s+u}$ and $v_j=e_{k+r}-e_{p+s+u}$.
\end{itemize}
For $i \in [n]$, set $$V(\gamma)_i:=\C\{v_j \mid j \leq i\}$$ and define $$V(\gamma)_\bullet:=0<V(\gamma)_1< \ldots < V(\gamma)_{n-1}<\C^n \in \B.$$
\end{definition}

We observe that, for arbitrary $\gamma$, each vector $v_i$ used to construct $V(\gamma)_\bullet$ is either a standard basis vector or of the form $e_r \pm e_s$ with $r \in [p]$ and $p<s \leq n$.

\begin{example} \label{ex.flag}
Say $p=5$, $q=3$, and $\gamma=+1+-2+21$.  Then $v_1=e_1$, $v_2=e_2+e_8$, $v_3=e_3$, $v_4=e_6$, $v_5=e_4+e_7$, $v_6=e_5$, $v_7=e_4-e_7$, and $v_8=e_2-e_8$.
\end{example}

\begin{definition} \label{def.k.orbit.clan}
Given a $(p,q)$-clan $\gamma$, we set
$$\O_\gamma:=KV(\gamma)_\bullet,$$
so $\O_\gamma$ is the $K$-orbit on $\B$ containing $V(\gamma)_\bullet$.
\end{definition}

\begin{lemma}[Matsuki--\={O}shima] \label{lemma.flag.rep}
Each $K$-orbit on $\B$ contains a unique flag $V(\gamma)_\bullet$, therefore each $K$-orbit on $\B$ is of the form $\O_\gamma$ for some $\gamma\in \Clan_{p,q}$. Furthermore, $\O_\gamma=\O_\delta$ for $\gamma,\delta\in \Clan_{p,q}$ if and only if $\gamma=\delta$.
\end{lemma}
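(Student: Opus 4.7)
The plan is to prove both existence and uniqueness by pairing each $K$-orbit with a complete set of numerical invariants, then showing these invariants exactly reproduce the combinatorial data of a clan.

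Since $K$ preserves the decomposition $\C^n = Y \oplus Z$ with $Y = \C\{e_1,\ldots,e_p\}$ and $Z = \C\{e_{p+1},\ldots,e_n\}$, the integers
\begin{equation*}
a_i(V_\bullet) := \dim(V_i \cap Y), \quad b_i(V_\bullet) := \dim(V_i \cap Z), \quad r_{ij}(V_\bullet) := \dim\bigl(V_i \cap (V_j + Y)\bigr),
\end{equation*}
defined for $1 \leq i \leq j \leq n$, are $K$-invariants of $V_\bullet \in \B$. For the uniqueness statement, I would read these numbers directly off the flag $V(\gamma)_\bullet$ attached to a clan $\gamma \in \Clan_{p,q}$ using Definition~\ref{def.flag.rep}, and observe that they determine $\gamma$ completely: the $+$ (respectively $-$) entries are detected at the steps where $a_i > a_{i-1}$ (respectively $b_i > b_{i-1}$) but not both, while each matched integer pair $c_i = c_j = k$ is detected by the unique pair of steps consisting of an opening (where neither $a$ nor $b$ grows) and a closing (where both $a$ and $b$ grow), with the coupling between opener and closer determined by the rank data $r_{ij}$. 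Since integer labels in $\Clan_{p,q}$ are normalized to appear in order of first occurrence, the representative $\gamma$ is then uniquely determined, and the second assertion of the lemma follows.

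For existence, given an arbitrary flag $V_\bullet \in \B$, I would inductively construct a basis $v_1,\ldots,v_n$ satisfying $V_i = \C\{v_1,\ldots,v_i\}$ and falling into one of the three shapes listed in Definition~\ref{def.flag.rep}, after replacing $V_\bullet$ by $kV_\bullet$ for a suitable $k \in K$. At step $i$, the codimension-one inclusion $V_{i-1} \subset V_i$ falls into one of four cases depending on which of $V_i \cap Y$ and $V_i \cap Z$ grow; the two factors $GL_p$ and $GL_q$ of $K$ provide enough freedom to normalize the newly added vector (or, in the closing case, the pair of mixed vectors) into the prescribed standard-basis form. The resulting word $c_1 c_2 \cdots c_n$ is then a legitimate clan in $\Clan_{p,q}$, and $k$ sends $V_\bullet$ into $V(\gamma)_\bullet$, proving the first assertion.

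The main obstacle will be the existence step, and specifically the closing case: one must verify that the residual $K$-freedom after the first $i-1$ steps is rich enough to simultaneously normalize $v_i$ and its partner $v_j$ into the symmetric form $e_{k+r} \pm e_{p+s+u}$, without disturbing the vectors already placed. This reduces to a careful accounting of which rows and columns of $GL_p$ and $GL_q$ remain available after previous normalizations, and is the technical heart of the Matsuki--\={O}shima classification~\cite{Matsuki-Oshima} as later reformulated by Yamamoto~\cite{Yamamoto}; I would cite these sources for the detailed bookkeeping rather than reproduce it in full.
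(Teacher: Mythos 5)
The paper does not prove this lemma; it cites it to Matsuki--\={O}shima, so there is no internal proof to compare against. Your overall strategy of completing a list of $K$-invariant ranks and then normalizing an arbitrary flag into the standard form is the right one and mirrors the cited sources.

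The uniqueness half as written, however, has a concrete gap. Your third invariant $r_{ij}(V_\bullet) = \dim\bigl(V_i \cap (V_j + Y)\bigr)$ for $i \leq j$ is identically equal to $i$, because $V_i \subseteq V_j \subseteq V_j + Y$. It therefore carries no information, and the pairing of openers with closers is not actually determined by the invariants you list; the signs and the set of opener and closer positions are recovered from $a_i$ and $b_i$ as you describe, but the matching is not. A rank invariant that does the job is $\dim(\pi_Y(V_i) \cap V_j)$, where $\pi_Y : \C^n \to Y$ is the projection along $Z$: one checks from Definition~\ref{def.flag.rep} that for $i \leq j$,
\[
\gamma(i,j) = \dim \pi_Y(V_i) - \dim\bigl(\pi_Y(V_i) \cap V_j\bigr),
\]
which recovers exactly the arc-counting statistic appearing in Theorem~\ref{thm.inclusion}. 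Note the projection must go along $Z$, not $Y$; even swapping the roles of $i$ and $j$ in your formula, i.e.\ using $\dim\bigl(V_j \cap (V_i + Y)\bigr)$ for $i < j$, gives $i + a_j - a_i$ and is already determined by the $a$-statistics, so it adds nothing. The existence half is a reasonable sketch and, like the paper, defers the normalization bookkeeping to Matsuki--\={O}shima and Yamamoto, which is acceptable; once $r_{ij}$ is replaced by a genuinely informative invariant the uniqueness argument goes through as you outline it.
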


\begin{definition} \label{def.inclusion.order} Given $\gamma, \tau\in \Clan_{p,q}$ we write $\gamma\leq \tau$ whenever $\O_\gamma \subseteq \overline{\O_\tau}$. We call the partial order $\leq$ the \textit{inclusion order} on $\Clan_{p,q}$. 
\end{definition}

We now present a result of Wyser~\cite{Wyser} characterizing the inclusion order.
Given a clan $\gamma=c_1c_2\cdots c_n$, we define
\begin{enumerate}
\item $\gamma(i;+)$ to be the  total number of plus signs and pairs of equal natural numbers occurring among $c_1\cdots c_i$,
\item $\gamma(i;-)$ to be the  total number of minus signs and pairs of equal natural numbers occurring among $c_1\cdots c_i$, and
\item $\gamma(i,j)$ to be the number of pairs of equal numbers $c_s=c_t\in \Z_+$ with $s\leq i<j<t$.
\end{enumerate}

\begin{example} \label{e2}
If $\gamma=+1+-2+21$ as in Example \ref{ex.flag} above, then
$$
(\gamma(i;+))_{i=1}^n=(1,1,2,2,2,3,4,5),
$$
$$
(\gamma(i;-))_{i=1}^n=(0,0,0,1,1,1,2,3),
$$
and
$$
(\gamma(i,j))_{i,j=1}^n=\left( \begin{array}{cccccccc} 0 & 0 & 0 & 0 & 0 & 0 & 0 & 0 \\ 0 & 0 & 1 & 1 & 1 & 1 & 1 & 0 \\ 0 & 0 & 0 & 1 & 1 & 1 & 1 & 0 \\ 0 & 0 & 0 & 0 & 1 & 1 & 1 & 0 \\ 0 & 0 & 0 & 0 & 0 & 2 & 1 & 0 \\ 0 & 0 & 0 & 0 & 0 & 0 & 1 & 0 \\ 0 & 0 & 0 & 0 & 0 & 0 & 0 & 0 \\ 0 & 0 & 0 & 0 & 0 & 0 & 0 & 0 \end{array} \right).
$$ 
\end{example}

\begin{theorem}[Wyser] \label{thm.inclusion} Let $\gamma$ and $\tau$ be $(p,q)$-clans.
Then $\gamma \leq \tau$ if and only if all three inequalities
\begin{enumerate}
\item $\gamma(i;+) \geq \tau(i;+)$,
\item $\gamma(i;-) \geq \tau(i; -)$, and
\item $\gamma(i,j) \leq \tau(i,j)$
\end{enumerate}
hold for all $1 \leq i<j \leq n$.
\end{theorem}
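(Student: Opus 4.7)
My plan is to prove the theorem by interpreting the three combinatorial statistics as dimensions of $K$-invariant rank functions on the flag variety, and then to combine semicontinuity arguments with an inductive analysis of the covering relations on $\Clan_{p,q}$. Setting $Y := \mathrm{span}(e_1, \ldots, e_p)$ and $Z := \mathrm{span}(e_{p+1}, \ldots, e_n)$ to be the two eigenspaces of $\x_{p,q}$, the first step is to establish, for any flag $V_\bullet \in \O_\gamma$ and any $1 \leq i < j \leq n$, the identifications
\begin{align*}
\gamma(i;+) &= \dim(V_i \cap Y), \\
\gamma(i;-) &= \dim(V_i \cap Z), \\
\gamma(i,j) &= \dim V_i - \dim\bigl(V_i \cap (V_j \cap Y + V_j \cap Z)\bigr).
\end{align*}
By $K$-invariance of these rank functions and Lemma~\ref{lemma.flag.rep}, it suffices to verify the equalities on the representative flag $V(\gamma)_\bullet$. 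This is a direct computation using Definition~\ref{def.flag.rep}: a $+$ (resp.\ $-$) at position $\leq i$ contributes a standard basis vector to $V_i \cap Y$ (resp.\ $V_i \cap Z$); a closed pair with both endpoints $\leq i$ contributes exactly one dimension to each of $V_i \cap Y$ and $V_i \cap Z$; and an open pair $v_s = e_\alpha + e_\beta$ with $s \leq i < t$ contributes a vector in $V_i$ whose $Y$- and $Z$-components $e_\alpha, e_\beta$ individually lie in $V_j$ precisely when the partner endpoint $t$ satisfies $t \leq j$. A short linear-independence argument using the basis $\{v_1,\ldots,v_n\}$ then shows that the pairs counted by $\gamma(i,j)$ correspond exactly to the codimension in $V_i$ of its intersection with $V_j \cap Y + V_j \cap Z$.

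Given these geometric interpretations, the forward direction follows immediately from semicontinuity of rank functions. For fixed subspaces $W_1, W_2 \subset \C^n$, the maps $V_\bullet \mapsto \dim(V_i \cap W_1)$ and $V_\bullet \mapsto \dim\bigl(V_i \cap (V_j \cap W_1 + V_j \cap W_2)\bigr)$ are upper semicontinuous on $\B$. Hence if $V_\bullet \in \O_\gamma \subseteq \overline{\O_\tau}$, then $\dim(V_i \cap Y)$ and $\dim(V_i \cap Z)$ can only have grown under specialization, giving $\gamma(i;\pm) \geq \tau(i;\pm)$, while $\gamma(i,j) = \dim V_i - \dim\bigl(V_i \cap (V_j \cap Y + V_j \cap Z)\bigr)$ can only have decreased, yielding $\gamma(i,j) \leq \tau(i,j)$.

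For the converse, I would induct on $\dim \O_\tau - \dim \O_\gamma$, reducing to the case where $\tau$ covers $\gamma$ in the inclusion order. The key combinatorial input is the explicit list of elementary covering moves on $\Clan_{p,q}$, available from the work of Matsuki--\={O}shima, Yamamoto, and Wyser: swaps of adjacent entries of specified types (e.g., $+- \leftrightarrow -+$ or $\pm k \leftrightarrow k\pm$), creation or destruction of matched pairs, and uncrossing of crossed or nested pairs. For each move one verifies directly that the three families of statistics change by exactly the predicted amount, so covering relations correspond to minimal updates of the rank inequalities. Conversely, given $\gamma \neq \tau$ satisfying all the inequalities, one identifies a position at which the inequalities are strict and applies an appropriate local modification to $\tau$ to produce an intermediate clan $\tau' < \tau$ whose statistics still dominate those of $\gamma$; induction then completes the proof.

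The principal obstacle will be the third inequality, $\gamma(i,j) \leq \tau(i,j)$, since it depends on a pair of indices and its behavior under elementary clan modifications is considerably more delicate than that of $\gamma(i;\pm)$. In particular, uncrossing two nested pairs or trading a sign with the endpoint of a pair can alter $\gamma(i,j)$ over an entire rectangle of index pairs, complicating the verification that the constructed move $\tau \to \tau'$ preserves all the relevant inequalities. An alternative approach, pursued in Wyser's original proof, is to bypass this global induction by constructing explicit one-parameter families of flags in $\O_\tau$ that degenerate into $\O_\gamma$ whenever the rank inequalities permit; this reduces the converse to a local linear-algebraic check for each cover type and makes the geometric content of the theorem more transparent.
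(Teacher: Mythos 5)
The paper does not prove this theorem; it is cited directly as a result of Wyser~\cite{Wyser}, so there is no in-paper argument to compare yours against. Evaluating your sketch on its own terms:

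Your geometric interpretations of the three statistics are correct (one can verify them on the representative $V(\gamma)_\bullet$ as you indicate, and they are $K$-invariant). However, your semicontinuity argument for the third statistic has a gap as written. You claim that $V_\bullet \mapsto \dim\bigl(V_i \cap (V_j\cap Y + V_j\cap Z)\bigr)$ is upper semicontinuous, but this is not immediate from general principles because the subspace $V_j\cap Y + V_j\cap Z$ does not have locally constant dimension as $V_\bullet$ varies (its dimension $\gamma(j;+)+\gamma(j;-)$ itself jumps), so the usual ``intersection of constant-rank families is upper semicontinuous'' argument does not apply directly. The fix is to reformulate: since $Y\cap Z = 0$ and $V_i\subseteq V_j$, one has $V_i\cap(V_j\cap Y + V_j\cap Z) = \{v\in V_i : \pi_Y(v)\in V_j\}$, which is exactly the kernel of the composite linear map $V_i \xrightarrow{\pi_Y} \C^n \twoheadrightarrow \C^n/V_j$. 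Both $V_i$ and $\C^n/V_j$ have constant dimension, so the rank of this composite is lower semicontinuous, giving $\gamma(i,j) = i - \dim\ker$ lower semicontinuous as required. You should make this explicit; without it, the semicontinuity claim for $\gamma(i,j)$ is an assertion, not an argument.

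The converse direction in your proposal is only a plan, not a proof. You yourself flag the principal difficulty (the behavior of $\gamma(i,j)$ under the elementary moves) and then defer to ``Wyser's original proof'' without supplying the argument. As it stands, neither of your proposed routes for the converse --- the induction on covering relations, nor the explicit one-parameter degenerations --- is carried out, and the degeneration route in particular requires constructing, for each clan $\tau$ with $\gamma < \tau$, a curve of flags in $\O_\tau$ limiting into $\O_\gamma$. That construction is the real content of the converse and is missing. If you want a self-contained proof rather than a citation, this is the part that must be written; otherwise it is cleaner (and what the paper does) to simply cite Wyser.
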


The unique maximum element of $\Clan_{p,q}$ in the inclusion order is $$\gamma_0:= 12\cdots q + \cdots + q \cdots 21.$$ (There are $p-q$ plus signs appearing in $\gamma_0$.) The $K$-orbit $\O_{\gamma_0}$ is open and dense in $\B$.

\begin{example}\label{ex.max.clan} We have
\[
\left(\gamma_0(i;+)\right)_{i=1}^n = (\underbrace{0,\ldots, 0}_{\textup{$q$ times}},1,2,\ldots, p),
\]
\[
\left(\gamma_0(i;-)\right)_{i=1}^n = (\underbrace{0,\ldots, 0}_{\textup{$p$ times}},1,2,\ldots, q),
\]
and
\begin{eqnarray*}\label{eqn.gamma0value1}
\gamma_0(i, j)=\left\{\begin{array}{ll}i & \textup { if } i \in [q], j \in [p], \\ 
q & \textup { if } i, j \in [p],\\
p+q-j & \textup{ if } i \in\{q+1, \ldots n\},  j \in\{p+1, \ldots, n\}. \\
\end{array} \quad\right. 
\end{eqnarray*}
Finally, if $i\in [q]$ and $j\in \{p+1, \ldots, n\}$ then
\begin{eqnarray*} \label{eqn.gamma0value2}
\gamma_0(i,j) = \min \{n-j, i\}.
\end{eqnarray*}
\end{example}

The following statement, which we record here for use in the next section, follows directly from the definition of the statistic $\gamma(i,j)$.

\begin{lemma}\label{lemma.gamma.dif} Let $\gamma\in \Clan_{p,q}$.
For all $i>1$,  $\gamma(i, j)-\gamma(i-1, j) \in\{0,1\}$ with $\gamma(i, j)-\gamma(i-1, j)=1$ if and only if there exists $t>j$ such that $c_i=c_t$.
 \end{lemma}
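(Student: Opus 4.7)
The plan is to prove the lemma directly from the definition of $\gamma(i,j)$, as no deeper machinery is required. Recall that $\gamma(i,j)$ counts pairs $(s,t)$ with $s \leq i < j < t$ and $c_s=c_t\in\Z_+$. First, I would rewrite the difference $\gamma(i,j) - \gamma(i-1,j)$ by observing that the set counted by $\gamma(i,j)$ decomposes as a disjoint union of the set counted by $\gamma(i-1,j)$ together with those pairs $(s,t)$ for which $s=i$ exactly. Hence
\[
\gamma(i,j) - \gamma(i-1,j) = \bigl|\{t : i < j < t \text{ and } c_i = c_t \in \Z_+\}\bigr|.
\]

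Next, I would invoke the defining property of a clan: each natural number appearing in $\gamma = c_1c_2\cdots c_n$ appears exactly twice. Therefore, given the index $i$, there is at most one index $t \neq i$ with $c_t = c_i$ (and such an index exists precisely when $c_i \in \Z_+$). Consequently the set displayed above has cardinality at most one, which yields the dichotomy $\gamma(i,j) - \gamma(i-1,j) \in \{0,1\}$.

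Finally, the equivalence claim is immediate from the same reformulation: the difference equals $1$ exactly when the unique index $t$ with $c_t = c_i$ satisfies $t > j$, which (since $i < j$) is the condition in the lemma. There is no real obstacle here; the only care required is to note that the condition $c_i = c_t \in \Z_+$ is built into the definition and to make sure the index $t$ is automatically distinct from $i$ (which holds since $t > j > i$).
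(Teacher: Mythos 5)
Your proof is correct and supplies exactly the elementary computation that the paper leaves implicit when it states the lemma ``follows directly from the definition'' of the statistic $\gamma(i,j)$. The decomposition $\gamma(i,j)-\gamma(i-1,j)=\bigl|\{t>j : c_i=c_t\in\Z_+\}\bigr|$, combined with the clan axiom that each natural number appearing in $\gamma$ appears exactly twice, is precisely the intended argument.
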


\subsection{The weak order} \label{ss.weak-order}

We now recall a formula of Brion for the cohomology class of a $K$-orbit closure $\ov{\O_\gamma}$ from~\cite{Brion2001}.  While there is a version of Brion's result for orbits of arbitrary spherical subgroups, we state here the result for the special case of the spherical subgroup $K=GL_p(\C)\times GL_q(\C)$ in $GL_n(\C)$. 

First, we require some terminology. Let $\Delta$ denote the subset of simple roots in the root system of $\mathfrak{gl}_n(\C)$ specified by our choice of Borel subgroup $B$. In particular, we have
\[
\Delta = \{\epsilon_i-\epsilon_{i+1} \mid i\in [n-1]\},
\]
where $\epsilon_i: \mathfrak{gl}_n(\C)\to \C$ is defined by $\epsilon_i(\x)=\x_{i,i}$.
For each $\alpha_i:= \epsilon_i - \epsilon_{i+1}\in \Delta$, let $P_i$ be the minimal parabolic subgroup defined by $P_i:= B \sqcup B s_iB$. Consider the canonical projection map $\pi_i: G/B \to G/P_i$. For each $\gamma\in \Clan_{p,q}$, the pull-back $\ov{\pi_i^{-1}(\pi_i(\O_\gamma))}$ contains a unique dense $K$-orbit, which we denote by $s_i\cdot \O_\gamma$. 
Notice that there might be more than one simple transposition giving the same $K$-orbit. Although this is not essential for the definition of our weak order, it will be important for us to keep track of these different simple transpositions. The \textit{weak order} on the set of $K$-orbits is the transitive closure of the relation defined by 
\begin{align}\label{A:geometriccovers}
\O_\gamma \prec \O_\tau \Leftrightarrow \tau\neq \gamma \textup{ and } \O_\tau = s_i\cdot \O_\gamma \;\text{for some}\; i\in [n-1].
\end{align}
We also write $\gamma \preceq \tau$ to denote the weak order on the set $\Clan_{p,q}$. It is clear that $\gamma\leq \tau$ whenever $\gamma\preceq \tau$. The clan $\gamma_0$ is the unique maximal element of $\Clan_{p,q}$ with respect to both the weak order and inclusion order.

We form an (oriented) graph on the vertex set $\Clan_{p,q}$ with edges $\gamma \to \tau$
whenever \eqref{A:geometriccovers} holds for some $s_i$, $i\in [n-1]$. 
In this case, we label the edge as follows:  
\[
\gamma\xrightarrow{\;s_i\;} \tau. 
\]
As we mentioned before, there can be more than one simple transposition $s_i$ with $i\in [n-1]$ giving the same cover relation in (\ref{A:geometriccovers}). Hence, an edge of our directed graph may possess multiple labels. We will use these labels in Section~\ref{S:Wsets}.

Given a directed path 
\[
P: \gamma=\gamma_1 \xrightarrow{\;s_{i_1}\;} \gamma_2 \xrightarrow{\;s_{i_2}\;} \gamma_3 \cdots \xrightarrow{\;s_{i_\ell}\;} \gamma_{\ell+1} = \gamma_0 
\]
from $\gamma$ to $\gamma_0$ we define $w(P) := s_{i_1}s_{i_2}\cdots s_{i_\ell}\in \S_n$.

\begin{definition} \label{def.Wset} For each $\gamma\in \Clan_{p,q}$,  the \textit{$W$-set} of the $K$-orbit $\O_\gamma$ is 
\[
W(\gamma):= \{w(P) \mid \text{$P$ a labeled directed path from $\gamma$ to $\gamma_0$}\}\subseteq \S_n.
\]
\end{definition}

We can now state Brion's formula \cite[Theorem 6]{Brion2001}.

\begin{theorem}[Brion]\label{thm.Brion}
Let $\gamma\in \Clan_{p,q}$. The $K$-orbit closure $\ov{\O_\gamma}$ has rational singularities and admits a flat degeneration to the reduced subscheme 
\begin{align*}
\bigcup_{w\in W(\gamma) } \overline{Bw_0 w B/B} \subset \B.
\end{align*}
In particular, we have 
\begin{align}\label{eqn.class}
[\ov{\O_\gamma}] = \sum_{w\in W(\gamma) } [\overline{Bw_0 w B/B}]
\end{align}
in the integral cohomology ring of $\B$. 
\end{theorem}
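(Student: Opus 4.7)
The plan is to proceed by induction on the codimension of $\overline{\O_\gamma}$ in $\B$, or equivalently on the distance from $\gamma$ to $\gamma_0$ in the weak order. The base case $\gamma = \gamma_0$ is immediate: $\overline{\O_{\gamma_0}} = \B$ and the only directed path from $\gamma_0$ to itself has length zero, so $W(\gamma_0) = \{e\}$ and $\overline{Bw_0 e B/B} = X_{w_0} = \B$. No degeneration is needed, and the class identity \eqref{eqn.class} is tautological.

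For the inductive step, fix $\gamma \ne \gamma_0$ and choose any cover $\gamma \prec \tau$ in the weak order labeled by a simple reflection $s_i$. The inductive hypothesis supplies a flat degeneration of $\overline{\O_\tau}$ onto the reduced union of Schubert varieties indexed by $W(\tau)$. I would transfer this to a degeneration of $\overline{\O_\gamma}$ via the minimal parabolic $\P^1$-bundle projection $\pi_i \colon G/B \to G/P_i$. By the defining property of the cover, $\pi_i^{-1}(\pi_i(\overline{\O_\gamma})) = \overline{\O_\tau}$, so $\overline{\O_\gamma}$ sits inside the $\P^1$-fibration $\pi_i|_{\overline{\O_\tau}}$ as a (multi-)section over $\pi_i(\overline{\O_\tau})$. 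The concrete tool is the one-parameter subgroup of $\mathrm{Aut}(\P^1)$ fixing the two $B$-fixed points of each fiber: spreading this $\mathbb{G}_m$-action out over $G/P_i$ produces a one-parameter family that, by the inductive hypothesis applied fiberwise, sweeps $\overline{\O_\gamma}$ onto a union of Schubert varieties. The combinatorial observation that every directed path from $\gamma$ to $\gamma_0$ factors uniquely as an initial edge $\gamma \xrightarrow{s_j} \tau'$ followed by a path from $\tau'$ to $\gamma_0$ matches the geometric decomposition, giving the claimed flat degeneration onto $\bigcup_{w \in W(\gamma)} \overline{Bw_0 w B/B}$.

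The rational singularities claim I would establish in parallel: iterating the minimal parabolic bundles along any maximal chain in the weak order from $\gamma$ to $\gamma_0$ produces a Bott-Samelson-type resolution $Z \to \overline{\O_\gamma}$ with $Z$ smooth. Since the map is birational and the fibers are rationally connected towers of $\P^1$-bundles, a standard criterion (Kempf, or Brion-Kumar) transfers rational singularities to $\overline{\O_\gamma}$. The cohomology identity \eqref{eqn.class} then follows at once from flatness, which preserves Chow classes, together with the reducedness of the special fiber.

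The main obstacle is verifying that the degenerate fiber is \emph{precisely} the reduced union described by $W(\gamma)$, with no multiplicities and no spurious components. This hinges on the Richardson-Springer classification of weak order covers into types $U$, $T$, and $N$ (following Matsuki-\={O}shima in the symmetric space setting). Each type produces a different behavior of the $\P^1$-fiber restriction of $\pi_i$, and one must check in each case that the pullback of the inductive degeneration yields exactly the expected number of components with multiplicity one. The heart of the argument is showing that the combinatorial recursion satisfied by $W$-sets under cover relations matches the geometric fiberwise behavior, so that labeled paths in the weak order correspond bijectively to components of the degenerate fiber.
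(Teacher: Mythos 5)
The paper does not prove this theorem; it is cited directly from Brion's work \cite[Theorem 6]{Brion2001}, together with a remark explaining that the absence of multiplicities in the sum is a consequence of Vust's classification \cite{Vust1990} applied to the pair $(GL_n, GL_p\times GL_q)$. Your proposal instead attempts a self-contained reconstruction of Brion's argument, so the comparison is between your sketch and Brion's actual proof rather than against anything in this paper.

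Your induction scheme and the idea of degenerating through the minimal parabolic $\P^1$-bundles $\pi_i \colon G/B \to G/P_i$ do match the broad architecture of Brion's proof. However, the step you flag as ``the main obstacle''---verifying that the special fiber is the \emph{reduced} union over $W(\gamma)$, with no multiplicities---is not a detail you can defer; it is the entire content of the refinement being claimed. In Brion's general theorem the class of $\overline{\O_\gamma}$ is $\sum_{w\in W(\gamma)} d(\O_\gamma,w)\,[\overline{Bw_0wB/B}]$ where each $d(\O_\gamma,w)$ is a power of $2$, and these degrees are genuinely greater than $1$ for some spherical subgroups. The version stated here, with all coefficients equal to $1$, holds \emph{because} $K = GL_p \times GL_q$ is a special case; you would need to actually carry out the case analysis of cover types (in Richardson--Springer terminology, types $U$, $T$, $N$) and show the degree stays $1$ at each step, or invoke Vust's classification as the paper does. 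Leaving this to ``one must check'' leaves the statement unproved.

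The rational singularities claim suffers from the same gap from a different angle: the Bott--Samelson-type map $Z \to \overline{\O_\gamma}$ you build from a maximal chain in weak order is generically finite of degree $\prod d(\cdot,\cdot)$ along the chain, and is a \emph{resolution} (i.e.\ birational) precisely when all those degrees are $1$. Without establishing that first, you cannot conclude $Z$ is a resolution, so the Kempf-type criterion does not directly apply. A minor additional point: $\pi_i^{-1}(\pi_i(\overline{\O_\gamma}))$ need not literally equal $\overline{\O_\tau}$; the correct statement is that $\overline{\pi_i^{-1}(\pi_i(\O_\gamma))}$ contains $\O_\tau$ as its dense orbit, and the geometry of $\overline{\O_\gamma}$ inside this $\P^1$-fibration depends on the cover type (section vs.\ double cover of the base vs.\ the full preimage).
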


\begin{remark}  
Let us denote by $\mathcal{B}(G/K)$ the set of all $B$-orbit closures in a spherical homogeneous space $G/K$, where $G$ is a complex connected reductive algebraic group, and $K$ is a spherical subgroup of $G$. 
(As usual, $T$, $B$, and $W$ stand for a maximal torus in $G$, a Borel subgroup containing $T$ in $G$, and the Weyl group of $G$, respectively.) 
For $Y\in \mathcal{B}(G/K)$, the $W$-set of $Y$ consists of $w\in W$ such that the natural quotient morphism $\pi_{Y,w} : \overline{BwB}\times_B Y\to G/K$ is surjective and generically finite. 
It turns out that, by~\cite[Lemma 5]{Brion2001}, this definition is equivalent to a generalization of our Definition~\ref{def.Wset} to the setup of spherical homogeneous spaces.

Let $d(Y,w)$ denote the degree of $\pi_{Y,w}$. 
It turns out that this number is always a power of 2,~\cite[Lemma 5 (iii)]{Brion2001}. The real geometric usefulness of this integer is explained by Brion in~\cite[Theorem 6]{Brion2001}. In particular, the cohomology class corresponding to $Y$ in $H^*(G/B,\Z)$ 
is given by 
\[
[Y] = \sum_{w\in W(Y)} d(Y,w)[\overline{Bw_0 wB/B}].
\]
In our special case, where $K=GL_p(\C)\times GL_q(\C)$, 
the work of Vust~\cite{Vust1990} implies that each of these degrees is equal to $1$, implying  our identity~(\ref{eqn.class}). It also implies the vanishing of all higher cohomology spaces for the restrictions of effective line bundles from $G/B$ to $Y$.
\end{remark}

We now recall a combinatorial description for the weak order on $\Clan_{p,q}$ used in the work of the first author, Joyce, and Wyser~\cite{CJW}.  This description is most easily stated in terms of charged matchings.  A \textit{matching} on $[n]$ is a finite graph on the vertex set $[n]$ such that each vertex is either isolated or adjacent to precisely one other vertex. A \textit{charged matching} is a matching with an assignment of a $+$ or $-$ charge to each isolated vertex.

The set of $(p,q)$-clans is in bijection with the set of all \textit{charged matchings} on $[n]$ having $p-q$ more $+$'s than $-$'s.  Explicitly, we obtain a matching from a clan $\gamma=c_1c_2\cdots c_n$ by connecting $i$ and $j$ by an arc whenever $c_i=c_j\in \Z_+$ and recording all signed entries as charges on isolated vertices. We identify the set of $(p,q)$-clans with charged matchings throughout, but particularly in Section~\ref{S:Wsets} below.

\begin{example} The matching associated to the (5,3)-clan $\gamma=+1+-2+21$ is as follows. 

\begin{figure}[htp]
\begin{center}
\scalebox{.8}{
\begin{tikzpicture}

\begin{scope}[xshift=0cm]
 \node (x1) at (-4,0) {$\bullet$}; 
 \node (x2) at (-3,0) {$\bullet$};
 \node (x3) at (-2,0) {$\bullet$};
 \node (x4) at (-1,0) {$\bullet$};
 \node (x5) at (0,0) {$\bullet$};
 \node (x6) at (1,0) {$\bullet$};
 \node (x7) at (2,0) {$\bullet$};
 \node (x8) at (3,0) {$\bullet$};
  
\node at (-4,-.5) {$1$}; 
\node at (-3,-.5) {$2$};
\node at (-2,-.5) {$3$};
\node at (-1,-.5) {$4$};
\node at (0,-.5) {$5$};
\node at (1,-.5) {$6$};
\node at (2,-.5) {$7$};
 \node at (3,-.5) {$8$};

\node at (-4,+.5) {$+$}; 

\node at (-2,+.5) {$+$};
\node at (-1,+.5) {$-$};

\node at (1,+.5) {$+$};

\draw [ultra thick] (x5) to[out=90 ,in=90] (x7) ;
\draw [ultra thick,-] (x2) to[out=60 ,in=120] (x8) ;
\end{scope}
 \end{tikzpicture}
}
\end{center}
\end{figure}
\end{example}

From~\cite[Section 2.5]{CJW}, we get that the weak order on clans is the transitive closure of the covering relations 
\[
\gamma \xrightarrow{\; s_i \;} \gamma'
\]
where we obtain $\gamma'$ from $\gamma$ according to one of the following moves on the corresponding charged matchings, each of which is illustrated in Figure~\ref{F:coverrelations} below.

\begin{itemize}
\item Types IA1 and IA2: Switch the endpoint of a strand with an adjacent sign so as to lengthen the strand.
\item Type IB: Create a crossing from two disjoint strands at consecutive vertices.
\item Types IC1 and IC2: Create a nested pair of strands by uncrossing the ends of two crossing strands at consecutive vertices. 
\item Type II: Replace a pair of consecutive, opposite charges by a strand of length 1.
\end{itemize}

An astute reader will note that~\cite{CJW} actually studies the \textit{opposite weak order} on $\Clan_{p,q}$ so our Figure~\ref{F:coverrelations} reverses the covering relations as presented in Figure 2.5 of that reference.

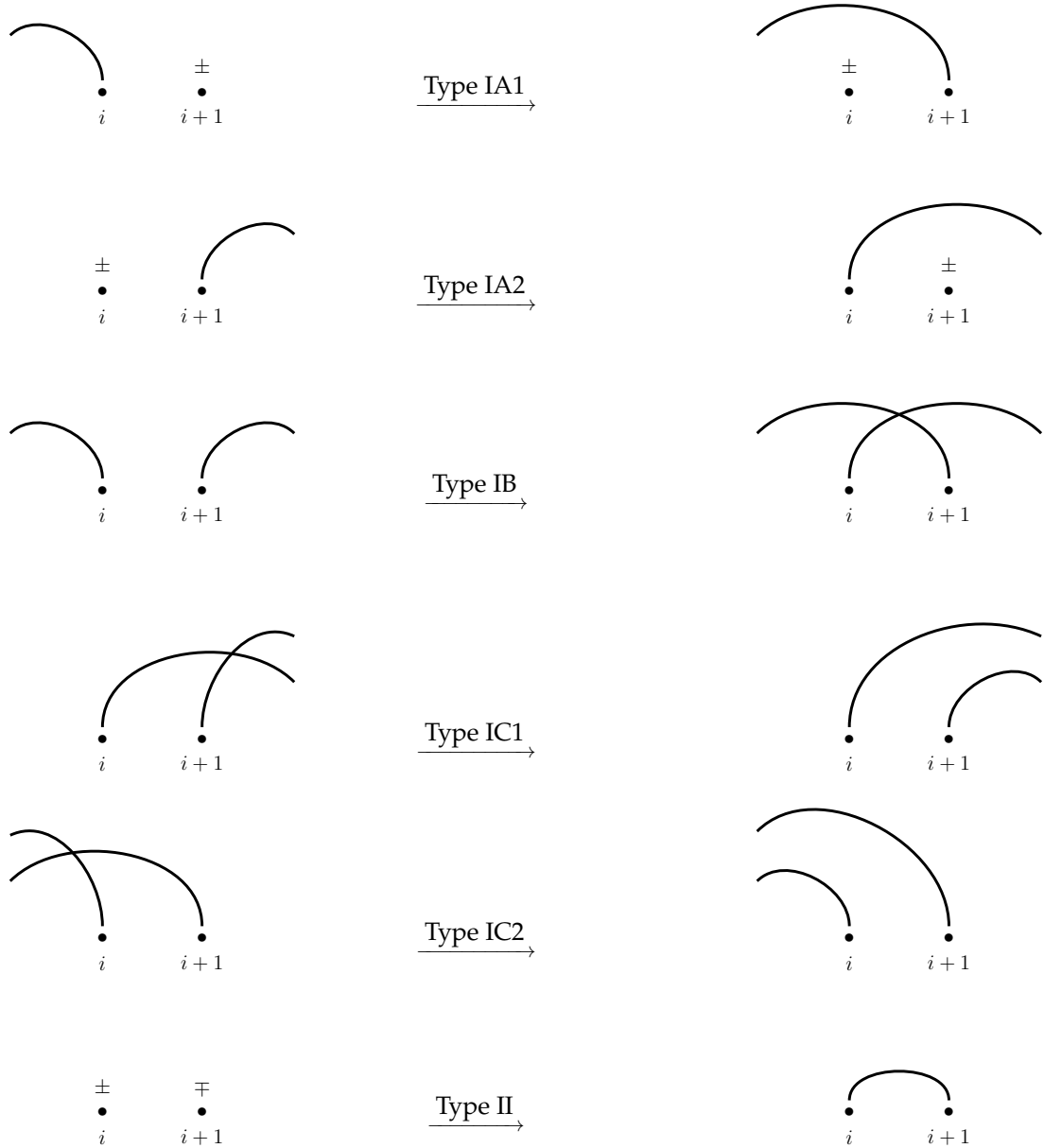
\begin{figure}[htp]
\begin{center}

\scalebox{.7}{
\begin{tikzpicture}

\begin{scope} [xshift=7.5cm, yshift=11.5cm]
\node (a) at (0,0) {$\bullet$}; 
\node (a1) at (-2,1) {{}};
\node (b) at (2,0) {$\bullet$}; 
\node at (0,.5) {$\pm$}; 
\node at (0,-.5) {$i$}; 
\node at (2,-.5) {${i+1}$}; 
\draw [ultra thick] (b) to[out=90 ,in=45] (a1) ;
\end{scope} 

\begin{scope} [xshift=-7.5cm, yshift=11.5cm]
\node (a) at (0,0) {$\bullet$}; 
\node (a1) at (-2,1) {{}};
\node (b) at (2,0) {$\bullet$}; 
\node at (2,.5) {$\pm$}; 
\node at (0,-.5) {$i$}; 
\node at (2,-.5) {${i+1}$}; 
\draw [ultra thick] (a) to[out=90 ,in=45] (a1) ;
\end{scope} 

\begin{scope} [xshift=0cm, yshift=11.5cm]
\node at (0,0) {$\xrightarrow{\;\text{\large{Type IA1}}\;}$};
\end{scope}

\begin{scope} [xshift=7.5cm, yshift=7.5cm]
\node (a) at (0,0) {$\bullet$}; 
\node (a1) at (4,1) {{}};
\node (b) at (2,0) {$\bullet$}; 
\node at (2,.5) {$\pm$}; 
\node at (0,-.5) {$i$}; 
\node at (2,-.5) {${i+1}$}; 
\draw [ultra thick] (a) to[out=90 ,in=135] (a1) ;
\end{scope} 

\begin{scope} [xshift=-7.5cm, yshift=7.5cm]
\node (a) at (0,0) {$\bullet$}; 
\node (a1) at (4,1) {{}};
\node (b) at (2,0) {$\bullet$}; 
\node at (0,.5) {$\pm$}; 
\node at (0,-.5) {$i$}; 
\node at (2,-.5) {${i+1}$}; 
\draw [ultra thick] (b) to[out=90 ,in=135] (a1) ;
\end{scope} 

\begin{scope} [xshift=0cm, yshift=7.5cm]
\node at (0,0) {$\xrightarrow{\;\text{\large{Type IA2}}\;}$};
\end{scope}

\begin{scope} [xshift=7.5cm, yshift=3.5cm]
\node (a) at (0,0) {$\bullet$}; 
\node (a1) at (4,1) {{}};
\node (b) at (2,0) {$\bullet$}; 
\node (bb) at (-2,1) {{}}; 
\node at (0,-.5) {$i$}; 
\node at (2,-.5) {${i+1}$}; 
\draw [ultra thick] (a) to[out=90 ,in=135] (a1) ;
\draw [ultra thick] (b) to[out=90 ,in=45] (bb) ;
\end{scope} 

\begin{scope} [xshift=-7.5cm, yshift=3.5cm]
\node (a) at (0,0) {$\bullet$}; 
\node (a1) at (4,1) {{}};
\node (b) at (2,0) {$\bullet$}; 
\node (bb) at (-2,1) {{}}; 
\node at (0,-.5) {$i$}; 
\node at (2,-.5) {${i+1}$}; 
\draw [ultra thick] (a) to[out=90 ,in=45] (bb) ;
\draw [ultra thick] (b) to[out=90 ,in=135] (a1) ;

\end{scope} 

\begin{scope} [xshift=0cm, yshift=3.5cm]
\node at (0,0) {$\xrightarrow{\;\text{\large{Type IB}}\;}$};
\end{scope}

\begin{scope} [xshift=7.5cm, yshift=-1.5cm]
\node (a) at (0,0) {$\bullet$}; 
\node (a1) at (4,2) {{}};
\node (b) at (2,0) {$\bullet$}; 
\node (bb) at (4,1) {{}}; 
\node at (0,-.5) {$i$}; 
\node at (2,-.5) {${i+1}$}; 
\draw [ultra thick] (a) to[out=90 ,in=155] (a1) ;
\draw [ultra thick] (b) to[out=90 ,in=135] (bb) ;
\end{scope} 

\begin{scope} [xshift=-7.5cm, yshift=-1.5cm]
\node (a) at (0,0) {$\bullet$}; 
\node (a1) at (4,1) {{}};
\node (b) at (2,0) {$\bullet$}; 
\node (bb) at (4,2) {{}}; 
\node at (0,-.5) {$i$}; 
\node at (2,-.5) {${i+1}$}; 
\draw [ultra thick] (a) to[out=90 ,in=135] (a1) ;
\draw [ultra thick] (b) to[out=90 ,in=155] (bb) ;

\end{scope} 

\begin{scope} [xshift=0cm, yshift=-1.5cm]
\node at (0,0) {$\xrightarrow{\;\text{\large{Type IC1}}\;}$};
\end{scope}

\begin{scope} [xshift=7.5cm, yshift=-5.5cm]
\node (a) at (0,0) {$\bullet$}; 
\node (a1) at (-2,1) {{}};
\node (b) at (2,0) {$\bullet$}; 
\node (bb) at (-2,2) {{}}; 
\node at (0,-.5) {$i$}; 
\node at (2,-.5) {${i+1}$}; 
\draw [ultra thick] (a) to[out=90 ,in=45] (a1) ;
\draw [ultra thick] (b) to[out=90 ,in=45] (bb) ;
\end{scope} 

\begin{scope} [xshift=-7.5cm, yshift=-5.5cm]
\node (a) at (0,0) {$\bullet$}; 
\node (a1) at (-2,2) {{}};
\node (b) at (2,0) {$\bullet$}; 
\node (bb) at (-2,1) {{}}; 
\node at (0,-.5) {$i$}; 
\node at (2,-.5) {${i+1}$}; 
\draw [ultra thick] (a) to[out=90 ,in=25] (a1) ;
\draw [ultra thick] (b) to[out=90 ,in=45] (bb) ;

\end{scope} 

\begin{scope} [xshift=0cm, yshift=-5.5cm]
\node at (0,0) {$\xrightarrow{\;\text{\large{Type IC2}}\;}$};
\end{scope}

\begin{scope} [xshift=7.5cm, yshift=-9cm]
\node (a) at (0,0) {$\bullet$}; 
\node (b) at (2,0) {$\bullet$}; 
\node at (0,-.5) {$i$}; 
\node at (2,-.5) {${i+1}$}; 
\draw [ultra thick] (a) to[out=90 ,in=90] (b) ;
\end{scope} 

\begin{scope} [xshift=-7.5cm, yshift=-9cm]
\node (a) at (0,0) {$\bullet$}; 
\node (b) at (2,0) {$\bullet$}; 
\node at (0,-.5) {$i$}; 
\node at (2,-.5) {${i+1}$}; 
\node at (0,.5) {$\pm$}; 
\node at (2,.5) {$\mp$}; 
\end{scope} 

\begin{scope} [xshift=0cm, yshift=-9cm]
\node at (0,0) {$\xrightarrow{\;\text{\large{Type II}}\;}$};
\end{scope}

 \end{tikzpicture}
}
\end{center}
\caption{Cover relations of the weak order on $\mathbf{Clan}_{p,q}$.}
\label{F:coverrelations}
\end{figure}

\section{Irreducible Hessenberg varieties $\Hess(\x_{p,q},\m)$} \label{S:Irreducible}

In this section, we classify all irreducible Hessenberg varieties of the form $\Hess(\x_{p,q},\m)$ and prove Theorem~\ref{thm1.intro} from the Introduction. 
To begin, we identify the $K$-orbits that are contained in $\Hess(\x_{p,q},\m)$.  

\begin{proposition} \label{prop.orbitsinhess}
The $K$-orbit $\O_\gamma$ associated to the $(p,q)$-clan $\gamma=c_1c_2\cdots c_n$ lies in $\Hess(\x_{p,q},\m)$ if and only if $m_i \geq j$ whenever $c_i=c_j \in \Z_+$ with $i<j$.
\end{proposition}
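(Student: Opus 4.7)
The plan is to exploit the $K$-invariance of $\Hess(\x_{p,q},\m)$ (established in Section 2.1) together with the explicit description of the flag representative $V(\gamma)_\bullet$ from Definition \ref{def.flag.rep}. Since $\O_\gamma = K V(\gamma)_\bullet$, we have $\O_\gamma \subseteq \Hess(\x_{p,q},\m)$ if and only if $V(\gamma)_\bullet \in \Hess(\x_{p,q},\m)$, so the task reduces to checking the Hessenberg condition $\x_{p,q} V(\gamma)_i \subseteq V(\gamma)_{m_i}$ for each $i$ directly on the basis $v_1,\dots,v_n$.

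The key observation is that each $v_k$ is either an eigenvector of $\x_{p,q}$ or lies in a two-dimensional $\x_{p,q}$-invariant subspace spanned by a matched pair. More precisely, recall that $\x_{p,q}e_r = \lambda_1 e_r$ for $r \leq p$ and $\x_{p,q}e_r = \lambda_2 e_r$ for $r > p$. Hence if $c_k \in \{+,-\}$, then $v_k$ is a standard basis vector and $\x_{p,q} v_k \in \C v_k \subseteq V(\gamma)_k$. If instead $c_k = c_j \in \Z_+$ with $k < j$, then by Definition \ref{def.flag.rep} one has $v_k = e_a + e_b$ and $v_j = e_a - e_b$ with $a \leq p < b$. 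A short calculation gives
\begin{equation*}
\x_{p,q} v_k = \tfrac{\lambda_1+\lambda_2}{2}\, v_k + \tfrac{\lambda_1-\lambda_2}{2}\, v_j,
\end{equation*}
so $\x_{p,q} v_k \in \mathrm{span}(v_k,v_j) \subseteq V(\gamma)_j$. Symmetrically, if $c_k = c_i$ with $i < k$, then $\x_{p,q}v_k \in \mathrm{span}(v_i,v_k) \subseteq V(\gamma)_k$.

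Setting $j(k) := j$ when $c_k = c_j \in \Z_+$ with $k < j$ and $j(k) := k$ otherwise, the computation above shows $\x_{p,q} v_k \in V(\gamma)_{j(k)}$. Therefore $\x_{p,q} V(\gamma)_i \subseteq V(\gamma)_{m_i}$ for every $i$ is equivalent to $j(k) \leq m_i$ for every $k \leq i$. The forward direction of the stated condition ($j(i) \leq m_i$ whenever $c_i = c_j$ with $i < j$) is the case $k = i$. For the reverse direction, if $k < i$ with $c_k = c_j$ and $k < j$, then by hypothesis $m_k \geq j$, and weak monotonicity of the Hessenberg vector gives $m_i \geq m_k \geq j$, while in all other cases $j(k) = k \leq i \leq m_i$ trivially. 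Combining these two directions yields the proposition.

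The only potentially delicate point is the bookkeeping needed to verify the formula $\x_{p,q} v_k = \frac{\lambda_1+\lambda_2}{2} v_k + \frac{\lambda_1-\lambda_2}{2} v_j$, which is routine once one writes $e_a = \tfrac{1}{2}(v_k+v_j)$ and $e_b = \tfrac{1}{2}(v_k-v_j)$; no genuine obstacle arises.
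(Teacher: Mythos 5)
Your argument is correct and takes essentially the same approach as the paper: reduce via $K$-invariance to checking the flag representative $V(\gamma)_\bullet$ from Definition~\ref{def.flag.rep}, and then use the structure of the basis vectors $v_k$ to analyze the Hessenberg condition (you work in the $v$-basis where the paper works in the $e$-basis, but the content is the same). One small point worth a sentence: the claim that $\x_{p,q}V(\gamma)_i \subseteq V(\gamma)_{m_i}$ is \emph{equivalent} to $j(k)\leq m_i$ for all $k\leq i$ — rather than merely implied by it — follows because when $j(k)>k$ the coefficient $\tfrac{\lambda_1-\lambda_2}{2}$ of $v_{j(k)}$ in $\x_{p,q}v_k$ is nonzero and the $v_l$ form a basis, so $j(k)$ is the \emph{least} index $m$ with $\x_{p,q}v_k\in V(\gamma)_m$; as written you only verify the ``if'' direction of that equivalence.
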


\begin{proof} It suffices to determine which clans $\gamma$ satisfy $V(\gamma)_\bullet \in \Hess(\x_{p,q},\m)$, where $V(\gamma)$ is the flag representative of $\O_\gamma$ specified in Definition~\ref{def.flag.rep} above.
We observe first that each $e_i$ is an eigenvector for $\x_{p,q}$ and that if $v_i \in \{e_r+e_s,e_r-e_s\}$ with $r \in [p]$ and $p<s \leq n$, then $\C\{v_i,\x_{p,q} v_i\}=\C\{e_r,e_s\}$.  Thus $V(\gamma)_i+\x_{p,q} V(\gamma)_i$ is spanned by those standard basis vectors $e_k$ such that one of
\begin{itemize}
\item there is some $j \in [i]$ with $v_j=e_k$, or
\item there is some $a \in [i]$ with $v_a=e_r+e_s$ and $k \in \{r,s\}$
\end{itemize}
holds.  On the other hand, the standard basis vector $e_k$ is an element of $V(\gamma)_{m_i}$ if and only if one of
\begin{itemize}
\item there is some $j \in [m_i]$ with $v_j=e_k$, or
\item there is some $b \in [m_i]$ with $v_b=e_r-e_s$ and $k \in \{r,s\}$
\end{itemize}
holds.  Indeed, if there is no $j \in [m_i]$ with $v_j=e_k$, then $e_k \in V(\gamma)_{m_i}$ if and only if there are $a,b \in [m_i]$ with $v_a=e_r+e_s$, $v_b=e_r-e_s$ and $k \in \{r,s\}$.  In this case, $a<b$ and $c_a=c_b$ by definition of the flag $V(\gamma)_\bullet$.  The proposition follows.
\end{proof} 

As any $\Hess(\x_{p,q},\m)$ is a union of $K$-orbits, the next proposition follows immediately from the definitions.

\begin{proposition} \label{prop.irr.criterion.1}
The Hessenberg variety $\Hess(\x_{p,q},\m)$ is irreducible if and only if, among the clans corresponding to $K$-oribts contained in $\Hess(\x_{p,q}, \m)$, there is a unique maximal one with respect to the inclusion order.
\end{proposition}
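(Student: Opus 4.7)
The plan is to view $\Hess(\x_{p,q},\m)$ as a finite union of $K$-orbit closures and then apply the standard criterion for irreducibility of such a union.

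First, I would record that $\Hess(\x_{p,q},\m)$ is closed in $\B$ (being cut out by the conditions $\x V_i\leq V_{m_i}$) and $K$-stable (by the discussion preceding Proposition~\ref{prop.orbitsinhess}). Set
\[
S \;:=\; \{\gamma \in \Clan_{p,q}\mid \O_\gamma \subseteq \Hess(\x_{p,q},\m)\},
\]
which Proposition~\ref{prop.orbitsinhess} characterizes combinatorially. Since $K$ acts on $\B$ with finitely many orbits, one obtains
\[
\Hess(\x_{p,q},\m) \;=\; \bigsqcup_{\gamma\in S}\O_\gamma \;=\; \bigcup_{\gamma\in S}\overline{\O_\gamma},
\]
where the second equality holds because $\overline{\O_\gamma}\subseteq \Hess(\x_{p,q},\m)$ for every $\gamma\in S$ by closedness of the Hessenberg variety, while conversely each $\O_\gamma\subseteq\overline{\O_\gamma}$.

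Second, I would invoke the elementary fact that a finite union of closed irreducible subsets of a variety is irreducible if and only if one of them contains all the others. Each $\overline{\O_\gamma}$ is irreducible, being the closure of a homogeneous space under the connected group $K$. Hence $\Hess(\x_{p,q},\m)$ is irreducible precisely when there exists some $\gamma_\ast \in S$ with $\overline{\O_\gamma}\subseteq\overline{\O_{\gamma_\ast}}$ for all $\gamma\in S$; by Definition~\ref{def.inclusion.order} this is the assertion that $\gamma_\ast$ is the greatest element of $(S,\leq)$.

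Finally, I would observe that in the finite poset $S$, a greatest element exists if and only if there is a unique maximal element: any element of a finite poset lies below some maximal element, so when only one such maximal exists it dominates everything in $S$. This converts \emph{greatest element} into \emph{unique maximal element}, yielding the proposition. The argument has no substantive obstacle; the one point that deserves explicit mention is the equality $\Hess(\x_{p,q},\m)=\bigcup_{\gamma\in S}\overline{\O_\gamma}$, which rests on closedness of the Hessenberg variety together with the finiteness of the $K$-orbit decomposition. Beyond this, the proposition is the standard statement that the irreducible components of a $K$-stable closed subvariety of $\B$ are the closures of the $K$-orbits that are maximal under the inclusion order.
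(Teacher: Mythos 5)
Your proof is correct and fills in exactly the reasoning the paper leaves implicit (the paper states only that the proposition ``follows immediately from the definitions''): since $\Hess(\x_{p,q},\m)$ is closed and $K$-stable with $K$ spherical, it is a finite union of irreducible $K$-orbit closures, which is irreducible iff one of them contains all others, i.e.\ iff the corresponding finite poset of clans has a greatest (equivalently, a unique maximal) element.
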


Let us specify two particular clans $\sigma$ and $\tau$ in $\Clan_{p,q}$ by
\[
\sigma:= \underbrace{++ \cdots +}_{\text{$p$ times}}\underbrace{--\cdots -}_{\text{$q$ times}}  
\]
and
\[
\tau:= \underbrace{-- \cdots -}_{\text{$q$ times}}\underbrace{++\cdots +}_{\text{$p$ times}}.  
\]
Observe that $\sigma(i;-)=0$ for $i \leq p$ and $\tau(i;+)=0$ for $i \leq q$.  
Under our assumption that $p \geq q$, the next claim follows.

\begin{lemma} \label{lemma.threeconditions}
If $\gamma=c_1c_2\cdots c_n\in \Clan_{p,q}$ such that $\sigma \leq \gamma$ and $\tau \leq \gamma$, then all of
\begin{enumerate}[label=(\alph*)]
\item $c_i=i$ for each $i \in [q]$,
\item $c_i=+$ for $q<i \leq p$, and
\item $\{c_i \mid p<i \leq p+q\}=[q]$
\end{enumerate}
hold.
In particular, if $\ov{\O_\gamma}=\Hess(\x_{p,q},\m)$ for some Hessenberg vector $\m$, then $\gamma$ satisfies all three conditions.
\end{lemma}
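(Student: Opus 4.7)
The plan is to apply Wyser's criterion (Theorem~\ref{thm.inclusion}) to extract numerical constraints from the hypotheses $\sigma \le \gamma$ and $\tau \le \gamma$, and then translate those constraints into the structural statements (a)--(c). A direct count shows that $\sigma(i;-)=0$ for $i\le p$ (since $\sigma$ begins with $p$ plus signs) and $\tau(i;+)=0$ for $i\le q$ (symmetrically). Theorem~\ref{thm.inclusion} then forces $\gamma(i;-)=0$ for all $i\le p$ and $\gamma(i;+)=0$ for all $i\le q$. Unpacking the definition of the statistics, the first identity says that no minus sign and no complete matched pair occurs in $c_1,\ldots,c_p$; the second says that no plus sign and no complete matched pair occurs in $c_1,\ldots,c_q$ (the matched-pair portion being subsumed in the first since $q\le p$).

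From these two facts, (a) follows immediately: each of $c_1,\ldots,c_q$ must be a natural number (no plus or minus signs are permitted there), and no two of them can be equal (no internal matched pair is permitted), so they are $q$ distinct first occurrences of natural numbers. The canonical-representative convention that first occurrences appear in increasing order then yields $c_i=i$ for every $i\in[q]$.

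For (c), observe that the $q$ arcs initiated at positions $1,\ldots,q$ must close at positions larger than $q$, while the ``no matched pair in $[1,p]$'' constraint forbids them from closing by position~$p$. Their $q$ closing ends must therefore occupy the $q$-element interval $\{p+1,\ldots,p+q\}$ in its entirety, giving $\{c_i\mid p<i\le p+q\}=[q]$. For (b), consider any position $i$ with $q<i\le p$. A minus sign is excluded by the constraint on $c_1,\ldots,c_p$; a natural number entry there would necessarily be a first occurrence (again because no matched pair lies in $[1,p]$), but then its second occurrence would need a slot in $[p+1,p+q]$, contradicting the fact that those slots are already filled according to (c). Hence $c_i=+$.

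For the ``in particular'' clause, note that both $\sigma$ and $\tau$ contain no natural number entries, so the matching condition of Proposition~\ref{prop.orbitsinhess} is vacuously satisfied and $\O_\sigma,\O_\tau\subseteq \Hess(\x_{p,q},\m)$ for every Hessenberg vector $\m$; if $\ov{\O_\gamma}=\Hess(\x_{p,q},\m)$, this forces $\sigma,\tau\le\gamma$ and the conclusion reduces to the main statement. The main obstacle is careful bookkeeping: translating the numerical equalities $\gamma(q;+)=0$ and $\gamma(p;-)=0$ into the right structural facts, and observing that the $q$ forced arcs starting in $[1,q]$ leave no room at all for any other natural number or sign in either the interior region $[q+1,p]$ or the tail $[p+1,n]$.
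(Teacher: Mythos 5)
Your proof is correct and follows the same strategy as the paper's: extract the constraints $\gamma(i;-)=0$ for $i\le p$ and $\gamma(i;+)=0$ for $i\le q$ from Wyser's criterion, translate them into structural facts about where signs and matched pairs can appear, and then deduce (a)–(c) and the ``in particular'' clause. Your presentation is slightly more explicit about the order of deduction (a), then (c), then (b), but the substance is identical.
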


\begin{proof}
As each of $\O_\sigma$ and $\O_\tau$ lies in $\ov{\O_\gamma}$, we have
\begin{itemize}
\item $\gamma(i;-) \leq \sigma(i;-)=0$ for $i \leq p$ and
\item $\gamma(i;+) \leq \tau(i;+)=0$ for $i \leq q$
\end{itemize}
by Theorem~\ref{thm.inclusion}. These conditions imply that $\gamma$ cannot contain any signs or pairs of positive integers within the first $q$ entries and cannot contain any minus signs or pairs of positive integers within the first $p$ entries. Since $\gamma$ contains at most $q$ natural number pairs, conditions (a) and (b) now follow.  Condition (c) follows from (a) and (b) and the fact that $\gamma$ is a clan. The last statement of the lemma follows immediately, as both $\O_\sigma$ and $\O_\tau$ lie in $\Hess(\x_{p,q},\m)$ for every Hessenberg vector $\m$ by Proposition~\ref{prop.orbitsinhess}.
\end{proof}

We can rewrite condition (c) from Lemma \ref{lemma.threeconditions} as
\textit{\begin{itemize}
\item[(c')] there is some $w \in \S_q$ such that $c_{p+i}=w(i)$ for each $i \in [q]$.
\end{itemize}}
Given a clan $\gamma$ satisfying (a),(b), and (c'), we write $\gamma_w$ for $\gamma$.  Thus, for each $w\in \S_q$ we obtain a unique clan $\gamma_w=c_1^wc_2^w \cdots c_n^w$ defined by
\begin{itemize}
\item $c_i^w= +$ for all $q<i \leq p$, and
\item $c_i^w=c_{p+w^{-1}(i)}^w=i$ for all $i\in [q]$.
\end{itemize}
Note that $\gamma_0=\gamma_{y_0}$ where $y_0$ is the longest permutation in $\S_q$. In fact, the collection of all such $(p,q)$-clans forms is precisely the inclusion-interval $[\gamma_e, \gamma_0]$ in $\Clan_{p,q}$.

\begin{lemma} \label{lemma.w-statistics} Let $\gamma_e = 12\cdots q + \cdots +12\cdots q$ be the clan corresponding to the identity in  $\S_q$.  If $\gamma_e \leq \gamma$, then $\gamma=\gamma_w$ for some $w\in \S_q$.  In particular, $\gamma_w(i;+)=\gamma_0(i;+)$ and $\gamma_w(i;-)=\gamma_0(i;-)$ for all $i$, and $\gamma_w(i,j) = \gamma_0(i,j)$ whenever $i, j \in [p]$ or $i, j \in\{p+1, \ldots, n\}$.
\end{lemma}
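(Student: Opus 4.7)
The plan is to first use Lemma~\ref{lemma.threeconditions} to pin down the shape of any clan $\gamma$ satisfying $\gamma_e \leq \gamma$, and then to verify the statistic identities by direct computation from the explicit form of $\gamma_w$.

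For the first assertion, I would begin by checking that $\sigma \leq \gamma_e$ and $\tau \leq \gamma_e$ via Theorem~\ref{thm.inclusion}. Since $\sigma$ and $\tau$ carry no natural-number pairs, the inequalities on the $(i,j)$-statistic are automatic, and the sign inequalities reduce to a routine case analysis on the three regimes $i \leq q$, $q < i \leq p$, and $i > p$: for instance, $\sigma(i;+)=\min(i,p)$ dominates $\gamma_e(i;+)=\max(0,i-q)$ precisely because $n=p+q$. Transitivity of the inclusion order then yields $\sigma \leq \gamma$ and $\tau \leq \gamma$, so Lemma~\ref{lemma.threeconditions} applies, and $\gamma$ satisfies conditions (a), (b), and (c) of that lemma. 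The reformulation (c$'$) identifies $\gamma$ uniquely with some $\gamma_w$, $w \in \S_q$.

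For the statistic identities, I will read them off from the structure of $\gamma_w$. The first $q$ entries contain only first occurrences of the pairs $1,\ldots,q$, contributing nothing to either sign statistic. The next $p-q$ positions are all $+$, each adding $1$ to $\gamma_w(i;+)$ but $0$ to $\gamma_w(i;-)$. The final $q$ positions each close a pair, contributing $1$ to both $\gamma_w(i;+)$ and $\gamma_w(i;-)$. These tallies depend only on $p$, $q$, and $i$---not on $w$---and comparison with Example~\ref{ex.max.clan} gives $\gamma_w(i;+) = \gamma_0(i;+)$ and $\gamma_w(i;-) = \gamma_0(i;-)$ for all $i$.

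The final claim uses the observation that in $\gamma_w$ every natural-number pair opens in $[q]$ and closes in $\{p+1,\ldots,n\}$. Consequently, when $i,j \in [p]$ the condition ``second position $>j$'' is automatic, so $\gamma_w(i,j)=\min(i,q)$ is independent of $w$; and when $i,j \in \{p+1,\ldots,n\}$ the condition ``first position $\leq i$'' is automatic, so $\gamma_w(i,j)$ counts second occurrences in $(j,n]$, giving $n-j$ independently of $w$. Both values agree with $\gamma_0(i,j)$ in Example~\ref{ex.max.clan}. The argument is routine; the only care required is in tracking the three regimes above when tabulating the statistics, so no genuine obstacle arises.
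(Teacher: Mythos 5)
Your proof is correct. The first assertion is handled exactly as the paper does: observe $\sigma \leq \gamma_e$ and $\tau \leq \gamma_e$, invoke transitivity and Lemma~\ref{lemma.threeconditions}, and conclude $\gamma = \gamma_w$. For the statistic identities, however, your route differs from the paper's. The paper sandwiches: it notes that the relevant statistics of $\gamma_e$ agree with those of $\gamma_0$ (leaving that check to the reader via Example~\ref{ex.max.clan}), and then concludes from Theorem~\ref{thm.inclusion} that $\gamma_0(i;+) \leq \gamma_w(i;+) \leq \gamma_e(i;+)$ etc., forcing equality throughout since $\gamma_e \leq \gamma_w \leq \gamma_0$. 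You instead compute $\gamma_w$'s statistics directly from the explicit structure of the clan (opening positions in $[q]$, pluses in $(q,p]$, closings in $(p,n]$) and observe that the values are independent of $w$, then match them against Example~\ref{ex.max.clan}. Your direct computation is somewhat more self-contained --- it avoids having to separately tabulate $\gamma_e$'s statistics and does not rely on the monotonicity of Theorem~\ref{thm.inclusion} for the second part --- while the paper's sandwich is shorter on the page at the cost of delegating the $w=e$ base case to the reader. Both arguments are sound; the arithmetic in your three regimes checks out ($\gamma_w(i;+)=\max(0,i-q)$, $\gamma_w(i;-)=\max(0,i-p)$, $\gamma_w(i,j)=\min(i,q)$ when $i,j\in[p]$, and $\gamma_w(i,j)=n-j$ when $i,j>p$).
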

\begin{proof} We observe that both $\sigma\leq \gamma_e$ and $\tau \leq \gamma_e$. Thus $\gamma$ satisfies each of the conditions (a), (b), and (c') by Lemma~\ref{lemma.threeconditions}, and the first assertion of the lemma is proved.  To prove the second, we observe that the equality of the various statistics holds in the case of $w=e$; cf.~Example~\ref{ex.max.clan}. The general case now follows since $\gamma_0(i;-)\leq \gamma_w(i;-)\leq \gamma_e(i;-)$,  $\gamma_0(i;+)\leq \gamma_w(i;-)\leq \gamma_e(i;+)$, and $\gamma_e(i,j)\leq \gamma_w(i,j)\leq \gamma_0(i,j)$ by Theorem~\ref{thm.inclusion} in all cases. 
\end{proof}

The inclusion order on the clans in the interval $[\gamma_e, \gamma_0]$ is greatly simplified. Indeed, the only case in which the statistics appearing in Theorem~\ref{thm.inclusion} can differ is when considering $\gamma_w(i,j)$ with $i\in [q]$ and $j\in \{p+1,p+2,\ldots, n\}$. In that situation, we obtain the following.

\begin{lemma}\label{lemma.ij-statistic} For all $w\in \S_q$ and all $i \in [q], j \in\{p+1, \ldots, n\}$,
$$\begin{aligned}
\gamma_w(i, j) & =\left|\left\{w^{-1}(1), \ldots, w^{-1}(i)\right\} \cap\{j-p+1, \ldots, q\}\right| \\
& =\left|\left\{k \leq i \mid w^{-1}(k)>j-p\right\}\right| .
\end{aligned}
$$
\end{lemma}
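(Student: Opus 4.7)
The plan is to unpack the definition of $\gamma_w(i,j)$ directly. Recall that for any clan $\gamma = c_1 c_2 \cdots c_n$, the value $\gamma(i,j)$ counts pairs $(s,t)$ with $s \leq i < j < t$ and $c_s = c_t \in \Z_+$. For $\gamma_w$, the defining conditions pin down exactly where each natural number label appears: for each $k \in [q]$, the label $k$ occurs precisely at positions $s = k$ (in the opening block) and $t = p + w^{-1}(k)$ (in the closing block), and no other pairs of equal positive integers appear in $\gamma_w$.

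First, I would observe that for $i \in [q]$ and $j \in \{p+1, \ldots, n\}$ we automatically have $i \leq q \leq p < j$, so the condition $i < j$ is vacuous. Hence $\gamma_w(i,j)$ equals the number of $k \in [q]$ whose associated pair $(k,\; p+w^{-1}(k))$ satisfies both $k \leq i$ and $p + w^{-1}(k) > j$. Rewriting the second inequality as $w^{-1}(k) > j - p$ immediately yields the second equality of the lemma:
\[
\gamma_w(i,j) = \bigl|\{k \leq i \mid w^{-1}(k) > j - p\}\bigr|.
\]

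For the first equality, I would note that since $k$ ranges over $[i] \subseteq [q]$ and $w$ is a permutation of $[q]$, the condition $w^{-1}(k) > j - p$ is equivalent to $w^{-1}(k) \in \{j-p+1, \ldots, q\}$ (the upper bound $q$ being automatic). Rewriting the condition $k \leq i$ as $k \in \{1, \ldots, i\}$, i.e., $w^{-1}(k) \in \{w^{-1}(1), \ldots, w^{-1}(i)\}$, gives
\[
\gamma_w(i,j) = \bigl|\{w^{-1}(1),\ldots,w^{-1}(i)\} \cap \{j-p+1,\ldots,q\}\bigr|,
\]
which is the first equality.

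There is no significant obstacle: the proof is essentially bookkeeping once one identifies the two positions of each natural number label in $\gamma_w$. The only thing to be careful about is verifying that no stray pairs in other regions of the clan can contribute, which is immediate from the explicit description of $\gamma_w$ in terms of $w \in \S_q$ recorded just before the lemma.
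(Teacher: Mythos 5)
Your proof is correct and follows essentially the same approach as the paper: identify that for $\gamma_w$ the only natural-number pairs occur at positions $k$ and $p+w^{-1}(k)$ for $k\in[q]$, then translate the conditions $s\leq i<j<t$ into $k\leq i$ and $w^{-1}(k)>j-p$. The paper states this in a single sentence; your version simply spells out the bookkeeping.
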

\begin{proof} We have a pair $s<t$ such that $s \leq i<j<t$ and $c_s^w=c_t^w$ if and only if $t=w^{-1}(s)+p$ by definition of $\gamma_w$.
\end{proof}

\begin{example}
Consider $(5,3)$-clans $\gamma_w = 123++213$ and $\gamma_{w'} = 123++132$.
For the clan $\gamma_w$, $w(1)=2, w(2)=1,$ and $w(3)=3$, while for $\gamma_{w'}$ we have $w'(1)=1, w'(2)=3,$ and $w'(3)=2$. 
The definition of $\gamma(i;j)$ and straightforward calculation give us that
$$
(\gamma_w(i,j))_{i,j=1}^n=\left( \begin{array}{cccccccc} 
0 & 1 & 1 & 1 & 1 & 1 & 0 & 0 \\ 
0 & 0 & 2 & 2 & 2 & 1 & 0 & 0 \\ 
0 & 0 & 0 & 3 & 3 & 2 & 1 & 0 \\ 
0 & 0 & 0 & 0 & 3 & 2 & 1 & 0 \\ 
0 & 0 & 0 & 0 & 0 & 2 & 1 & 0 \\ 
0 & 0 & 0 & 0 & 0 & 0 & 1 & 0 \\ 
0 & 0 & 0 & 0 & 0 & 0 & 0 & 0 \\ 
0 & 0 & 0 & 0 & 0 & 0 & 0 & 0 \end{array} \right)
\quad
(\gamma_{w'}(i,j))_{i,j=1}^n=\left( \begin{array}{cccccccc} 
0 & 1 & 1 & 1 & 1 & 0 & 0 & 0 \\ 
0 & 0 & 2 & 2 & 2 & 1 & 1 & 0 \\ 
0 & 0 & 0 & 3 & 3 & 2 & 1 & 0 \\ 
0 & 0 & 0 & 0 & 3 & 2 & 1 & 0 \\ 
0 & 0 & 0 & 0 & 0 & 2 & 1 & 0 \\ 
0 & 0 & 0 & 0 & 0 & 0 & 1 & 0 \\ 
0 & 0 & 0 & 0 & 0 & 0 & 0 & 0 \\ 
0 & 0 & 0 & 0 & 0 & 0 & 0 & 0 \end{array} \right).
$$ 
Moreover, it also follows from Lemma~\ref{lemma.ij-statistic} that
\[\begin{array}{ccc}
 \gamma_w(1,6)=1=|\{1\}|  & \gamma_w(1,7)=0  &  \gamma_w(1,8)=0 \\
\gamma_w(2,6)=1=|\{1\}|  & \gamma_w(2,7)=0  &  \gamma_w(2,8)=0 \\
\gamma_w(3,6)=2=|\{1, 3\}|  & \gamma_w(3,7)=1=|\{3\}| &  \gamma_w(3,8)=0.
\end{array}\]
and, 
\[\begin{array}{ccc}
 \gamma_{w'}(1,6)=0  & \gamma_{w'}(1,7)=0  &  \gamma_{w'}(1,8)=0 \\
\gamma_{w'}(2,6)=1=|\{2\}|  & \gamma_{w'}(2,7)=1=|\{2\}|  &  \gamma_{w'}(2,8)=0 \\
\gamma_{w'}(3,6)=2=|\{2, 3\}|   & \gamma_{w'}(3,7)=1=|\{2\}| &  \gamma_{w'}(3,8)=0.
\end{array}\]
Therefore, the statistics from Theorem~\ref{thm.inclusion} only differ for these clans when $(i;j)$ is either $(1,6)$ or $(2,7)$.

\end{example}
 
Our work above tells us that if $\Hess(\x_{p,q},\m)$ is the closure of a single $K$-orbit, then it is equal to $\O_{\gamma_w}$ for some $w\in \S_q$.

\begin{corollary} \label{cor.xpqorbitclosure}
Let $\m=(m_1,\ldots,m_n)$ be a Hessenberg vector.  If there is some clan $\gamma$ such that $\Hess(\x_{p,q},\m)=\ov{\O_\gamma}$ then $\gamma=\gamma_w$ for some $w \in \S_q$.  Furthermore, $(m_1-p,m_2-p,\ldots,m_q-p)$ is a Hessenberg vector of length $q$ and $m_i=n$ for all $i\geq q$.
\end{corollary}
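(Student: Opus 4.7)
The approach is to bootstrap Proposition~\ref{prop.orbitsinhess} applied to several well-chosen extreme clans in order to pin down both the form of $\gamma$ and the constraints on the Hessenberg vector $\m$. All of the needed structural information will be extracted by combining Proposition~\ref{prop.orbitsinhess} with Lemmas~\ref{lemma.threeconditions} and~\ref{lemma.w-statistics}.

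First I will show $\gamma = \gamma_w$ for some $w \in \S_q$. The clans $\sigma = +\cdots+-\cdots-$ and $\tau = -\cdots-+\cdots+$ contain no pairs of positive integers, so the hypothesis in Proposition~\ref{prop.orbitsinhess} is vacuously satisfied, whence $\O_\sigma, \O_\tau \subseteq \Hess(\x_{p,q}, \m)$ for every Hessenberg vector $\m$. Because $\Hess(\x_{p,q}, \m) = \overline{\O_\gamma}$, we obtain $\sigma \leq \gamma$ and $\tau \leq \gamma$ in the inclusion order. Lemma~\ref{lemma.threeconditions} then forces $\gamma$ to satisfy conditions (a), (b), and (c), which is exactly the statement $\gamma = \gamma_w$ for some $w \in \S_q$.

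Next I will extract the shape of $\m$ by feeding the clan $\gamma_e$ into Proposition~\ref{prop.orbitsinhess}. The proof of Lemma~\ref{lemma.w-statistics} establishes $\gamma_e \leq \gamma_w$, so $\O_{\gamma_e} \subseteq \overline{\O_{\gamma_w}} = \Hess(\x_{p,q}, \m)$. The pairs of $\gamma_e$ sit at positions $(i, p+i)$ for $i \in [q]$, so Proposition~\ref{prop.orbitsinhess} applied to $\gamma_e$ yields $m_i \geq p + i$ for each $i \in [q]$. Combined with $m_i \leq n$ and the weak monotonicity of $\m$, this shows $i \leq m_i - p \leq q$ with $(m_1 - p, \ldots, m_q - p)$ weakly increasing, hence a Hessenberg vector of length $q$. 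Taking $i = q$ forces $m_q \geq n$ and therefore $m_q = n$; the weak monotonicity of $\m$ together with $m_i \leq n$ then yields $m_i = n$ for all $i \geq q$.

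There is little real obstacle once one recognizes that $\sigma$, $\tau$, and $\gamma_e$ are the correct extreme clans to drive the argument: $\sigma$ and $\tau$ force $\gamma$ into the form $\gamma_w$ via Lemma~\ref{lemma.threeconditions}, while $\gamma_e$---the smallest clan of that form---must still lie in $\Hess(\x_{p,q}, \m)$ and therefore imposes the tightest possible constraint on $\m$. The remainder is direct bookkeeping on the pair structure and the monotonicity of the Hessenberg vector.
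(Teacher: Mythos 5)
Your proof is correct, and for the second assertion it takes a genuinely different (and arguably cleaner) route than the paper. The first part, deducing $\gamma=\gamma_w$ from $\sigma,\tau\leq\gamma$ via Lemma~\ref{lemma.threeconditions}, is exactly the paper's argument; you simply re-derive the ``In particular'' clause of that lemma. For the second part, the paper applies Proposition~\ref{prop.orbitsinhess} to $\gamma_w$ itself, obtaining $m_i\geq p+w^{-1}(i)$, and then uses the monotonicity of $\m$ to deduce $m_i-p\geq\max\{w^{-1}(j)\mid j\leq i\}\geq i$. You instead apply Proposition~\ref{prop.orbitsinhess} to the minimal clan $\gamma_e$, whose arcs sit at $(i,p+i)$, and this hands you $m_i\geq p+i$ in one step. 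Your approach buys a shorter bookkeeping argument and makes the conclusion $m_i=n$ for $i\geq q$ immediate (the paper leaves that step implicit); what it costs is the extra fact $\gamma_e\leq\gamma_w$, which the paper's route avoids needing.

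One small point to tighten: you attribute $\gamma_e\leq\gamma_w$ to ``the proof of Lemma~\ref{lemma.w-statistics},'' but that lemma and its proof run in the opposite direction --- they take $\gamma_e\leq\gamma$ as hypothesis and conclude $\gamma=\gamma_w$, invoking $\gamma_e\leq\gamma_w$ as a consequence rather than establishing it from scratch. The inequality $\gamma_e\leq\gamma_w$ for every $w\in\S_q$ is indeed true and is asserted in the paper (the interval $[\gamma_e,\gamma_0]$ equals $\{\gamma_w\mid w\in\S_q\}$), but a clean citation would verify it directly, e.g.\ from Lemma~\ref{lemma.ij-statistic}: for $i\in[q]$ and $j\in\{p+1,\ldots,n\}$ one has $\gamma_w(i,j)=i-|\{k\leq i\mid w^{-1}(k)\leq j-p\}|\geq i-\min(i,j-p)=\gamma_e(i,j)$, while the plus/minus statistics agree identically across the interval, and Theorem~\ref{thm.inclusion} then gives $\gamma_e\leq\gamma_w$. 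With that gap patched, the argument is complete.
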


\begin{proof}
It follows immediately from Lemma~\ref{lemma.threeconditions} that $\gamma=\gamma_w$ for some $w \in \S_q$. 
By Proposition \ref{prop.orbitsinhess}, for each $i \in [q]$ we have $m_i \geq p+w^{-1}(i)$.  Thus, for each $i\in [q]$,
\[
p+w^{-1}(i)\leq m_i \leq p+q \Leftrightarrow w^{-1}(i) \leq m_i -p \leq q. 
\]
It follows that $(m_1-p,m_2-p,\ldots,m_q-p)$ is a sequence of positive integers satisfying $m_i-p \leq q$; it is also weakly increasing since $\m$ is. 
Moreover, for each $i \in [q]$, 
$$
m_i-p \geq \max\{w^{-1}(j) \mid j \leq i\} \geq i.
$$
This concludes the proof.
\end{proof}

It follows from Corollary \ref{cor.xpqorbitclosure} that there are at most $\cat_q=\frac{1}{q+1}{{2q} \choose {q}}$ Hessenberg vectors $\m$ of length $p+q$ such that $\Hess(\x_{p,q},\m)$ is a $K$-orbit closure as there are $\cat_q$ Hessenberg vectors of length $q$.  
We aim to show that there are exactly $\cat_q$ such $\m$, and classify the set of $\cat_q$ clans $\gamma$ such that $\Hess(\x_{p,q},\m)=\ov{\O_\gamma}$.

\begin{lemma} \label{prop.312free} 
Assume $u \in \S_q$ and $\m$ is a Hessenberg vector such that $\O_{\gamma_u} \subseteq \Hess(\x_{p,q},\m)$.  If there exist $i<j<k$ such that $u^{-1}(i)>u^{-1}(k)>u^{-1}(j)$ then there is some $w \in \S_q$ such that $\gamma_u \leq \gamma_w$ and $\O_{\gamma_w} \subseteq \Hess(\x_{p,q},\m)$.
\end{lemma}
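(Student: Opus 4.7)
The plan is to construct $w$ explicitly by a single transposition in $u$ and then verify the two required conditions using Proposition \ref{prop.orbitsinhess} together with the inclusion-order criteria from Lemmas \ref{lemma.w-statistics} and \ref{lemma.ij-statistic}. First I unpack the hypothesis: setting $b:=u^{-1}(j)$, $c:=u^{-1}(k)$, and $a:=u^{-1}(i)$, the condition $u^{-1}(i)>u^{-1}(k)>u^{-1}(j)$ says $b<c<a$ and the values at these positions are $u(b)=j$, $u(c)=k$, $u(a)=i$ with $i<j<k$ (a $231$ pattern in $u$). I would then define $w\in \S_q$ to be obtained from $u$ by swapping the values at positions $b$ and $c$, so $w(b)=k$, $w(c)=j$, and $w$ agrees with $u$ elsewhere; equivalently, $w^{-1}$ switches the images of $j$ and $k$ and is otherwise equal to $u^{-1}$.

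To verify $\gamma_u \leq \gamma_w$, Lemma \ref{lemma.w-statistics} reduces the check to the statistics $\gamma(s,p+\ell)$ with $s\in [q]$ and $\ell\in [q-1]$, since all other statistics from Theorem \ref{thm.inclusion} automatically agree. For these, Lemma \ref{lemma.ij-statistic} gives $\gamma_\star(s,p+\ell)=|\{r\le s : \star^{-1}(r) > \ell\}|$, and because $u^{-1}$ and $w^{-1}$ differ only on $\{j,k\}$, a short case analysis on $s$ shows that this count is unchanged unless $j\le s<k$, in which case the count for $w$ exceeds that for $u$ by exactly one precisely when $b\le \ell<c$ (the value $j$ has moved from position $b$ past $\ell$ to position $c$, while $k$ moves the opposite way but satisfies $k>s$, so it contributes nothing to the count at threshold $s$). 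Thus $\gamma_u \le \gamma_w$, with strict inequality in the range $j\le s<k$, $b\le \ell<c$.

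To verify $\O_{\gamma_w}\subseteq \Hess(\x_{p,q},\m)$, Proposition \ref{prop.orbitsinhess} reduces to showing $m_r \geq p + w^{-1}(r)$ for every $r\in [q]$. For $r\notin\{j,k\}$ this is inherited from the analogous bound for $u$, while for $r=k$ one has $w^{-1}(k)=b<c=u^{-1}(k)$, so the bound is strictly easier. The delicate case is $r=j$, where $w^{-1}(j)=c$: I would derive it from the Hessenberg bound for $u$ at the smaller index $i$ together with monotonicity of $\m$,
\[
m_j \;\ge\; m_i \;\ge\; p + u^{-1}(i) \;=\; p + a \;>\; p + c,
\]
using $i<j$ for the first inequality, the hypothesis on $u$ for the second, and $a>c$ (from the $231$ pattern) for the last. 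This final inequality is the main obstacle, as it is the unique point at which both the pattern hypothesis and the original Hessenberg constraint at the correct index $i$ come into play; once it is in hand the rest is bookkeeping.
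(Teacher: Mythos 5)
Your proof is correct and takes essentially the same approach as the paper: you pick the same $w$ (obtained from $u$ by switching $j$ and $k$), verify $\gamma_u \leq \gamma_w$ via the statistics in Theorem~\ref{thm.inclusion} (using Lemmas~\ref{lemma.w-statistics} and~\ref{lemma.ij-statistic} to reduce to the $(i,j)$-statistics, where the paper compresses this to ``direct examination''), and establish $\O_{\gamma_w}\subseteq\Hess(\x_{p,q},\m)$ via the same chain $m_j\ge m_i\ge p+u^{-1}(i)>p+w^{-1}(j)$. The only cosmetic difference is that the paper organizes the second step as a proof by contradiction over arcs of $\gamma_w$, whereas you argue directly by cases on $r$; the substance is identical.
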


\begin{proof} 
Let $w$ be obtained from $u$ by switching $j$ and $k$.  Direct examination shows that for all $a,b \in [n]$, all of $\gamma_w(a;+) \leq \gamma_u(a;+)$, $\gamma_w(a;-) \leq \gamma_u(a;-)$ and $\gamma_w(a,b) \geq \gamma_u(a,b)$ hold.  Thus $\gamma_u \leq \gamma_w$ by Theorem~\ref{thm.inclusion}.  Assume for contradiction that there exists $a,b \in [n]$ with $a<b$ and $s \in [q]$ such that $c^w_a=c^w_b=s$ and $b>m_a$.  By definition of the clan $\gamma_w$, $a \in [q]$ and $b=p+w^{-1}(a)$.  Since $\O_{\gamma_u} \subseteq \Hess(\x_{p,q},\m)$, it must be that $a \in \{j,k\}$ and so $b \in \{p+w(j),p+w(k)\}$.  However,
$$
m_a>m_i \geq p+u^{-1}(i)=p+w^{-1}(i)>p+\max\{w^{-1}(j),w^{-1}(k)\} \geq b,
$$ 
giving the desired contradiction.
\end{proof}

\begin{corollary} \label{cor.312free}
Let $w \in \S_q$.  If there is some Hessenberg vector $\m$ such that $\Hess(\x_{p,q},\m)=\ov{\O_{\gamma_w}}$, then $w^{-1}$ avoids the pattern $312$, hence $w$ avoids $231$.
\end{corollary}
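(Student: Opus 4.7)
The plan is to derive this as a direct consequence of Lemma~\ref{prop.312free} by contrapositive. Suppose for contradiction that $w^{-1}$ contains a $312$ pattern, i.e., there exist indices $i < j < k$ such that $w^{-1}(i) > w^{-1}(k) > w^{-1}(j)$. By hypothesis, $\Hess(\x_{p,q},\m) = \ov{\O_{\gamma_w}}$, and in particular $\O_{\gamma_w} \subseteq \Hess(\x_{p,q},\m)$. I would then apply Lemma~\ref{prop.312free} with $u = w$, which yields a permutation $w' \in \S_q$ (obtained from $w$ by swapping the values $j$ and $k$) such that $\gamma_w \leq \gamma_{w'}$ in the inclusion order and $\O_{\gamma_{w'}} \subseteq \Hess(\x_{p,q},\m)$.

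The next step is to observe that this inequality is strict. Since $w' \neq w$ (they differ by the transposition $(j,k)$) and the assignment $w \mapsto \gamma_w$ is injective (by construction of the clans $\gamma_w$), we have $\gamma_{w'} \neq \gamma_w$, hence $\gamma_w < \gamma_{w'}$. On the other hand, $\O_{\gamma_{w'}} \subseteq \Hess(\x_{p,q},\m) = \ov{\O_{\gamma_w}}$ forces $\gamma_{w'} \leq \gamma_w$ by Definition~\ref{def.inclusion.order}. This is the desired contradiction, so $w^{-1}$ must avoid $312$.

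Finally, the second assertion that $w$ avoids $231$ follows immediately from the equivalence already recorded in the preliminaries, namely that $w$ avoids $231$ if and only if $w^{-1}$ avoids $312$. No substantive obstacle is expected: the argument is essentially a one-line invocation of Lemma~\ref{prop.312free}, and the only subtlety is making sure that the containment produced by that lemma is \emph{strict} in the inclusion order, which is guaranteed by the injectivity of $w \mapsto \gamma_w$.
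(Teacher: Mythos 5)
Your argument is correct and is exactly the intended deduction from Lemma~\ref{prop.312free} (the paper records the corollary without further proof precisely because this one‑line contrapositive argument is immediate). You correctly identify the only nontrivial point: the containment produced by Lemma~\ref{prop.312free} must be strict, which follows since the new permutation differs from $w$ by a nontrivial transposition and $w \mapsto \gamma_w$ is injective, after which the contradiction with $\gamma_{w'} \leq \gamma_w$ (forced by $\O_{\gamma_{w'}} \subseteq \Hess(\x_{p,q},\m) = \ov{\O_{\gamma_w}}$) and the antisymmetry of the inclusion order is clean.
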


There are $\cat_q$ elements $w \in \S_q$ avoiding $231$.  Let 
\[
\Clan_{p,q}^{231}:= \{ \gamma_w\mid w\in \S_q,\, \textup{$w$ avoids $231$}  \}.
\]
We prove below that this set of clans parameterize irreducible semisimple Hessenberg varieties of the form $\Hess(\x_{p,q}, \m)$. For each $w\in \S_q$ avoiding the pattern $231$, define a length $n=p+q$ Hessenberg vector $\m(w)$ by
$$
m(w)_i:=\begin{cases}\max \left\{w^{-1}(k)+p \mid k \leq i\right\} & i \leq q, \\ n & i>q .\end{cases}
$$
We can now state the main theorem of this section.

\begin{theorem} \label{thm.irreducible} For each $w\in \S_q$ avoiding the pattern $231$, the Hessenberg variety $\Hess(\x_{p,q},\m(w))$ is irreducible and equal to the closure of the $K$-orbit $\O_{\gamma_w}$.  Furthermore, every irreducible Hessenberg variety defined using the semisimple matrix $\x_{p,q}$ is of this form. 
\end{theorem}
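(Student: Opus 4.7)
The plan is to prove the theorem in two parts. For the forward direction, given a $231$-avoiding $w \in \S_q$, the inclusion $\O_{\gamma_w} \subseteq \Hess(\x_{p,q}, \m(w))$ is immediate from Proposition~\ref{prop.orbitsinhess}: the paired entries of $\gamma_w$ occur at positions $(i, p+w^{-1}(i))$ for $i \in [q]$, and $m(w)_i = p + \max_{k \leq i} w^{-1}(k) \geq p + w^{-1}(i)$ by construction. To upgrade this to the equality $\Hess(\x_{p,q}, \m(w)) = \ov{\O_{\gamma_w}}$, I will invoke the irreducibility criterion of Proposition~\ref{prop.irr.criterion.1} and argue that $\gamma_w$ is the unique maximal element, in the inclusion order, among clans $\gamma$ with $\O_\gamma \subseteq \Hess(\x_{p,q}, \m(w))$.

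The uniqueness argument begins by taking any such maximal clan $\gamma$. Because $\sigma$ and $\tau$ have no paired entries, both lie in $\Hess(\x_{p,q}, \m(w))$ by Proposition~\ref{prop.orbitsinhess}, and maximality gives $\sigma, \tau \leq \gamma$; Lemma~\ref{lemma.threeconditions} then forces $\gamma = \gamma_u$ for some $u \in \S_q$. The contrapositive of Lemma~\ref{prop.312free} forces $u^{-1}$ to avoid $312$, hence $u$ avoids $231$. The problem reduces at this point to the following key combinatorial claim, which I expect to be the main obstacle: \emph{if $u, w \in \S_q$ both avoid $231$ and $f_w(i) \geq u^{-1}(i)$ for all $i \in [q]$, where $f_w(i) := \max_{k \leq i} w^{-1}(k)$, then $\gamma_u \leq \gamma_w$ in the inclusion order.} By Lemma~\ref{lemma.w-statistics} and Lemma~\ref{lemma.ij-statistic}, this reduces to verifying the Gale-type dominance $|\{k \leq i : u^{-1}(k) > m\}| \leq |\{k \leq i : w^{-1}(k) > m\}|$ for all $i, m \in [q]$. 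The hypothesis together with the monotonicity of $f_w$ immediately yields the coordinate-wise bound $f_u \leq f_w$; the remaining task is to upgrade this to the full dominance using the $231$-avoidance of both $u$ and $w$. A natural route is through the classical bijection between $231$-avoiding permutations in $\S_q$ and Dyck paths (equivalently, length-$q$ Hessenberg vectors), under which $f_w$ corresponds to the Dyck path of $w$ and $f_u \leq f_w$ corresponds to nesting of paths; one can then propagate the dominance through the sets $u^{-1}([i])$ and $w^{-1}([i])$ by induction on the number of ascents distinguishing the two paths. Once this lemma is established, $\gamma_u \leq \gamma_w$ combined with the maximality of $\gamma_u$ forces $\gamma_u = \gamma_w$, so $\gamma_w$ is indeed the unique maximum in question.

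The converse direction then follows cleanly from Part I. Assuming $\Hess(\x_{p,q}, \m)$ is irreducible, Proposition~\ref{prop.irr.criterion.1} supplies a unique maximal $\gamma$ with $\Hess(\x_{p,q}, \m) = \ov{\O_\gamma}$; Corollary~\ref{cor.xpqorbitclosure} identifies $\gamma = \gamma_w$ for some $w \in \S_q$ (and $m_i = n$ for $i > q$), and Corollary~\ref{cor.312free} forces $w$ to avoid $231$. Applying Part I yields $\Hess(\x_{p,q}, \m(w)) = \ov{\O_{\gamma_w}} = \Hess(\x_{p,q}, \m)$, completing the proof.
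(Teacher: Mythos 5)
There is a genuine gap at the start of your uniqueness argument. You take a maximal clan $\gamma$ in the poset of clans whose orbits lie in $\Hess(\x_{p,q},\m(w))$ and assert that ``maximality gives $\sigma,\tau \leq \gamma$.'' This does not follow: a \emph{maximal} element of a poset need not dominate any particular element of that poset (it would if it were the unique \emph{maximum}, but that is precisely what you are trying to establish). The last sentence of Lemma~\ref{lemma.threeconditions} does give $\sigma,\tau\leq\gamma$, but only under the hypothesis $\ov{\O_\gamma}=\Hess(\x_{p,q},\m)$, which you cannot invoke here without circularity. The claim really can fail for Hessenberg varieties of this type: for $(p,q)=(2,2)$ and $\m=(2,4,4,4)$, the clan $1122$ is maximal among clans whose orbit lies in $\Hess(\x_{2,2},\m)$, yet $\sigma=++--\not\leq 1122$ since $\sigma(2;-)=0<1=1122(2;-)$. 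So the reduction ``every maximal clan has the form $\gamma_u$ with $u$ avoiding $231$'' is exactly what needs to be proved, and Lemma~\ref{lemma.threeconditions} alone does not deliver it.

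The paper sidesteps this by proving a stronger statement head-on: for \emph{every} clan $\gamma\in\Clan_{p,q}$ (not only those of the form $\gamma_u$), $\O_\gamma\subseteq\Hess(\x_{p,q},\m(w))$ implies $\gamma\leq\gamma_w$. By Lemma~\ref{lemma.w-statistics} this reduces to checking $\gamma(i,j)\leq\gamma_w(i,j)$ for $i\in[q]$, $j\in\{p+1,\ldots,n\}$, and the paper runs a minimal-counterexample argument in $i$, using Lemma~\ref{lemma.gamma.dif} to control $\gamma(i,j)-\gamma(i-1,j)$ and the technical Lemma~\ref{lemma.technical} (which is where $231$-avoidance of $w$ enters) to derive a contradiction from $\O_\gamma\subseteq\Hess(\x_{p,q},\m(w))$. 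You would need an argument of that shape: compare $\gamma$ directly to $\gamma_w$, not pass through $\gamma_u$. Separately, even inside your framework the ``key combinatorial claim'' comparing $\gamma_u$ and $\gamma_w$ is only sketched (an appeal to the Dyck-path bijection and an induction on ascents), not proved; that would also need to be filled in. Your converse direction is fine once the forward direction is repaired.
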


Our proof of~Theorem \ref{thm.irreducible} requires the following technical lemma.

\begin{lemma}\label{lemma.technical} Let $w\in \S_q$ be $231$-free and $\m=\m(w)$ the associated Hessenberg vector defined above.  Let $i \in [q]$ such that $w^{-1}(i)+p< m_i$. Then for all $j$ with $w^{-1}(i)+p \leq j< m_i$, $\gamma_w(i-1, j)= m_i-j$.
\end{lemma}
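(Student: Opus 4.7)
The plan is to translate the claimed identity into a counting statement about the permutation $w$ via Lemma~\ref{lemma.ij-statistic}, and then verify that statement by a short pattern-avoidance argument.

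First I would unpack the hypotheses. The condition $w^{-1}(i)+p<m_i$ forces $i\ge 2$, because when $i=1$ we would have $m_1=w^{-1}(1)+p$. Also the range $w^{-1}(i)+p\le j<m_i$ places $j$ in $\{p+1,\ldots,n\}$, so Lemma~\ref{lemma.ij-statistic} applies with $i$ replaced by $i-1$. Setting $r:=j-p$ and $M:=m_i-p=\max\{w^{-1}(k)\mid k\le i\}$, the hypothesis translates to $w^{-1}(i)\le r<M$, and the identity to be proved becomes
\[
\bigl|\{k\le i-1\mid w^{-1}(k)>r\}\bigr|=M-r.
\]
Since $w^{-1}(i)\le r$, the value $w^{-1}(i)$ is not counted in the left-hand side, so with $K:=\{w^{-1}(k)\mid k\le i\}$ the equation is equivalent to $|K\cap(r,q]|=M-r$. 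Because $M\in K$ and $M>r$, this reduces to showing that every integer $a$ with $r<a\le M$ belongs to $K$, i.e.\ that $w(a)\le i$ for every such $a$.

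Next I would establish this last claim by contradiction. Suppose some $a\in(r,M]$ satisfies $w(a)>i$. Let $k\le i$ be such that $w^{-1}(k)=M$; since $M>w^{-1}(i)$, we have $k\neq i$, hence $k<i$. If $a=M$ then $w(a)=k<i$, contradicting $w(a)>i$; therefore $a<M$. Combined with $w^{-1}(i)\le r<a$, we obtain three strictly increasing positions $w^{-1}(i)<a<M$ carrying the values $i$, $w(a)$, and $k$, which satisfy $k<i<w(a)$. This triple is an occurrence of the pattern $231$ in $w$, contradicting the hypothesis that $w$ is $231$-free. Hence no such $a$ exists and the identity follows.

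The main obstacle is locating the correct pattern: one must combine the strict inequality $w^{-1}(i)<M$ (which guarantees $k<i$, providing strictness in the smallest coordinate of the pattern) with $r\ge w^{-1}(i)$ (which forces $w^{-1}(i)$ to lie strictly to the left of the candidate bad position $a$) to assemble the $231$ instance at the positions $w^{-1}(i)<a<M$. Once the pattern is spotted, everything else is bookkeeping with the definitions of $m_i$, $M$, and $K$.
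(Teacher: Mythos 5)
Your proof is correct and takes essentially the same route as the paper: both pin down $M=m_i-p$ as $w^{-1}(a)$ for some $a<i$, unpack $\gamma_w(i-1,j)$ via Lemma~\ref{lemma.ij-statistic}, and obtain the count $M-r$ by a pattern-avoidance contradiction. The only difference is cosmetic: you exhibit a $231$ instance in $w$ at positions $w^{-1}(i)<a<M$, while the paper exhibits the equivalent $312$ instance in $w^{-1}$.
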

\begin{proof} By Lemma~\ref{lemma.ij-statistic}, $\gamma_w(i-1, j)=\left|\left\{k<i \mid w^{-1}(k)>j-p\right\}\right|$.
By definition of the Hessenberg vector $\m$, there exists $a<i$ such that $m_i=w^{-1}(a)+p$.
Our assumptions imply that
$$
j<m_i=w^{-1}(a)+p \Rightarrow j-p<w^{-1}(a)\quad \text{and} \quad w^{-1}(i)<w^{-1}(a).
$$
Let $k \in\{1,2, \ldots, q\}$ such that $j-p<w^{-1}(k) \leq w^{-1}(a)$.
Note that $j-p<w^{-1}(k)$ implies $w^{-1}(i)<w^{-1}(k)$ and $k \neq i$.

If $k>i$ then $w^{-1}(k)<w^{-1}(a) $ and $w^{-1}$ contains $w^{-1}(a) w^{-1}(i) w^{-1}(k)$ as a subsequence, contradicting the fact that $w^{-1}$ is $312$-free. Thus, we must have $k<i$. This shows that
$$
\left\{k \in[q] \mid {j-p}<w^{-1}(k) \leq w^{-1}(a)\right\} \subseteq\left\{k<i \mid w^{-1}(k)>j-p\right\}.
$$
The sets are actually equal, since $$w^{-1}(a)=\max \left\{w^{-1}(1), \ldots, w^{-1}(i)\right\}.$$ 
We conclude $\gamma_w(i-1, j)=w^{-1}(a)-(j-p)=m_i-j$, as desired. 
\end{proof}

\begin{proof}[Proof of Theorem~\ref{thm.irreducible}] By Proposition~\ref{prop.irr.criterion.1}, Corollary~\ref{cor.xpqorbitclosure}, and Corollary~\ref{cor.312free}, every irreducible Hessenberg variety defined using $\x_{p,q}$ is equal to $\overline{\O_{\gamma_w}}$ for some $\gamma_w\in \Clan_{p,q}^{231}$.
To complete the proof we show that $\Hess(\x_{p,q}, \m(w)) = \ov{\O_{\gamma_w}}$. It follows immediately from the definition of the Hessenberg vector $\m(w)$ and Proposition~\ref{prop.orbitsinhess} that $\O_{\gamma_w} \subset \Hess(\x_{p,q},\m(w))$.  It therefore suffices to show that if $\O_\gamma\subset \Hess(\x_{p,q},\m(w))$ for some $\gamma=c_1c_2\cdots c_n\in \Clan_{p,q}$, then $\gamma \leq \gamma_w$.

By Theorem~\ref{thm.inclusion} and Lemma~\ref{lemma.w-statistics}, we must prove $\gamma(i, j) \leq \gamma_w(i, j)$ for all $i \in [q]$ and $j \in\{p+1, \ldots, n\}$.
Seeking a contradiction, suppose $\gamma(i, j)>\gamma_w(i, j)$. We may assume $i$ is minimal with respect to this property. We write $\m=\m(w)$ throughout, to simplify notation.

Consider first the case $i=1$. Note that $\gamma(1, j), \gamma_w(1, j) \in\{0,1\}$ so we must have $\gamma(1, j)=1$ and $\gamma_w(1, j)=0$. The latter implies $w^{-1}(1)+p \leq j$ by Lemma~\ref{lemma.ij-statistic}. On the other hand, $\gamma(1, j)=1$ implies by Lemma~\ref{lemma.gamma.dif} that there exists $t>j$ such that $c_1=c_t$. Now $m_1=w^{-1}(1)+p<t$, contradicting the assumption that $\O_\gamma \subset \Hess(\x_{p, q}, \m)$.

Now assume $i>1$. We have both $\gamma(i-1, j) \leq \gamma_w(i-1, j)$ and $\gamma(i, j)>\gamma_w(i, j)$. By Lemma~\ref{lemma.gamma.dif} this can only be the case if
\begin{eqnarray}\label{eqn1}
\gamma(i, j)-\gamma(i-1, j)=1,
\end{eqnarray}
and
\begin{eqnarray}\label{eqn2}
\gamma_w(i, j)-\gamma_w(i-1, j)=0.  
\end{eqnarray}
We may furthermore conclude that
\begin{eqnarray}\label{eqn3}
\gamma(i-1, j)=\gamma_w(i-1, j)
\end{eqnarray}
since otherwise, $\gamma(i-1, j)<\gamma_w(i-1, j)$ and
$$
\gamma(i, j)=\gamma(i-1, j)+1 \leq \gamma_w(i-1, j)=\gamma_w(i, j)_{,}
$$
contradicting our assumption that $\gamma(i, j)>\gamma_w(i, j)$.

By Lemma~\ref{lemma.gamma.dif}, Equation~\eqref{eqn1} implies that there exists $t>j$ such that $c_i=c_t$.
From equation~\eqref{eqn2}, we get that
\begin{eqnarray*}
&&\left|\left\{w^{-1}(1), \ldots, w^{-1}(i)\right\} \cap\{j-p+1, \ldots, q\}\right| = \\&& \quad \quad \quad\quad\quad\quad\quad \left|\left\{w^{-1}(1), \ldots, w^{-1}(i-1)\right\} \cap\{j-p+1, \ldots, q\}\right|
\end{eqnarray*}
so $w^{-1}(i) \leq j-p$, implying $w^{-1}(i)+p \leq j$.

If $m_i=w^{-1}(i)+p$ then we have $m_i<t$, a contradiction to $\O_\gamma \subset \Hess(\x_{p, q}, \m)$. We obtain the same contradiction if $j \geq m_i$ so we may now assume both $m_i>w^{-1}(i)+p$ and $j<m_i$. By Lemma~\ref{lemma.technical} and Equation~\eqref{eqn3}, $\gamma(i-1, j)=m_i-j$.
This implies there are precisely $m_i-j$ pairs $(a<b)$ such that $a \leq i-1<j<b$ and $c_a=c_b$.
As $\O_{\gamma} \subseteq \Hess(\x_{p, q}, \m)$ we have $b \leq m_a \leq m_i$ in each case.
There are only $m_i-j$ values $b$ such that $j<b \leq m_i$, and each such position in the clan $\gamma$ is occupied by $c_b$ for one of the pairs counted by $\gamma(i-1, j)$. This forces $t>m_i$, another contradiction. We conclude $\Hess(\x_{p,q},\m)=\ov{\O_{\gamma_w}}$, as desired.
\end{proof}

The dimension of the $K$-orbit $\O_\gamma$ associated to $\gamma=c_1c_2\cdots c_n\in \Clan_{p,q}$ is 
$$
\dim \O_\gamma = \ell(\gamma)+\frac{p(p-1)}{2}+\frac{q(q-1)}{2},
$$ 
where
$$
\ell(\gamma): = \sum_{\substack{c_i=c_j\in \N \\ i<j}} \left(\, j-i-|\{ k\in \N \mid c_s=c_t=k \textup{ for } s<i<t<j \}|\, \right)
$$
by~\cite{Yamamoto}. We apply this formula to compute the dimension of the irreducible Hessenberg variety $\Hess(\x_{p,q}, \m(w))$. We first require the following technical lemma.

\begin{lemma} \label{lem.lengthvshessenbergvector}
Given $w \in \S_n$, define a sequence ${\mathbf h}={\mathbf h}(w):=(h_1,\ldots,h_n)$ by
$$
h_i:=\max\{w(k) \mid k \leq i\}.
$$
If $w$ is $312$-free, then
$$
\ell(w)=\sum_{i=1}^n (h_i-i).
$$
\end{lemma}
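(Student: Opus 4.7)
The plan is to recast both sides as counts of pairs, then set up an explicit bijection in which $312$-avoidance appears at precisely one step.

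First, for any permutation $w$ and any $j$, the set $\{w(1),\dots,w(j)\}$ is a $j$-element subset of $[h_j]$, so exactly $h_j - j$ elements of $[h_j]$ must occur at positions strictly after $j$. Setting $i_k := \min\{i : w(i) \ge k\}$, one has $k \le h_j$ if and only if $i_k \le j$, and $i_k \le w^{-1}(k)$ for every $k$. Summing over $j$ gives
$$
\sum_{j=1}^{n}(h_j - j) \;=\; \bigl|\{(j,k) : i_k \le j < w^{-1}(k)\}\bigr|.
$$

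The main step is to show that the map $\Phi(j,k) := (j, w^{-1}(k))$ is a bijection from this set of pairs onto $\mathrm{Inv}(w) := \{(i,i') : i < i',\; w(i) > w(i')\}$. The candidate inverse is $\Psi(i,i') := (i, w(i'))$; verifying that $\Psi$ lands in the domain of $\Phi$ is immediate (from $w(i) > w(i')$ one reads off both $i < i'=w^{-1}(w(i'))$ and $i_{w(i')} \le i$), and $\Phi\circ\Psi$, $\Psi\circ\Phi$ are trivially identities. The substantive content is to check that $\Phi(j,k)$ really is an inversion, i.e. that $w(j) > k$. Since $j < w^{-1}(k)$ gives $w(j) \neq k$, it suffices to rule out $w(j) < k$. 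If $j = i_k$ this is immediate from $w(i_k) \ge k$. If $j > i_k$, then $i_k < w^{-1}(k)$ forces $w(i_k) > k$, and assuming $w(j) < k$ would produce a $312$-pattern at positions $i_k < j < w^{-1}(k)$ carrying values $(w(i_k),\, w(j),\, k)$, i.e. big, small, medium, contradicting the hypothesis.

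The forbidden-pattern step is the only place where $312$-avoidance is used and is the main obstacle; everything else is bookkeeping. Once the bijection is in place, summing yields $\ell(w) = \sum_{j=1}^n(h_j - j)$.
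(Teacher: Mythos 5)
Your proof is correct. Both your argument and the paper's ultimately establish the refined claim that, for each fixed position $j$, the number of inversions $(j,i')$ with first coordinate $j$ equals $h_j - j$; but the two proofs of this refined claim use genuinely different techniques. The paper proceeds by induction on $n$, deleting the largest value from the one-line notation and observing that $312$-avoidance forces all values after the position of $n$ to be decreasing, so those positions contribute the maximal number of inversions. You instead construct an explicit bijection $\Phi(j,k) = (j, w^{-1}(k))$ between the pairs $\{(j,k) : i_k \le j < w^{-1}(k)\}$ and the inversion set, with $312$-avoidance invoked precisely once to show $\Phi$ lands in $\mathrm{Inv}(w)$ (to rule out $w(j) < k$). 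Your bijection in fact preserves the first coordinate $j$, so it specializes to a fiberwise bijection proving the same refined statement, but without induction. The bijective approach is arguably more illuminating because it makes visible exactly where and how $312$-avoidance enters, and it also gives the reverse direction essentially for free (for a non-$312$-avoiding $w$ the map $\Phi$ fails to be well-defined at the offending pattern); the paper's inductive proof is shorter to write down. Both are valid and roughly equally elementary.
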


\begin{proof}
Write $w=w(1)\ldots w(n)$ in one-line notation and find $k$ such that $w(k)=n$.  If $n>1$ let $w^\prime$ be obtained from $w$ by erasing $n$ from the given one-line representation and let ${\mathbf h}^\prime$ be obtained from $w^\prime$ as ${\mathbf h}$ was obtained from $w$.   
For $i \in [n]$, set
$$
\invv_i(w):=\mid \{j>i\mid w(j)<w(i)\} \mid,
$$ 
and define $\invv_i(w^\prime)$ similarly.  

We will show by induction on $n$ that $\invv_i(w)=h_i-i$ for every $i$, from which the lemma follows. The case $n=1$ is trivial. Assume $n>1$. We observe that if $i \geq k$ then $w_j<w_i$ for all $j>i$ (since $w$ is $312$-free) hence $\invv_i(w)=n-i=h_i-i$.  If $i<k$ then
$$
\invv_i(w)=\invv_i(w^\prime)=h^\prime_i-i=h_i-i,
$$
the second equality following from the inductive hypothesis.
\end{proof}

Recall that $\pi_{\m(w)}$ denotes the Dyck path associated with the Hessenberg vector $\m(w)$ as in Section~\ref{S:Notation}. Our work above shows that $\dim \Hess(\x_{pq}, \m(w))$ is given by the area of $\pi_{\m(w)}$.

\begin{corollary} \label{cor.dimension} For each $w\in \S_q$ avoiding the pattern $231$, the Hessenberg variety $\Hess(\x_{p,q}, \m(w))$ is irreducible of dimension
\begin{eqnarray}\label{eqn.dimension}
\dim \Hess(\x_{p,q},\m(w)) = \ell(w) + pq+ \frac{p(p-1)}{2} = \area(\pi_{\m(w)}).
\end{eqnarray}
\end{corollary}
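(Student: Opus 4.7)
The irreducibility half of the corollary is immediate from Theorem~\ref{thm.irreducible}, which gives $\Hess(\x_{p,q},\m(w)) = \overline{\O_{\gamma_w}}$, so $\dim \Hess(\x_{p,q},\m(w)) = \dim \O_{\gamma_w}$. The plan is therefore to establish two independent equalities: first, that Yamamoto's formula for $\dim\O_{\gamma_w}$ evaluates to $\ell(w) + pq + \tfrac{p(p-1)}{2}$; and second, that the area of the Dyck path $\pi_{\m(w)}$ equals this same quantity.

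For the first equality, I would identify the matched pairs in $\gamma_w$. By construction of $\gamma_w$, the natural-number pairs are exactly $(k, p+w^{-1}(k))$ for $k\in [q]$. The only potential enclosing pair of $(k, p+w^{-1}(k))$ in the sense required by Yamamoto's formula is $(k', p+w^{-1}(k'))$ with $k' < k$ and $w^{-1}(k') < w^{-1}(k)$ (the condition $k < p+w^{-1}(k')$ is automatic since $k\le q \le p$). Thus
\[
\ell(\gamma_w) = \sum_{k=1}^q \bigl( p + w^{-1}(k) - k - |\{k' < k : w^{-1}(k') < w^{-1}(k)\}| \bigr).
\]
The sum $\sum_k(w^{-1}(k) - k)$ vanishes, and rewriting the final term using $(k-1) - |\{k' < k : w^{-1}(k') < w^{-1}(k)\}| = |\{k' < k : w^{-1}(k') > w^{-1}(k)\}|$ yields
\[
\ell(\gamma_w) = pq - \tfrac{q(q-1)}{2} + \ell(w^{-1}) = pq - \tfrac{q(q-1)}{2} + \ell(w).
\]
Adding $\tfrac{p(p-1)}{2} + \tfrac{q(q-1)}{2}$ as dictated by Yamamoto's formula gives the first equality.

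For the second equality, I would split the sum $\area(\pi_{\m(w)}) = \sum_{i=1}^n (m(w)_i - i)$ at $i=q$. The tail $i > q$ contributes $\sum_{i=q+1}^n (n-i) = \tfrac{p(p-1)}{2}$. For the head $i\in[q]$, writing $h_i := \max\{w^{-1}(k) : k \le i\}$, we have $m(w)_i = h_i + p$, so the head contributes $pq + \sum_{i=1}^q (h_i - i)$. Since $w$ is $231$-free, $w^{-1}$ is $312$-free, so Lemma~\ref{lem.lengthvshessenbergvector} applied to $w^{-1}$ gives $\sum_{i=1}^q (h_i - i) = \ell(w^{-1}) = \ell(w)$, and the second equality follows.

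The only real bookkeeping obstacle is the computation of $\ell(\gamma_w)$: one must correctly interpret which matched pairs contribute the nesting subtraction in Yamamoto's formula, and then manipulate the resulting double sum into the inversion count of $w^{-1}$. Everything else is a direct invocation of earlier results in the paper.
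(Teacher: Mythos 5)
Your proposal is correct and follows essentially the same route as the paper's proof: compute $\ell(\gamma_w)$ from Yamamoto's formula by identifying the matched pairs $(k, p+w^{-1}(k))$ and counting nestings via inversions of $w^{-1}$, then split the area sum at $i=q$ and apply Lemma~\ref{lem.lengthvshessenbergvector} to $w^{-1}$. The bookkeeping details (the automatic inequality $k \le q \le p < p + w^{-1}(k')$, and the rewrite of the noninversion count into $\ell(w^{-1})$) are handled correctly.
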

\begin{proof}  
Recall $c_i^w=c_j^w\in \N$ if and only if $j=w^{-1}(i)+p$. Keeping also in mind that $\ell(w)=\ell(w^{-1})$, we therefore have 
\[
|\{k\in \N \mid c_s=c_t=k \textup{ for } s<i<t<w^{-1}(i)+p \}| = |\{ s<i \mid w^{-1}(s)<w^{-1}(i) \}|,
\]
and thus 
\begin{eqnarray*}
\sum_{i\in [q]} |\{k\in \N \mid c_s=c_t=k \textup{ for } s<i<t<j \}|  = \ell(w_0) - \ell(w^{-1})=\frac{q(q-1)}{2}-\ell(w).
\end{eqnarray*}
We now obtain
\begin{eqnarray*}
\ell(\gamma_w) &=& \sum_{i\in [q]} \left( w^{-1}(i)-i +p \right) - \sum_{i\in [q]} |\{k\in \N \mid c_s=c_t=k \textup{ for } s<i<t<j \}|\\
&=& pq - \left( \frac{q(q-1)}{2}-\ell(w)\right) = \ell(w)+ pq -  \frac{q(q-1)}{2}.
\end{eqnarray*}
As $\dim \Hess(\x_{p,q},\m(w)) = \dim \O_{\gamma_w}$ by Theorem~\ref{thm.irreducible}, this proves the first equality in~\eqref{eqn.dimension}. 

To prove the second we observe first that if $\m(w)=(m_1,\ldots,m_n)$ then
\begin{eqnarray*}
\area(\pi_{m(w)}) & = & \sum_{i=1}^n (m_i-i) \\ & = & pq+\sum_{i=1}^q \max\{w^{-1}(k) \mid k \leq i\}- \sum_{i=1}^q i+pn-\sum_{i=q+1}^n i \\ & = & p^2+2pq-{{p+q+1} \choose {2}}+{{q+1} \choose {2}}+\sum_{i=1}^q \max\{w^{-1}(k) \mid k \leq i\}-\sum_{i=1}^q i \\ & = & pq+\frac{p(p-1)}{2}+\sum_{i=1}^q \max\{w^{-1}(k) \mid k \leq i\}- \sum_{i=1}^q i.
\end{eqnarray*}
We complete the proof by applying Lemma \ref{lem.lengthvshessenbergvector} to~$w^{-1}$.
\end{proof}

\begin{remark}
It follows from Corollary~\ref{cor.dimension} and the seminal work~\cite{demariprocesishayman} of De Mari, Procesi, and Shayman on Hessenberg varieties that if $\Hess(\x_{p,q},\m)$ is irreducible and ${\mathsf s}$ is an $n \times n$ regular semisimple matrix, then
$$
\dim\Hess(\x_{p,q},\m)=\dim\Hess({\mathsf s},\m).
$$
Indeed, in the case of a regular semisimple element ${\mathsf s}$, it is easy to see from~\cite[Theorem 6]{demariprocesishayman} that the dimension of $\Hess({\mathsf s},\m)$ is precisely the area of $\pi_\m$.
\end{remark}


\section{W-sets and cohomology classes}\label{S:Wsets}

We now turn our attention to computing the $W$-sets introduced in Section~\ref{ss.weak-order} above for the clans $\gamma_w$ with $w\in \S_q$.  Our work below shows that the restriction of the weak order to the interval $[\gamma_e, \gamma_0]$ in $\Clan_{p,q}$ can be identified with the two-sided weak order on $\S_q$ (see Theorem~\ref{thm.weak-order-Sq} below).  As a result, we give a concrete formula for the class $[\overline{\O}_{\gamma_w}]$ and, in particular, the class of any Hessenberg variety $\Hess(\x_{p,q}, \m(w))$.  Finally, as an application of our results, we prove that the product of $[\overline{\O}_{\gamma_w}]$ with any Schubert divisor is a multiplicity-free sum of Schubert polynomials.   

Recall that \textit{left weak order} $\leq_L$ on the symmetric group $\S_q$ is the partial order defined by the covering relations 
\[
w <_L s_iw  \textup{ where } i\in [n-1] \textup{ is such that } w^{-1}(i)<w^{-1}(i+1).
\]
The left multiplication by $s_i$ interchanges the order of $i$ and $i+1$ in the one-line notation for $w$.  For example, $51324 <_L 52314$.
Similarly, the \textit{right weak order} $\leq_R$ on $\S_q$ is the partial order defined by the covering relations
\[
w <_R ws_i  \textup{ where } i\in [n-1] \textup{ is such that }  w(i)<w(i+1).
\]
The right multiplication by $s_i$ interchanges the entries in positions $i$ and $i+1$ of the one-line notation for $w$. For example, $51324 <_R 53124$. 

We call the partial order $\preceq$ on $\S_q$ that is generated by the covering relations of both of the left and the right weak orders the \textit{two-sided weak order} on $\S_q$.

\begin{example}
    In Figure~\ref{F:S4}, we depict the two-sided weak order on $\S_4$.
    The blue (double) edges correspond to the cover relations that are admitted by both of the orders $\leq_L$ and $\leq_R$. The ordinary edges correspond to a covering relation of either $\leq_R$ or $\leq_L$, but not both.  Our figure shows that the two-sided weak order on $\S_4$ is not isomorphic to the Bruhat (i.e., inclusion) order; for example, $s_1s_2s_1 = 3214 \leq s_2s_1s_3s_2 = 3412$ in Bruhat order but Figure~\ref{F:S4} shows that $3214$ is not below $3412$ in the two-sided weak order.
\begin{figure}[htp]
\begin{center}

\scalebox{.65}{
\begin{tikzpicture}[scale=.4]

\node at (0,0) (a) {$1234$};

\node at (-8,5) (b1) {$1243$};
\node at (0,5) (b2) {$1324$};
\node at (8,5) (b3) {$2134$};

\node at (-16,10) (c1) {$1342$};
\node at (-8,10) (c2) {$1423$};
\node at (0,10) (c3) {$2143$};
\node at (8,10) (c4) {$2314$};
\node at (16,10) (c5) {$3124$};

\node at (-20,15) (d1) {$1432$};
\node at (-12,15) (d2) {$2341$};
\node at (-4,15) (d3) {$2413$};
\node at (4,15) (d4) {$3142$};
\node at (12,15) (d5) {$3214$};
\node at (20,15) (d6) {$4123$};

\node at (-16,20) (e1) {$2431$};
\node at (-8,20) (e2) {$3241$};
\node at (0,20) (e3) {$3412$};
\node at (8,20) (e4) {$4132$};
\node at (16,20) (e5) {$4213$};

\node at (-8,25) (f1) {$3421$};
\node at (0,25) (f2) {$4231$};
\node at (8,25) (f3) {$4312$};

\node at (0,30) (g) {$4321$};

\draw[-, double, thick, blue] (a) to (b1);
\draw[-, double, thick, blue] (a) to (b2);
\draw[-, double, thick, blue] (a) to (b3);

\draw[-, thick] (b1) to (c1);
\draw[-, thick] (b1) to (c2);
\draw[-, double, thick, blue] (b1) to (c3);

\draw[-, thick] (b2) to (c1);
\draw[-, thick] (b2) to (c2);
\draw[-, thick] (b2) to (c4);
\draw[-, thick] (b2) to (c5);

\draw[-, double, thick, blue] (b3) to (c3);
\draw[-, thick] (b3) to (c4);
\draw[-, thick] (b3) to (c5);

\draw[-, double, thick, blue] (c1) to (d1);
\draw[-, thick] (c1) to (d2);
\draw[-, thick] (c1) to (d4);

\draw[-, double, thick, blue] (c2) to (d1);
\draw[-, thick] (c2) to (d3);
\draw[-, thick] (c2) to (d6);

\draw[-, thick] (c3) to (d3);
\draw[-, thick] (c3) to (d4);

\draw[-, thick] (c4) to (d2);
\draw[-, thick] (c4) to (d3);
\draw[-, double, thick, blue] (c4) to (d5);

\draw[-, thick] (c5) to (d4);
\draw[-, double, thick, blue] (c5) to (d5);
\draw[-, thick] (c5) to (d6);

\draw[-, thick] (d1) to (e1);
\draw[-, thick] (d1) to (e4);

\draw[-, double, thick, blue] (d2) to (e1);
\draw[-, double, thick, blue] (d2) to (e2);

\draw[-, thick] (d3) to (e1);
\draw[-, thick] (d3) to (e3);
\draw[-, thick] (d3) to (e5);

\draw[-, thick] (d4) to (e2);
\draw[-, thick] (d4) to (e3);
\draw[-, thick] (d4) to (e4);

\draw[-, thick] (d5) to (e2);
\draw[-, thick] (d5) to (e5);

\draw[-, double, thick, blue] (d6) to (e4);
\draw[-, double, thick, blue] (d6) to (e5);

\draw[-, thick] (e1) to (f1);
\draw[-, thick] (e1) to (f2);
\draw[-, thick] (e2) to (f1);
\draw[-, thick] (e2) to (f2);

\draw[-, double, thick, blue] (e3) to (f1);
\draw[-, double, thick, blue] (e3) to (f3);

\draw[-, thick] (e4) to (f2);
\draw[-, thick] (e4) to (f3);

\draw[-, thick] (e5) to (f2);
\draw[-, thick] (e5) to (f3);

\draw[-, double, thick, blue] (f1) to (g);
\draw[-, double, thick, blue] (f2) to (g);
\draw[-, double, thick, blue] (f3) to (g);

\end{tikzpicture}
}
\caption{The two-sided weak order on $\S_4$.}\label{F:S4}
\end{center}
\end{figure}
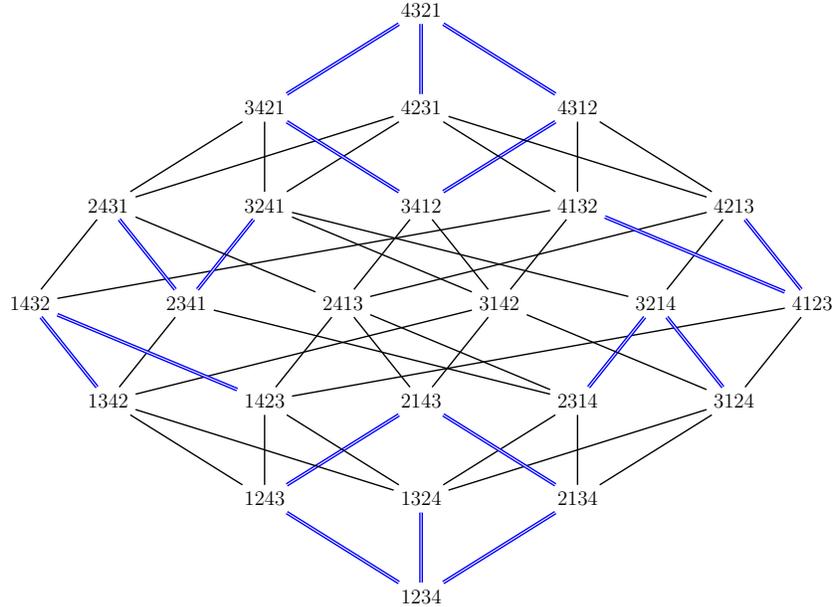

\end{example}

The first main result of this section is the following theorem.

\begin{theorem}\label{thm.weak-order-Sq}
The restriction of the weak order on the interval of clans
\[
[\gamma_e, \gamma_0]= \{\gamma_w \mid w\in \S_q\}
\]
is isomorphic to the two-sided weak order on $\S_q$. 
\end{theorem}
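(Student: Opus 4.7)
The strategy is to match the cover relations on both sides, and then use the fact that the weak order refines the inclusion order to rule out any extra relations coming from chains that leave the interval.

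First, using the matching picture of $\gamma_w$ (strands connecting each $i\in [q]$ to $p+w^{-1}(i)$, with $+$ signs in positions $q+1,\dots,p$), I would enumerate the weak-order covers $\gamma_w\to \tau$ using Figure~\ref{F:coverrelations}. Moves of Types IA and IB at interior positions either do not apply (since in $\gamma_w$ there are no adjacent ``opposing'' strand endpoints of the kind required by IB and no adjacent $\pm\mp$ signs), or else they swap a strand endpoint with a $+$ at $(q,q+1)$ or $(p,p+1)$, producing a clan $\tau$ that lies outside $[\gamma_e,\gamma_0]$ by the conditions of Lemma~\ref{lemma.threeconditions}. Type II never applies, because $\gamma_w$ contains only $+$ signs. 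The only covers staying inside $[\gamma_e,\gamma_0]$ are Type IC1 at $(i,i+1)$ with $i\in[q-1]$ and Type IC2 at $(p+k,p+k+1)$ with $k\in[q-1]$. A short crossing-versus-nesting calculation shows that IC1 applies exactly when $w^{-1}(i)<w^{-1}(i+1)$ and sends $\gamma_w$ to $\gamma_{s_iw}$, while IC2 applies exactly when $w(k)<w(k+1)$ and sends $\gamma_w$ to $\gamma_{ws_k}$. Thus the direct weak-order covers inside $[\gamma_e,\gamma_0]$ are in bijection with the left and right weak order covers at $w$ in $\S_q$, i.e., with the covers of the two-sided weak order.

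Second, to upgrade this Hasse-diagram match to a poset isomorphism, I would show that every weak-order chain between two elements of $[\gamma_e,\gamma_0]$ stays inside the interval. Since $\gamma\prec\tau$ in the weak order implies $\ov{\O_\gamma}\subseteq \ov{\O_\tau}$, i.e., $\gamma\le \tau$ in the inclusion order, any covering chain $\gamma_w=\gamma^{(0)}\prec \gamma^{(1)}\prec\cdots\prec\gamma^{(k)}=\gamma_{w'}$ with both endpoints in $[\gamma_e,\gamma_0]$ has each intermediate term satisfying $\gamma_e\le \gamma_w\le\gamma^{(i)}\le \gamma_{w'}\le \gamma_0$. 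By Lemma~\ref{lemma.w-statistics}, this forces $\gamma^{(i)}=\gamma_{w^{(i)}}$ for some $w^{(i)}\in \S_q$, and by the first step the chain corresponds stepwise to a two-sided weak order chain $w=w^{(0)}<w^{(1)}<\cdots<w^{(k)}=w'$ in $\S_q$. Together with the converse, which is immediate from the cover correspondence, this yields the desired poset isomorphism.

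The main obstacle is the local case analysis of the moves in Figure~\ref{F:coverrelations}. In particular, distinguishing crossings from nestings in the matching picture of $\gamma_w$ and translating each configuration into the correct inequality on $w^{-1}(i)$ or $w(k)$ requires careful attention to the drawing conventions (arc height versus endpoint position, direction of travel of each strand); once this bookkeeping is set up the verification is routine, but before then it is easy to get the source and target of IC1/IC2 backwards and end up with an order-reversing correspondence rather than an isomorphism.
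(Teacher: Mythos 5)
Your proposal follows the paper's approach: identify the cover relations in $[\gamma_e,\gamma_0]$ as exactly Types IC1 and IC2 (the paper's Lemma~\ref{lemma.covers}), then verify that IC1 at $(i,i+1)$ encodes the left-weak cover $w\lessdot s_iw$ and IC2 at $(p+k,p+k+1)$ encodes the right-weak cover $w\lessdot ws_k$, so that the Hasse diagrams match. This is the content of the paper's proof, and your cover analysis agrees with it.

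Your second step is a genuine addition: you explicitly verify that if $\gamma_w\preceq\gamma_{w'}$ in the full weak order on $\Clan_{p,q}$, then any saturated chain realizing this relation remains inside $[\gamma_e,\gamma_0]$, by observing that $\preceq$ refines the inclusion order $\leq$ and invoking Lemma~\ref{lemma.w-statistics} to conclude every intermediate clan has the form $\gamma_{w^{(i)}}$. The paper passes over this point silently; the theorem as stated concerns the restricted order relation (not merely the subgraph of covers inside the interval), and without your remark one must argue that the two coincide. So your proof is, if anything, slightly more careful than the paper's, while taking the same route.

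One small imprecision: you write that applying an IA move at $(q,q+1)$ or $(p,p+1)$ would ``produce a clan $\tau$ outside $[\gamma_e,\gamma_0]$.'' In fact IA1 and IA2 simply cannot be applied to $\gamma_w$ at these positions — the local configuration at $(q,q+1)$ is a strand \emph{left} endpoint followed by a sign, and at $(p,p+1)$ a sign followed by a strand \emph{right} endpoint, neither of which is the source configuration of IA1 or IA2 (those require a right endpoint followed by a sign, or a sign followed by a left endpoint). These local configurations are the \emph{targets} of IA moves, whose sources would necessarily have a sign in $[q]$ or in $\{p+1,\dots,n\}$ and so lie outside the interval. The conclusion (no IA covers inside the interval) is the same, but the mechanism you describe is backwards: the moves don't carry $\gamma_w$ out of the interval, they simply don't apply to $\gamma_w$.
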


To begin, we prove that the restriction of the weak order to the interval $[\gamma_e, \gamma_0]$ is generated by only two of the cover relations described in Section~\ref{ss.weak-order} (cf.~Figure~\ref{F:coverrelations}).

\begin{lemma}\label{lemma.covers} Every cover relation of the weak order in the interval $[\gamma_e, \gamma_0]$ is of type IC1 or IC2.
\end{lemma}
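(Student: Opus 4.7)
The plan is to enumerate the six possible cover relation types depicted in Figure~\ref{F:coverrelations} and show that only types IC1 and IC2 can yield a cover between two clans $\gamma_w, \gamma_{w'}$ both lying in the interval $[\gamma_e, \gamma_0]$. Recall that each such clan has the following rigid structure as a charged matching: positions $1, \ldots, q$ contain strand starts (labeled $1, 2, \ldots, q$); positions $q+1, \ldots, p$ contain $+$ signs; and positions $p+1, \ldots, n$ contain strand ends. In particular, no $-$ sign appears in $\gamma_w$.

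Type II cover relations require a pair of opposite charges $\pm, \mp$ at consecutive positions, and are therefore immediately excluded. For types IA1 and IA2, a sign must sit next to a strand endpoint; in $\gamma_w$ this adjacency can only happen at the position pair $(q, q+1)$ or $(p, p+1)$, and only when $p > q$. Applying either move at such a position would place a $+$ sign in a position belonging to $[q]$ or to $\{p+1, \ldots, n\}$ of the resulting clan, violating the structural requirements for the new clan to be of the form $\gamma_{w'}$.

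Type IB requires two disjoint strands at consecutive positions $i$ and $i+1$; in the arc-diagram sense, disjointness forces the strand endpoint at $i$ to have its partner to the left of $i$ (so $i$ must be a strand end, hence $i \geq p+1$) while the strand endpoint at $i+1$ has its partner to the right of $i+1$ (so $i+1$ must be a strand start, hence $i+1 \leq q$). Combined with $p \geq q$, this forces $p+1 \leq i \leq q-1 \leq p-1$, which is impossible. Hence the only remaining cover types are IC1 and IC2, as claimed.

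The essence of the argument is a rigid-structure check: the zoning of the positions of $\gamma_w$ into the three disjoint blocks $[q]$, $\{q+1, \ldots, p\}$, $\{p+1, \ldots, n\}$ rules out every other move type, with no difficult calculations needed. The only delicate aspect is correctly identifying the possible locations of the relevant adjacencies (sign–endpoint pairs for IA1, IA2 and disjoint strand configurations for IB), which in each case reduces to direct arithmetic against the hypothesis $p \geq q$.
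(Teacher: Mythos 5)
Your proof is correct and follows essentially the same approach as the paper's: a case analysis on the six move types from Figure~\ref{F:coverrelations}, using the rigid block structure of the charged matching for $\gamma_w$ (strand starts in $[q]$, plus signs in $\{q+1,\ldots,p\}$, strand ends in $\{p+1,\ldots,n\}$, no minus signs) to rule out types IA1, IA2, IB, and II. The paper's exclusion of type IB is phrased via the observation that every pair of arcs is nested or crossing because each arc joins $i$ to $w^{-1}(i)+p$, whereas you argue positionally at the consecutive vertices $(i,i+1)$; the two arguments are logically equivalent.
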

\begin{proof} Let $\gamma_w=c_1c_2\cdots c_n \in \mathbf{Clan}_{p,q}$ for some $w\in \S_q$. 
We have by definition that
\[
c_1\cdots c_p = 12\cdots q+\cdots +,
\]
and furthermore that no $-$ signs occur in $\gamma_w$.
This implies that the cover relations of types IA1, IA2, and II do not occur in the restriction of the weak order on $\Clan_{p,q}$ to $[\gamma_e, \gamma_0]$.  Note also that no cover relation of type IB can occur among the clans in $[\gamma_e, \gamma_0]$ since all arcs are either nested or crossing as there is an arc connecting $i<j$ if and only if $j=w^{-1}(i)+p$. This finishes the proof of our assertion. 
\end{proof}

By the lemma, to analyze the weak order on $[\gamma_e, \gamma_0]$ it is enough to consider cover relations of type IC1 and IC2.  The following example illustrates a cover relation of each type.

\begin{example}
Let $p=6$ and $q=5$. 
Let $w=51324\in \S_5$. The following depicts the cover relation of type IC2 obtained by uncrossing the (dashed) arcs in the charged matching for $\gamma_w$ with right endpoints $8$ and $9$, creating a nested pair.  Note that the resulting matching corresponds to the clan $\gamma_{w s_2}$, and we have $w = 51324 <_R ws_2=53124$.

\begin{center}
\scalebox{.8}{
\begin{tikzpicture}[scale=.8]

\begin{scope}[xshift=7cm]
 \node (x1) at (-5,0) {$\bullet$}; 
 \node (x2) at (-4,0) {$\bullet$};
 \node (x3) at (-3,0) {$\bullet$};
 \node (x4) at (-2,0) {$\bullet$};
 \node (x5) at (-1,0) {$\bullet$};
 \node (x6) at (0,0) {$\bullet$};
 \node (x7) at (5,0) {$\bullet$};
 \node (x8) at (4,0) {$\bullet$};
 \node (x9) at (3,0) {$\bullet$};
 \node (x10) at (2,0) {$\bullet$};
 \node (x11) at (1,0) {$\bullet$};
  
 \node at (-5,-.5) {$1$}; 
 \node at (-4,-.5) {$2$};
 \node at (-3,-.5) {$3$};
 \node at (-2,-.5) {$4$};
  \node at (-1,-.5) {$5$};
\node at (0, -.5) {$6$};
 \node at (5,-.5) {$11$};
 \node at (4,-.5) {$10$};
 \node at (3,-.5) {$9$};
 \node at (2,-.5) {$8$};
  \node at (1,-.5) {$7$};

  \node at (0, .5) {$+$};
 
\draw [ultra thick] (x4) to[out=60 ,in=120] (x7) ;
\draw [ultra thick,dashed] (x1) to[out=60 ,in=120] (x9) ;
\draw [ultra thick,-] (x2) to[out=60 ,in=120] (x8) ;
\draw [ultra thick,dashed] (x3) to[out=60 ,in=120] (x10) ;
\draw [ultra thick] (x5) to[out=90 ,in=90] (x11) ;

\node at (0,-2) {$\gamma_{ws_2}:=12345+53124$};
\end{scope}

\begin{scope}[xshift=0]
\node at (0,0) {$\xrightarrow{\;\;s_8\;\;}$};
\end{scope}

\begin{scope}[xshift=-7cm]
 \node (x1) at (-5,0) {$\bullet$}; 
 \node (x2) at (-4,0) {$\bullet$};
 \node (x3) at (-3,0) {$\bullet$};
 \node (x4) at (-2,0) {$\bullet$};
 \node (x5) at (-1,0) {$\bullet$};
 \node (x6) at (0,0) {$\bullet$};
 \node (x7) at (5,0) {$\bullet$};
 \node (x8) at (4,0) {$\bullet$};
 \node (x9) at (3,0) {$\bullet$};
 \node (x10) at (2,0) {$\bullet$};
 \node (x11) at (1,0) {$\bullet$};

 \node at (-5,-.5) {$1$}; 
 \node at (-4,-.5) {$2$};
 \node at (-3,-.5) {$3$};
 \node at (-2,-.5) {$4$};
 \node at (-1,-.5) {$5$};
 \node at (0, -.5) {$6$};
 \node at (5,-.5) {$11$};
 \node at (4,-.5) {$10$};
 \node at (3,-.5) {$9$};
 \node at (2,-.5) {$8$};
 \node at (1,-.5) {$7$}; 

  \node at (0, .5) {$+$};
 
\draw [ultra thick] (x4) to[out=60 ,in=120] (x7) ;
\draw [ultra thick,dashed] (x1) to[out=60 ,in=120] (x10) ;
\draw [ultra thick] (x2) to[out=60 ,in=120] (x8) ;
\draw [ultra thick,dashed] (x3) to[out=60 ,in=120] (x9) ;
\draw [ultra thick] (x5) to[out=90 ,in=90] (x11) ;

\node at (0,-2) {$\gamma_w=12345+51324$};
\end{scope} 
 \end{tikzpicture}
}
\end{center}
Similarly, we may apply a cover relation of type IC1 to $\gamma_w$ by swapping the (dashed) arcs with left endpoints $1$ and $2$, creating a nested pair.  The resulting matching corresponds to clan $\gamma_{s_1w}$ and we have $w=51324<_L s_1w = 52314$. 
\begin{center}
\scalebox{.8}{
\begin{tikzpicture}[scale=.8]

\begin{scope}[xshift=7cm]
 \node (x1) at (-5,0) {$\bullet$}; 
 \node (x2) at (-4,0) {$\bullet$};
 \node (x3) at (-3,0) {$\bullet$};
 \node (x4) at (-2,0) {$\bullet$};
 \node (x5) at (-1,0) {$\bullet$};
 \node (x6) at (0,0) {$\bullet$};
 \node (x7) at (5,0) {$\bullet$};
 \node (x8) at (4,0) {$\bullet$};
 \node (x9) at (3,0) {$\bullet$};
 \node (x10) at (2,0) {$\bullet$};
 \node (x11) at (1,0) {$\bullet$};
  
 \node at (-5,-.5) {$1$}; 
 \node at (-4,-.5) {$2$};
 \node at (-3,-.5) {$3$};
 \node at (-2,-.5) {$4$};
  \node at (-1,-.5) {$5$};
\node at (0, -.5) {$6$};
 \node at (5,-.5) {$11$};
 \node at (4,-.5) {$10$};
 \node at (3,-.5) {$9$};
 \node at (2,-.5) {$8$};
  \node at (1,-.5) {$7$};

  \node at (0, .5) {$+$};
 
\draw [ultra thick] (x4) to[out=60 ,in=120] (x7) ;
\draw [ultra thick,dashed] (x1) to[out=60 ,in=120] (x8) ;
\draw [ultra thick,dashed] (x2) to[out=60 ,in=120] (x10) ;
\draw [ultra thick] (x3) to[out=60 ,in=120] (x9) ;
\draw [ultra thick] (x5) to[out=90 ,in=90] (x11) ;

\node at (0,-2) {$\gamma_{s_1w}:=12345+52314$};
\end{scope}

\begin{scope}[xshift=0]
\node at (0,0) {$\xrightarrow{\;\;s_1\;\;}$};
\end{scope}

\begin{scope}[xshift=-7cm]
 \node (x1) at (-5,0) {$\bullet$}; 
 \node (x2) at (-4,0) {$\bullet$};
 \node (x3) at (-3,0) {$\bullet$};
 \node (x4) at (-2,0) {$\bullet$};
 \node (x5) at (-1,0) {$\bullet$};
 \node (x6) at (0,0) {$\bullet$};
 \node (x7) at (5,0) {$\bullet$};
 \node (x8) at (4,0) {$\bullet$};
 \node (x9) at (3,0) {$\bullet$};
 \node (x10) at (2,0) {$\bullet$};
 \node (x11) at (1,0) {$\bullet$};

 \node at (-5,-.5) {$1$}; 
 \node at (-4,-.5) {$2$};
 \node at (-3,-.5) {$3$};
 \node at (-2,-.5) {$4$};
 \node at (-1,-.5) {$5$};
 \node at (0, -.5) {$6$};
 \node at (5,-.5) {$11$};
 \node at (4,-.5) {$10$};
 \node at (3,-.5) {$9$};
 \node at (2,-.5) {$8$};
 \node at (1,-.5) {$7$}; 

  \node at (0, .5) {$+$};
 
\draw [ultra thick] (x4) to[out=60 ,in=120] (x7) ;
\draw [ultra thick,dashed] (x1) to[out=60 ,in=120] (x10) ;
\draw [ultra thick,dashed] (x2) to[out=60 ,in=120] (x8) ;
\draw [ultra thick] (x3) to[out=60 ,in=120] (x9) ;
\draw [ultra thick] (x5) to[out=90 ,in=90] (x11) ;

\node at (0,-2) {$\gamma_w=12345+51324$};
\end{scope} 
 \end{tikzpicture}
}
\end{center}
\end{example}

In the example above, we saw that each covering relation was of the form $\gamma_w \prec \gamma_{w'}$ for $w,w'\in \S_5$ such that $w \prec w'$ in the two-sided weak order on $\S_5$.  This holds in greater generality and brings us to the proof of Theorem~\ref{thm.weak-order-Sq}.

\begin{proof}[Proof of Theorem~\ref{thm.weak-order-Sq}]

By Lemma~\ref{lemma.covers}, the covering relations of the weak order on $[\gamma_e,\gamma_0]$ are given by either a Type IC1 covering relation or by a Type IC2 covering relation. 

Now, a covering relation of Type IC1 on clans in $[\gamma_e, \gamma_0]$ is of the form 
\begin{align}\label{A:IC1holds}
\gamma_w = c_1 \cdots c_{w^{-1}(i)+p} \cdots c_{w^{-1}(i+1)+p} \cdots c_n \xrightarrow{\; s_i \;} \gamma' = c_1 \cdots c_{w^{-1}(i+1)+p} \cdots c_{w^{-1}(i)+p} \cdots c_n,
\end{align}
for some $i\in [q-1]$ such that $w^{-1}(i) < w^{-1}(i+1)$. Since the resulting clan is obtained from $\gamma_w$ by interchanging $i$ and $i+1$ in the one-line notation for $w$, we see that $\gamma' = \gamma_{s_i w}$.
Similarly, a covering relation of type IC2 on clans in $[\gamma_e, \gamma_0]$ is of the form
\begin{align}\label{A:IC2holds}
\gamma_w = c_1 \cdots c_ic_{i+1} \cdots c_n  \xrightarrow{\;s_i\;} \gamma'=c_1 \cdots  c_{i+1} c_i \cdots c_n,
\end{align}
for some $i\in \{p+1,\dots, n-1\}$ such that $w(i-p)<w(i-p+1)$. In this case, we have $\gamma' = \gamma_{ ws_{i-p}}$ since the resulting clan is obtained by interchanging the entries in the positions $i-p$ and $i-p+1$ in the one-line notation for $w$.
In conclusion, we see that, for $w,v\in \S_q$,
if the clan $\gamma_w$ is covered by the clan $\gamma_v$ in the weak order, then $w$ is covered by $v$ in either the right weak order or the left weak order on $\S_q$.

We proceed to prove the converse statement. 
Let $w,v\in \S_q$ be two permutations.
Let $\gamma_w=c_1^wc_2^w \cdots c_n^w$ denote the (unique) clan corresponding to $w$, which is defined by
\begin{itemize}
\item $c_i^w= +$ for all $q<i \leq p$, and
\item $c_i^w=c_{p+w^{-1}(i)}^w=i$ for all $i\in [q]$.
\end{itemize}
Let $\gamma_v$ denote the unique clan corresponding to $v$, defined in a similar manner.
Now, we assume that $w$ is covered by $v$ in the left weak order on $\S_q$. Hence, $s_iw= v$ holds for some $i\in [q-1]$.
After writing $w$ and $s_iw$ in their one-line notations, we see that the covering relation $w\leq_L v$ corresponds to the covering relation in (\ref{A:IC1holds}).
Likewise, 
if $w$ is covered by $v$ in the right weak order in such a way that $ws_i= v$ for some $i\in [q-1]$, then the covering relation in (\ref{A:IC2holds}) holds.
Hence, we proved that $\gamma_w \preceq \gamma_v$ if and only if $w\leq_L v$ or $w\leq_R v$, as desired.
\end{proof}

\begin{corollary}\label{cor.231-avoid} The restriction of the weak order to $\Clan_{p,q}^{231}$ is isomorphic to the restriction
of the two-sided weak order on $\S_q$ to all $231$-free permutations.  
\end{corollary}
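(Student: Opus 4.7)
The proof plan here is essentially immediate, as the corollary is a formal consequence of Theorem~\ref{thm.weak-order-Sq} together with the definition of $\Clan_{p,q}^{231}$. The main work was already carried out in the proof of the preceding theorem.

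First, I would recall that Theorem~\ref{thm.weak-order-Sq} establishes that the map $\Phi \colon \S_q \to [\gamma_e, \gamma_0]$ given by $w \mapsto \gamma_w$ is an order isomorphism, where $\S_q$ is endowed with the two-sided weak order and $[\gamma_e, \gamma_0]$ carries the restriction of the weak order on $\Clan_{p,q}$. In particular, $\Phi$ is a bijection such that $w \preceq v$ in $\S_q$ if and only if $\gamma_w \preceq \gamma_v$ in $\Clan_{p,q}$.

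Next, I would observe directly from the definition of $\Clan_{p,q}^{231}$ given at the end of Section~\ref{S:Irreducible} that the image under $\Phi$ of the subset of $231$-free permutations of $\S_q$ is precisely $\Clan_{p,q}^{231}$. Therefore, restricting $\Phi$ to $231$-free permutations yields a bijection onto $\Clan_{p,q}^{231}$.

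Finally, I would invoke the general (and completely formal) fact that the restriction of an order isomorphism to any subset of its domain is an order isomorphism onto its image when both sides are endowed with the induced subposet structure. Applying this to the restriction of $\Phi$ gives the desired isomorphism, completing the proof. There is no real obstacle here; the substantive work lies entirely in Theorem~\ref{thm.weak-order-Sq}, and the corollary's role is mainly to record the consequence for the $231$-avoiding clans indexing the irreducible Hessenberg varieties of Theorem~\ref{thm.irreducible}.
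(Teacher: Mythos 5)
Your proof is correct and matches the paper's intent: the paper gives no explicit argument for this corollary, treating it as an immediate consequence of Theorem~\ref{thm.weak-order-Sq} together with the definition of $\Clan_{p,q}^{231}$, exactly as you spell out. The formal step you invoke — that restricting an order isomorphism to a subset gives an order isomorphism onto the induced subposet of the image — is precisely what is needed and requires no further elaboration.
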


\begin{remark}
It is well known that the right weak order on the set of $231$-free permutations 
is isomorphic to the Tamari lattice~\cite[Theorem 1.2]{Drake}.
It is also well-known that the Bruhat (i.e., inclusion) order on the set of $231$-free permutations is isomorphic to the opposite of the Dyck path lattice~\cite{BBFP}. 
\end{remark}

\begin{example} Let $p=q=3$. Figure~\ref{F:33} shows the weak order on $[\gamma_{123}, \gamma_{321}] \subset \Clan_{3,3}$ with all covering relations and corresponding clan written underneath each charged matching. The circled matching corresponds to the clan $\gamma_{231}$. By removing this matching, we obtain the Hasse diagram of the two-sided weak order on $\mathbf{Clan}^{231}_{3,3}$.

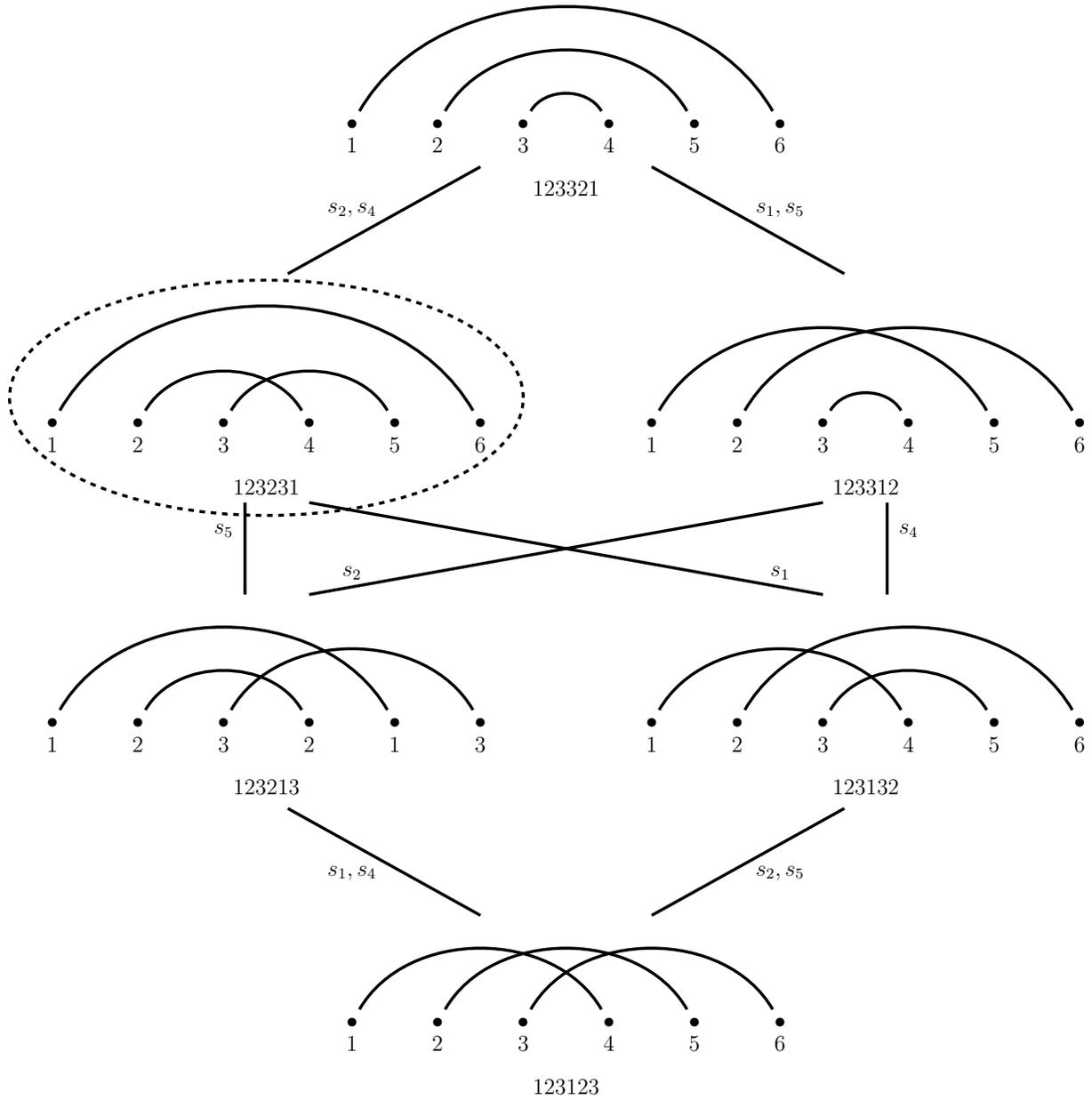
\begin{figure}[htp]
\begin{center}

\scalebox{.8}{
\begin{tikzpicture}[scale=.8]

\begin{scope} [xshift=0cm, yshift=-17cm]
 \node (a1) at (-5,0) {$\bullet$}; 
 \node (a2) at (-3,0) {$\bullet$};
 \node (a3) at (-1,0) {$\bullet$};
 \node (a4) at (1,0) {$\bullet$};
 \node (a5) at (3,0) {$\bullet$};
 \node (a6) at (5,0) {$\bullet$};
 \node at (-5,-.5) {$1$}; 
 \node at (-3,-.5) {$2$};
 \node at (-1,-.5) {$3$};
 \node at (1,-.5) {$4$};
 \node at (3,-.5) {$5$};
 \node at (5,-.5) {$6$};
\draw [ultra thick] (a1) to[out=60 ,in=120] (a4) ;
\draw [ultra thick] (a2) to[out=60 ,in=120] (a5) ;
\draw [ultra thick] (a3) to[out=60 ,in=120] (a6) ;
\node at (0,-1.5) {$123123$};
\end{scope} 

\begin{scope} [xshift=-7cm, yshift=-10cm]
 \node (a1) at (-5,0) {$\bullet$}; 
 \node (a2) at (-3,0) {$\bullet$};
 \node (a3) at (-1,0) {$\bullet$};
 \node (a4) at (1,0) {$\bullet$};
 \node (a5) at (3,0) {$\bullet$};
 \node (a6) at (5,0) {$\bullet$};
 \node at (-5,-.5) {$1$}; 
 \node at (-3,-.5) {$2$};
 \node at (-1,-.5) {$3$};
 \node at (1,-.5) {$2$};
 \node at (3,-.5) {$1$};
 \node at (5,-.5) {$3$};
\draw [ultra thick] (a1) to[out=60 ,in=120] (a5) ;
\draw [ultra thick] (a2) to[out=60 ,in=120] (a4) ;
\draw [ultra thick] (a3) to[out=60 ,in=120] (a6) ;
\node at (0,-1.5) {$123213$};
\end{scope}

\begin{scope} [xshift=7cm, yshift=-10cm]
 \node (a1) at (-5,0) {$\bullet$}; 
 \node (a2) at (-3,0) {$\bullet$};
 \node (a3) at (-1,0) {$\bullet$};
 \node (a4) at (1,0) {$\bullet$};
 \node (a5) at (3,0) {$\bullet$};
 \node (a6) at (5,0) {$\bullet$};
 \node at (-5,-.5) {$1$}; 
 \node at (-3,-.5) {$2$};
 \node at (-1,-.5) {$3$};
 \node at (1,-.5) {$4$};
 \node at (3,-.5) {$5$};
 \node at (5,-.5) {$6$};
\draw [ultra thick] (a1) to[out=60 ,in=120] (a4) ;
\draw [ultra thick] (a2) to[out=60 ,in=120] (a6) ;
\draw [ultra thick] (a3) to[out=60 ,in=120] (a5) ;
\node at (0,-1.5) {$123132$};
\end{scope} 

\begin{scope} [xshift=-7cm, yshift=-3cm]
 \node (a1) at (-5,0) {$\bullet$}; 
 \node (a2) at (-3,0) {$\bullet$};
 \node (a3) at (-1,0) {$\bullet$};
 \node (a4) at (1,0) {$\bullet$};
 \node (a5) at (3,0) {$\bullet$};
 \node (a6) at (5,0) {$\bullet$};
 \node at (-5,-.5) {$1$}; 
 \node at (-3,-.5) {$2$};
 \node at (-1,-.5) {$3$};
 \node at (1,-.5) {$4$};
 \node at (3,-.5) {$5$};
 \node at (5,-.5) {$6$};
\draw [ultra thick] (a1) to[out=60 ,in=120] (a6) ;
\draw [ultra thick] (a2) to[out=60 ,in=120] (a4) ;
\draw [ultra thick] (a3) to[out=60 ,in=120] (a5) ;
\draw[ultra thick, dashed] (0,.6) ellipse (6cm and 2.75cm);
\node at (0,-1.5) {$123231$};
\end{scope} 

\begin{scope} [xshift=7cm, yshift=-3cm]
 \node (a1) at (-5,0) {$\bullet$}; 
 \node (a2) at (-3,0) {$\bullet$};
 \node (a3) at (-1,0) {$\bullet$};
 \node (a4) at (1,0) {$\bullet$};
 \node (a5) at (3,0) {$\bullet$};
 \node (a6) at (5,0) {$\bullet$};
 \node at (-5,-.5) {$1$}; 
 \node at (-3,-.5) {$2$};
 \node at (-1,-.5) {$3$};
 \node at (1,-.5) {$4$};
 \node at (3,-.5) {$5$};
 \node at (5,-.5) {$6$};
\draw [ultra thick] (a1) to[out=60 ,in=120] (a5) ;
\draw [ultra thick] (a2) to[out=60 ,in=120] (a6) ;
\draw [ultra thick] (a3) to[out=60 ,in=120] (a4) ;
\node at (0,-1.5) {$123312$};
\end{scope} 

\begin{scope} [xshift=0cm, yshift=4cm]
 \node (a1) at (-5,0) {$\bullet$}; 
 \node (a2) at (-3,0) {$\bullet$};
 \node (a3) at (-1,0) {$\bullet$};
 \node (a4) at (1,0) {$\bullet$};
 \node (a5) at (3,0) {$\bullet$};
 \node (a6) at (5,0) {$\bullet$};
 \node at (-5,-.5) {$1$}; 
 \node at (-3,-.5) {$2$};
 \node at (-1,-.5) {$3$};
 \node at (1,-.5) {$4$};
 \node at (3,-.5) {$5$};
 \node at (5,-.5) {$6$};
\draw [ultra thick] (a1) to[out=60 ,in=120] (a6) ;
\draw [ultra thick] (a2) to[out=60 ,in=120] (a5) ;
\draw [ultra thick] (a3) to[out=60 ,in=120] (a4) ;
\node at (0,-1.5) {$123321$};
\end{scope}

\begin{scope}
\draw[ultra thick] (-2,-14.5) -- (-6.5,-12);
\node at (-5,-13.5) {$s_1,s_4$};
\end{scope}

\begin{scope}
\draw[ultra thick] (2,-14.5) -- (6.5,-12);
\node at (5,-13.5) {$s_2,s_5$};
\end{scope}

\begin{scope}
\draw[ultra thick] (-7.5,-7) -- (-7.5,-4.85);
\node at (-8,-5.5) {$s_5$};
\end{scope}

\begin{scope}
\draw[ultra thick] (7.5,-7) -- (7.5,-4.85);
\node at (8,-5.5) {$s_4$};
\end{scope}

\begin{scope}
\draw[ultra thick] (-6.5,.5) -- (-2,3);
\node at (-5,2) {$s_2, s_4$};
\end{scope}

\begin{scope}
\draw[ultra thick] (6.5,.5) -- (2,3);
\node at (5,2) {$s_1,s_5$};
\end{scope}

\begin{scope}
\draw[ultra thick] (-6,-7) -- (6,-4.85);
\draw[ultra thick] (6,-7) -- (-6,-4.85);
\node at (5,-6.5) {$s_1$};
\node at (-5,-6.5) {$s_2$};
\end{scope}

 \end{tikzpicture}
}
\end{center}
\caption{Weak order graph on $[\gamma_{123}, \gamma_{321}] \subset \Clan_{3,3}$.}
\label{F:33}
\end{figure}
\end{example}

With a precise description of the weak order on $[\gamma_e, \gamma_0] \subset \Clan_{p,q}$ in hand, we turn our attention to computing the $W$-sets $W(\gamma_w)$ for each $w\in \S_q$.  Using Brion's Theorem~\ref{thm.Brion}, we can use this set to obtain polynomial representatives of the cohomology classes $[\ov{\O_{\gamma_w}}]$ for each $w\in \S_q$. If $w$ avoids $231$ then we obtain, by our work in the previous section, a polynomial representative for the cohomology class of the semisimple Hessenberg variety $\Hess(\x_{p,q}, \m_w)$.

We begin with a lemma whose proof is evident. 

\begin{lemma}\label{L:Marthasmap}
The map $\varphi : \mathbf{S}_{n}\to \mathbf{S}_{n}$ defined by $\varphi(v) = w_0 v^{-1} w_0$ for all $v\in \mathbf{S}_{n}$ is an anti-involution.
In other words, we have $\varphi^2 = id$ and for all $v, w\in \mathbf{S}_{n}$ we have $\varphi(vw) = \varphi(w)\varphi(v)$. 
\end{lemma}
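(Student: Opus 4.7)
The proof is essentially a two-line computation using only two facts about the longest element: first, $w_0$ is an involution, i.e.\ $w_0^2 = e$ and $w_0 = w_0^{-1}$; second, inversion on $\mathbf{S}_n$ is itself an anti-involution, i.e.\ $(vw)^{-1} = w^{-1}v^{-1}$ and $(v^{-1})^{-1} = v$.

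The plan is simply to verify the two stated identities separately. For $\varphi^2 = \mathrm{id}$, I would apply $\varphi$ to $\varphi(v) = w_0 v^{-1} w_0$ and compute
\[
\varphi^2(v) \;=\; w_0\bigl(w_0 v^{-1} w_0\bigr)^{-1} w_0 \;=\; w_0 \bigl(w_0^{-1}\bigr) v \bigl(w_0^{-1}\bigr) w_0 \;=\; v,
\]
where the last equality uses $w_0 w_0^{-1} = e$ on both sides. For the anti-homomorphism property, I would insert an identity $w_0 w_0 = e$ between the factors $v^{-1}$ and the outer $w_0$ after using $(vw)^{-1} = w^{-1}v^{-1}$, obtaining
\[
\varphi(vw) \;=\; w_0 (vw)^{-1} w_0 \;=\; w_0 w^{-1} v^{-1} w_0 \;=\; \bigl(w_0 w^{-1} w_0\bigr)\bigl(w_0 v^{-1} w_0\bigr) \;=\; \varphi(w)\varphi(v).
\]

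There is no genuine obstacle here: the statement holds for conjugation by any involution in any group once combined with the anti-automorphism property of inversion, and does not use any special structure of $\mathbf{S}_n$ beyond $w_0 = w_0^{-1}$. The only thing to be careful about is the ordering of the factors when applying $(vw)^{-1} = w^{-1}v^{-1}$, which is exactly what produces the order reversal $\varphi(vw) = \varphi(w)\varphi(v)$ rather than $\varphi(v)\varphi(w)$.
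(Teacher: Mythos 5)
Your proof is correct and is exactly the straightforward verification the paper has in mind when it says the proof "is evident": both identities follow immediately from $w_0 = w_0^{-1}$ together with the fact that inversion is an anti-automorphism, plus inserting $w_0 w_0 = e$ at the right spot. Nothing more to add.
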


Since the map $\varphi$ defined in Lemma~\ref{L:Marthasmap} is an anti-involution, it is a bijection. 
We are interested in the restriction of $\varphi$ to the subgroup $\mathbf{S}_q:= \langle s_1,\dots, s_{q-1}\rangle \hookrightarrow \mathbf{S}_{n}$. 
Recall that the support $\Supp(w)$ of $w$ is the set of all simple reflections that arise in any reduced word for~$w$.  

\begin{lemma}\label{lem.phi.Sq} The restriction of $\varphi$ to $\S_q$ induces a bijection $\varphi: \S_q \to \left< s_{p+1}, \ldots, s_{n-1} \right>$. Furthermore, $\ell(v) = \ell(\varphi(v))$ for all $v\in S_q$ and $\Supp(u)\cap \Supp(\varphi(v)) = \emptyset$ for all $u,v\in \S_q$.
\end{lemma}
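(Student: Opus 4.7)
The proof will rest on the well-known identity $w_0 s_i w_0 = s_{n-i}$ for every $i \in [n-1]$, together with the fact that inversion preserves length in $\S_n$. First, I would fix $v \in \S_q$ with a reduced word $v = s_{i_1} s_{i_2} \cdots s_{i_\ell}$, where each $i_j \in [q-1]$. Then $v^{-1} = s_{i_\ell} \cdots s_{i_1}$, and inserting factors of $w_0^2 = e$ yields
\[
\varphi(v) \;=\; w_0 v^{-1} w_0 \;=\; (w_0 s_{i_\ell} w_0)(w_0 s_{i_{\ell-1}} w_0) \cdots (w_0 s_{i_1} w_0) \;=\; s_{n-i_\ell} s_{n-i_{\ell-1}} \cdots s_{n-i_1}.
\]
Since each $i_j \in [q-1]$ and $n = p+q$, each index $n - i_j$ lies in $\{p+1, p+2, \ldots, n-1\}$. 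Hence $\varphi(v) \in \langle s_{p+1}, \ldots, s_{n-1}\rangle$, proving that $\varphi$ restricts to a map $\S_q \to \langle s_{p+1}, \ldots, s_{n-1}\rangle$.

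Next, to establish the length equality, I would observe that the displayed expression $s_{n-i_\ell} \cdots s_{n-i_1}$ for $\varphi(v)$ has the same number of simple reflections as the reduced word for $v$, so $\ell(\varphi(v)) \leq \ell(v)$. The reverse inequality is obtained by applying the same argument to $\varphi(v)$ and using $\varphi^2 = \mathrm{id}$ from Lemma~\ref{L:Marthasmap}. Therefore $\ell(\varphi(v)) = \ell(v)$, and as a consequence the displayed word is in fact reduced.

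For the bijection claim, injectivity of the restriction follows immediately from the fact that $\varphi$ is a bijection on $\S_n$ (Lemma~\ref{L:Marthasmap}). The target subgroup $\langle s_{p+1}, \ldots, s_{n-1}\rangle$ is isomorphic to $\S_q$ (it is the Young subgroup permuting $\{p+1, \ldots, n\}$ pointwise-fixing $[p]$), so both groups have order $q!$; hence the injection is a bijection.

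Finally, for the support statement, the analysis in the first paragraph shows that for any $v \in \S_q$, $\Supp(\varphi(v)) \subseteq \{s_{p+1}, \ldots, s_{n-1}\}$, while for any $u \in \S_q$ we have $\Supp(u) \subseteq \{s_1, \ldots, s_{q-1}\}$. Using the standing assumption $p \geq q$ (so in particular $q-1 < p+1$), these two sets of simple reflections are disjoint, giving $\Supp(u) \cap \Supp(\varphi(v)) = \emptyset$. There is no serious obstacle here; the entire argument is a bookkeeping exercise built on the diagram-automorphism identity $w_0 s_i w_0 = s_{n-i}$, and the only point requiring a moment's care is ensuring that the word produced for $\varphi(v)$ is genuinely reduced, which is why I would handle length preservation via the involution $\varphi^2 = \mathrm{id}$ rather than invoking it as automatic.
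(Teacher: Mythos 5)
Your proof is correct and takes essentially the same route as the paper's, namely the identity $w_0 s_i w_0 = s_{n-i}$ applied term-by-term to a reduced word. The only differences are that you spell out a couple of steps the paper treats as immediate: you verify that the conjugated word is genuinely reduced via the $\varphi^2 = \mathrm{id}$ argument (the paper simply asserts that conjugation by $w_0$ carries reduced words to reduced words), and you give an explicit counting argument for surjectivity (the paper deems the bijection obvious).
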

\begin{proof} Since $\varphi(s_i) = w_0 s_i w_0 = s_{n-i}$ for all $i\in [n-1]$, the first assertion of the lemma is obvious.  Next, if $s_{i_1}\cdots s_{i_r}$ is a reduced expression for $v\in \mathbf{S}_{q}$, then $s_{n-i_1}\cdots s_{n-i_r}$ is a reduced expression of $w_0 v w_0$. 
In particular, 
\[
v = s_{i_1}\cdots s_{i_r} \Rightarrow \varphi(v) = s_{n-i_{r}}s_{n-i_{r-1}}\cdots s_{n-i_1}
\]
so $\ell(\varphi(v))=\ell(v)$. Finally, $\Supp(u)\subseteq \{s_1, \ldots, s_{{q-1}}\}$ and $\Supp(\varphi(v)) \subseteq \{s_{p+1}, \ldots, s_{n-1}\}$.  Since $q\leq p$, we obtain the final assertion.
\end{proof}

We these observations in place, we define a map that will allow us to compute $W(\gamma_w)$ in Theorem~\ref{thm.W-set} below.

\begin{lemma}\label{lemma.key-map} The map
\begin{eqnarray}\label{eqn.key-map}
\S_q \times \S_q \to \S_n ,\; (u,v) \mapsto u\varphi(v)
\end{eqnarray}
is injective.  Furthermore, $\ell(u\varphi(v)) = \ell(u)+ \ell(v)$ for all $u,v\in \S_q$.
\end{lemma}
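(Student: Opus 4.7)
The plan is to exploit the fact that $u \in \S_q = \langle s_1, \ldots, s_{q-1}\rangle$ and $\varphi(v) \in \langle s_{p+1}, \ldots, s_{n-1}\rangle$ act on disjoint blocks of $[n]$. Specifically, $u$ permutes $\{1, \ldots, q\}$ and fixes $\{q+1, \ldots, n\}$ pointwise, while $\varphi(v)$ permutes $\{p+1, \ldots, n\}$ and fixes $\{1, \ldots, p\}$ pointwise. Since $q \leq p$ by hypothesis, the intervals $\{1, \ldots, q\}$ and $\{p+1, \ldots, n\}$ are disjoint, and in particular $u$ and $\varphi(v)$ commute as elements of $\S_n$.

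For injectivity, suppose $u \varphi(v) = u' \varphi(v')$ for $u, u', v, v' \in \S_q$. The permutation $u \varphi(v)$ restricted to the block $\{1, \ldots, q\}$ recovers $u$ (since $\varphi(v)$ fixes this block pointwise), and restricted to $\{p+1, \ldots, n\}$ recovers $\varphi(v)$. Hence $u = u'$ and $\varphi(v) = \varphi(v')$, and the latter gives $v = v'$ by Lemma~\ref{lem.phi.Sq}.

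For the length identity, I would count inversions directly. Write $\sigma = u\varphi(v)$. Any inversion pair $(i,j)$ of $\sigma$ with $1 \leq i < j \leq n$ and $\sigma(i) > \sigma(j)$ must fall into one of three cases: both $i,j \in \{1, \ldots, q\}$, both $i,j \in \{p+1, \ldots, n\}$, or one lies in each of $\{1, \ldots, q\}$, $\{p+1, \ldots, n\}$ (possibly with indices in the middle block $\{q+1, \ldots, p\}$ where $\sigma$ acts as the identity). In the first case the inversions are exactly those of $u$; in the second they are exactly those of $\varphi(v)$; and the mixed case contributes nothing because for $i \leq q$ and $j \geq p+1$ we have $\sigma(i) = u(i) \leq q < p+1 \leq \varphi(v)(j) = \sigma(j)$. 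Similarly, inversions involving an index in $\{q+1, \ldots, p\}$ contribute nothing since $\sigma$ fixes such indices and $u(i) \leq q < k$ and $\varphi(v)(j) \geq p+1 > k$ for $k$ in the middle block. Therefore $\ell(\sigma) = \ell(u) + \ell(\varphi(v)) = \ell(u) + \ell(v)$ by Lemma~\ref{lem.phi.Sq}.

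No step here looks like a real obstacle; the entire argument rests on the elementary disjoint-support observation, which is a direct consequence of the assumption $q \leq p$ together with the identification of $\varphi(\S_q)$ made in Lemma~\ref{lem.phi.Sq}. The only thing to take a little care with is ensuring that indices in the middle block $\{q+1, \ldots, p\}$ (which may be empty if $p = q$) do not produce hidden inversions, but since $\sigma$ is the identity there and the values $u(i)$, $\varphi(v)(j)$ stay in the correct half-intervals, this case is automatic.
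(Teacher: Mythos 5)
Your proof is correct and rests on the same core observation as the paper's, namely that the hypothesis $q \leq p$ forces $u \in \langle s_1,\ldots,s_{q-1}\rangle$ and $\varphi(v) \in \langle s_{p+1},\ldots,s_{n-1}\rangle$ to have disjoint supports. The implementation, however, differs in both halves. For injectivity, the paper rearranges $u_1\varphi(v_1) = u_2\varphi(v_2)$ into $u_2^{-1}u_1 = \varphi(v_2)\varphi(v_1)^{-1}$, notes that this element lies in $\S_q \cap \langle s_{p+1},\ldots,s_{n-1}\rangle$, which is trivial, and concludes; you instead recover $u$ and $\varphi(v)$ directly by restricting the product permutation to the two invariant blocks $\{1,\ldots,q\}$ and $\{p+1,\ldots,n\}$. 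For the length identity, the paper invokes the standard Coxeter fact that a product of two elements with disjoint (and pairwise-commuting) supports has a reduced word obtained by concatenating reduced words of the factors, whereas you count inversions explicitly, checking that all inversions are internal to one of the two blocks and that the middle block $\{q+1,\ldots,p\}$ contributes nothing. Both routes are sound; yours is more elementary and self-contained (no appeal to reduced-word concatenation), while the paper's is slightly more compact by leaning on standard parabolic-subgroup facts. One thing you handled carefully and correctly is the middle block, which is where the argument would break down if $q > p$; you correctly note this block is possibly empty and that $\sigma$ acts as the identity there with values landing in the right half-intervals on either side.
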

\begin{proof} Recall from Lemma~\ref{lem.phi.Sq} that $\varphi$ maps $\S_q = \left< s_1, \ldots, s_{q-1} \right>$ to  $\left< s_{p+1}, \ldots, s_{n-1} \right>$ and note that the intersection of these subgroups is the trivial group since $q\leq p$.  Thus, if $u_1,u_2,v_1,v_2\in \S_q$ such that $u_1\varphi(v_1) = u_2\varphi(v_2)$ then
\begin{align*}
u_2^{-1}u_1 &= \varphi(v_2)\varphi(v_1)^{-1}     \in  \S_q \cap \left< s_{p+1}, \ldots, s_{n-1} \right> = \left<e\right> \\
& \quad\quad\Rightarrow u_1=u_2 \;\textup{ and }\; \varphi(v_1)=\varphi(v_2),
\end{align*}
and injectivity of the map follows. Since $\Supp(u)\cap \Supp(\varphi(v))=\emptyset$, any reduced word for $u\varphi(v)$ is the product of a reduced word of $u$ in $\S_q$ and a reduced word for $\varphi(v)$ in $\left< s_{p+1}, \ldots, s_{n-1} \right>$. Thus $\ell(u\varphi(v)) = \ell(u)+\ell(\varphi(v)) = \ell(u)+\ell(v)$ as desired.
\end{proof}

For each $w\in \S_q$ we define the set 
\[
\mathcal{S}(w):= \{ (u,v)\in \mathbf{S}_q \times \mathbf{S}_q \mid\ w=uv \text{ and } \ell(w) = \ell(u)+\ell(v) \}.
\]
Note that $\mathcal{S}(e)=\{(e,e)\}$ and $\mathcal{S}(s_i) = \{(e,s_i), (s_i, e)\}$.

\begin{example}\label{ex.Sw-set}
If $q=3$ and $w=312 = s_2s_1$ then $\mathcal{S}(w) = \{ (e, s_2s_1), (s_2, s_1), (s_2s_1, e)  \}$.
\end{example}

Recall that $y_0\in \S_q$ denotes the longest element. The second main theorem of this section describes the $W$-sets of clans $\gamma_w$ concretely using the set $\mathcal{S}(wy_0)$.

\begin{theorem}\label{thm.W-set}
For all $w\in\S_q$ the restriction of the map~\eqref{eqn.key-map} from Lemma~\ref{lemma.key-map} to $\mathcal{S}(wy_0)\subseteq \S_q\times \S_q$ induces a bijection
\[
\psi_w: \mathcal{S}(wy_0) \to W(\gamma_w) ,\; \psi_w(u,v) := u\varphi(v).
\]
In particular, the $W$-set of the clan $\gamma_w$ is $W(\gamma_w) = \{ u\varphi(v) \mid (u,v)\in \mathcal{S}(wy_0)\}$.
\end{theorem}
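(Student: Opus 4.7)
The plan is to leverage Theorem~\ref{thm.weak-order-Sq}, which identifies the weak order on $[\gamma_e, \gamma_0]$ with the two-sided weak order on $\S_q$. Under this identification, a labeled directed path $P$ from $\gamma_w$ to $\gamma_{y_0}$ is a saturated chain in the two-sided weak order in which each cover is realized either as a left multiplication in $\S_q$ by some $s_j$ (carrying label $s_j \in \S_n$) or as a right multiplication by some $s_j$ (carrying label $s_{j+p} \in \S_n$). Since $q \leq p$, any left label commutes with any right label in $\S_n$, and the right labels are precisely the image under $\varphi$ of the simple generators of $\S_q$.

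For the forward inclusion $W(\gamma_w) \subseteq \psi_w(\mathcal{S}(wy_0))$, I fix a path $P$ and let $s_{a_1}, \ldots, s_{a_m}$ and $s_{b_1+p}, \ldots, s_{b_k+p}$ denote its left and right labels, respectively, in order of appearance. Commutation of left and right labels in $\S_n$ allows us to write $w(P) = u \cdot (s_{b_1+p} \cdots s_{b_k+p})$, where $u := s_{a_1} \cdots s_{a_m} \in \S_q$. A direct computation using the anti-homomorphism property of $\varphi$ yields $s_{b_1+p} \cdots s_{b_k+p} = \varphi(v)$, where $v := y_0 v_R^{-1} y_0$ and $v_R := s_{b_1} \cdots s_{b_k} \in \S_q$. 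Since left and right multiplications on $\S_q$ commute as operations, the cumulative effect of the path is $u^{-1} w v_R = y_0$, so $wy_0 = u(y_0 v_R^{-1} y_0) = uv$. Moreover, because each cover in left or right weak order raises length by one, a standard triangle-inequality comparison shows $\ell(u) = m$ and $\ell(v) = \ell(v_R) = k$, with $m + k = \ell(y_0) - \ell(w) = \ell(wy_0)$. Thus $(u, v) \in \mathcal{S}(wy_0)$ and $w(P) = \psi_w(u, v)$.

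For the reverse inclusion, given $(u, v) \in \mathcal{S}(wy_0)$, I construct a path realizing $u \varphi(v)$. Set $v_R := y_0 v^{-1} y_0$ and pick reduced expressions $u = s_{a_1} \cdots s_{a_m}$ and $v_R = s_{b_1} \cdots s_{b_k}$ in $\S_q$. Perform all left multiplications $s_{a_1}, \ldots, s_{a_m}$ first, then all right multiplications $s_{b_1+p}, \ldots, s_{b_k+p}$. Each step is a valid cover provided $\ell(u^{-1} w) = \ell(u) + \ell(w)$ and $\ell(u^{-1} w v_R) = \ell(u^{-1} w) + \ell(v_R)$; both identities follow from $uv = wy_0$, the length condition $\ell(u) + \ell(v) = \ell(y_0) - \ell(w)$, and the longest-element identity $\ell(xy_0) = \ell(y_0) - \ell(x)$. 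The same computation as in the forward direction then gives $w(P) = u\varphi(v)$. Injectivity of $\psi_w$ is immediate from Lemma~\ref{lemma.key-map}. The main subtlety, where one must be careful, is keeping track of the $y_0$-conjugation that relates the $\S_q$-side right-multiplication product $v_R$ to its image $\varphi(v)$ in $\S_n$.
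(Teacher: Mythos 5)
Your proof is correct, and it differs from the paper's in a useful way on the surjectivity direction. The paper proves $W(\gamma_w) \subseteq \psi_w(\mathcal{S}(wy_0))$ by induction on $\ell(wy_0)$: it looks at the first cover $\gamma_w \xrightarrow{s_i} \gamma_{w'}$ along a path, applies the inductive hypothesis to $w'$, and then checks, in two cases (IC1 vs.\ IC2), that prepending $s_i$ moves a pair in $\mathcal{S}(w'y_0)$ to a pair in $\mathcal{S}(wy_0)$. You instead argue directly: you partition the labels of an arbitrary path $P$ into left labels $s_{a_j} \in \{s_1,\dots,s_{q-1}\}$ and right labels $s_{b_j+p} \in \{s_{p+1},\dots,s_{n-1}\}$, observe that these commute in $\S_n$ (using $q \le p$) so that $w(P) = u\cdot(s_{b_1+p}\cdots s_{b_k+p})$ with $u = s_{a_1}\cdots s_{a_m}$, and then unwind $s_{b_1+p}\cdots s_{b_k+p} = \varphi(v)$ with $v = y_0 v_R^{-1} y_0$ via the $y_0$-conjugation identity $y_0 s_i y_0 = s_{q-i}$ and the anti-homomorphism property. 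Your length bookkeeping is sound: the path has $m+k = \ell(y_0)-\ell(w)$ steps, and the chain $\ell(wy_0) = \ell(uv) \le \ell(u)+\ell(v) \le m+k$ forces equalities throughout, giving $(u,v) \in \mathcal{S}(wy_0)$; it would be worth spelling this out rather than waving at ``a standard triangle-inequality comparison.'' For the other inclusion both approaches construct essentially the same saturated chain; you parametrize the right-weak part by $v_R = y_0 v^{-1}y_0$ while the paper uses the reduced word of $v$ and performs right multiplications by $s_{q-b_t},\dots,s_{q-b_1}$, which amounts to the same thing after the change of variables. Your argument is arguably cleaner for surjectivity since it avoids induction and the two-case cover analysis, at the cost of some extra bookkeeping with $v_R$; the one place to be careful (which you correctly flag) is keeping track of the $y_0$-conjugation relating the $\S_q$-side product $v_R$ to $\varphi(v)$.
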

\begin{proof}
We argue first that $\psi_w(u,v) \in W(\gamma_w)$ for all $(u,v)\in \mathcal{S}(w y_0)$. Given $(u,v)\in \mathcal{S}(w y_0)$, let $u=s_{a_1}s_{a_2}\cdots s_{a_r}$ and $v=s_{b_1}s_{b_2}\cdots s_{b_t}$ be reduced words for $u$ and $v$, respectively. By assumption,
\[
wy_0 =s_{a_1}s_{a_2}\cdots s_{a_r} s_{b_1}s_{b_2}\cdots s_{b_t}
\]
is a reduced word for $wy_0$.  Manipulating this expression and using the fact that $y_0 s_i y_0 = s_{q-i}$ for all $i$ implies
\[
s_{a_r} \cdots s_{a_2}s_{a_1} w s_{q-b_t}\cdots s_{q-b_2}s_{q-b_1} = y_0
\]
with $\ell(y_0) = \ell(w) + r+t$.
In particular, this expression yields a chain of length $r+t$ in the two-sided weak order on~$\S_q$: 
\begin{align*}
w \xrightarrow{ s_{a_1} } s_{a_1} w &\xrightarrow{\; s_{a_2} \; } s_{a_2}s_{a_1} w \rightarrow \cdots \xrightarrow{\; s_{a_r}\;} s_{a_r} \cdots s_{a_2}s_{a_1} w = u^{-1}w \\ & \xrightarrow{\; s_{q-b_t} \;} u^{-1}w  s_{q-b_t} \rightarrow  \cdots \xrightarrow{\; s_{q-b_2} \;}    u^{-1}w s_{q-b_t}\cdots s_{q-b_2} \xrightarrow{\; s_{q-b_1} \;} y_0. 
\end{align*}
In this chain, left multiplication by $s_{a_k}$ is a cover in the left weak order on $\S_q$ and corresponds to a cover of type IC1 on clans. This cover of type IC1 on clans is labeled by the simple reflection $s_{a_k}\in \S_n$. Right multiplication by $s_{b_k}$ is a cover in the right weak order on $\S_q$ and corresponds to a cover of type IC2 on clans. This cover of type IC2 on clans is labeled by the simple reflection $s_{n-b_k}= \varphi(s_{b_k}) \in \S_n$. 
By Theorem~\ref{thm.weak-order-Sq} and definition of the $W$-set, it follows that 
\[
u\varphi(v) = s_{a_1} s_{a_2 } \cdots s_{a_r} s_{n-b_t} \cdots s_{n-b_2}s_{n-b_1} \in W(\gamma_w) 
\]
as desired.

To complete the proof, it suffices by Lemma~\ref{lemma.key-map} to show that $\psi_w$ is surjective. We proceed by induction on the nonnegative integer $\ell(wy_0)=\ell(y_0)- \ell(w)$. If $\ell(wy_0)=0$ then $w=y_0$, $W(\gamma_w) = W(\gamma_0) = \{e\}$, and $\mathcal{S}(wy_0) = \mathcal{S}(e) = \{(e,e)\}$. Thus our claim holds trivially in this case. 

Suppose now that $w\in \S_q$ such that $\ell = \ell(wy_0)>0$ and $\psi_{w'}$ is surjective for all $w'\in \S_q$ such that $\ell(w'y_0) = \ell-1$. Since the $W$-set of $\gamma_w$ is obtained by multiplying the labels of the weak order cover relations along a saturated path from $\gamma_w$ to $\gamma_0$, if $x\in W(\gamma_w)$ there exists $w' \in \S_q$ and $x'\in W(\gamma_{w'})$ such that  $\gamma_w \xrightarrow{\;s_i\;} \gamma_{w'}$ and $x=s_i x'$. By Theorem~\ref{thm.weak-order-Sq}, $w'$ is a cover of $w$ in the two-sided weak order on $\S_q$ so $\ell(w') = \ell(w)+1$.  This in turn implies $\ell(w'y_0) = \ell(wy_0)-1 = \ell-1$ and the induction hypothesis implies that there exists $(u,v)\in \mathcal{S}(w'y_0)$ such that $x'=u\varphi(v)$.

There are two possible cases to consider: the cover $\gamma_w \prec \gamma_{w'}$ is either of type IC1 or IC2. If $\gamma_w \xrightarrow{\;s_i \;} \gamma_{w'}$ is a cover in the weak order on clans of type IC1, then the proof of Theorem~\ref{thm.weak-order-Sq} implies $i\in [q-1]$ and $w'=s_iw$. Our assumptions also yield
\begin{align}
\nonumber\ell(x) = \ell(x')+1 &\Rightarrow \ell(s_iu\varphi(v) ) = \ell(u\varphi(v))+1 \\
\nonumber&\Rightarrow \ell(s_i u) + \ell(v) = \ell(u) + \ell(v)+1 \\
\label{eqn.u.length}&\Rightarrow \ell(s_i u) = \ell(u)+1,
\end{align}
where the second implication follows from Lemma~\ref{lemma.key-map}.
This shows $(s_iu,v)\in \mathcal{S}(wy_0)$ since $s_i u v= s_i w'y_0 = wy_0$ and 
\[
\ell(w'y_0) = \ell(u)+\ell(v) \Rightarrow \ell (y_0) - \ell(w) -1  =  \ell(u)+\ell(v) \Rightarrow \ell(wy_0) = \ell(s_iu) + \ell(v)
\]
by~\eqref{eqn.u.length} above. Now $x= s_i u \varphi(v) = \psi_w (s_i u,v)$, so $\psi_w$ is surjective in this case.

If $\gamma_w \xrightarrow{\;s_i \;} \gamma_{w'}$ is a cover in the weak order on clans of type IC2, then the proof of Theorem~\ref{thm.weak-order-Sq} implies $i\in \{p+1, \ldots, n-1\}$ and $w' = w s_{i-p}$ with $\ell(w')$.  Note that $s_i$ commutes with $u\in \S_q$ and recall that $s_{i} = \varphi (s_{n-i})$. Thus 
\begin{eqnarray}\label{eqn.x}
x = s_i x'=s_i u \varphi(v) = u s_i \varphi(v) = u \varphi(s_{n-i}) \varphi(v) = u \varphi(vs_{n-i})
\end{eqnarray}
by Lemma~\ref{L:Marthasmap}. Our assumptions also imply 
\begin{align}
\nonumber\ell(x) = \ell(x')+1 &\Rightarrow \ell(u \varphi(vs_{n-i})) = \ell(u\varphi(v))+1 \\
\nonumber&\Rightarrow \ell(u) + \ell(vs_{n-i}) = \ell(u) + \ell(v)+1 \\
\label{eqn.v.length}&\Rightarrow \ell(vs_{n-i}) = \ell(v)+1,
\end{align}
where the first implication follows from~\eqref{eqn.x} and the second from Lemma~\ref{lemma.key-map}.
This shows $(u,vs_{n-i})\in \mathcal{S}(wy_0)$ since $uvs_{n-i} = w' y_0 s_{n-i} = w' s_{i-p} y_0 = wy_0$ and 
\[
\ell(w'y_0)= \ell(u)+\ell(v) \Rightarrow  \ell(y_0)-\ell(w)-1 = \ell(u)+\ell(v) \Rightarrow \ell(wy_0) = \ell(u) + \ell(vs_{n-i})
\]
by~\eqref{eqn.v.length} above.  Using~\eqref{eqn.x} we conclude $x=\psi_w(u, vs_{n-1})$ so $\psi_w$ is indeed surjective.
\end{proof}

\begin{example} Let $q=4$ and $w=3214 = s_1s_2s_1\in \S_4$. Then $wy_0 = 4123 = s_3s_2s_1$ and 
\[
\mathcal{S}(wy_0) = \mathcal{S}(s_3s_2s_1) = \{ (s_3s_2s_1,e), (s_3s_2, s_1),(s_3, s_2s_1), (e,s_3s_2s_1) \},
\]
so, according to Theorem~\ref{thm.W-set}, the $W$-set of $\gamma_{3214}$ is 
\[
\{s_3s_2s_1, s_3s_2 s_{3+p}, s_3s_{3+p}s_{2+p}, s_{3+p}s_{2+p}s_{1+p}\}.
\]
The interested reader can also confirm this using Theorem~\ref{thm.weak-order-Sq} and the poset pictured in Figure~\ref{F:S4}. Each element of the $W$-set is obtained from a saturated chain in the poset connecting $3214$ to $y_0=4321$.  Covers arising from the right weak order (respectively, left weak order) on $\S_q$ labeled by $s_i$ correspond to covers in the weak order on clans labeled by $s_{i+p}$ (respectively $s_i$).  Note that there are more chains than elements of the $W$-set, as two chains can yield the same reduced word.
\end{example}

We apply the results of Theorem~\ref{thm.W-set} to compute the cohomology class of each $K$-orbit closure $\overline{\O}_{\gamma_w}$. 
We make use of Borel's description of the integral cohomology ring $H^*(GL_n/B,\Z)$ as the ring of coinvariants, that is, 
\[
H^*(GL_n/B,\Z) \simeq \Z[ x_1,\dots, x_{n}]/I, 
\]
where $I$ is the ideal generated by the symmetric polynomials without a constant term. It is a well-known fact that the Schubert polynomial $\mf{S}_w$ is a polynomial representative for the cohomology class $[\overline{Bw_0wB/B}]$. For a more detailed definition of Schubert polynomials see~\cite{Lascoux-Schutzenberger, Manivel}. Combining Theorem~\ref{thm.W-set} with Brion's Theorem~\ref{thm.Brion} now yields the following.

\begin{proposition} \label{P:Schubertforgammaw} For all $w\in \S_q$, the cohomology class of the closure of the $K$-orbit $\O_{\gamma_w}$ is represented by the polynomial
\[
\mf{S}(\gamma_w):= \sum_{(u,v)\in \mathcal{S}(wy_0)} \mf{S}_{u\varphi(v)} = \sum_{(u,v)\in \mathcal{S}(wy_0)} \mf{S}_{u}\mf{S}_{\varphi(v)}. 
\]
\end{proposition}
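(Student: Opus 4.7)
The plan is to combine Brion's degeneration formula (Theorem~\ref{thm.Brion}) with the $W$-set calculation of Theorem~\ref{thm.W-set}, and then to dispatch the remaining equality by a standard block-disjoint factorization of Schubert polynomials. First I will apply Brion's theorem to obtain
\[
[\ov{\O_{\gamma_w}}] = \sum_{x\in W(\gamma_w)} [\ov{Bw_0 xB/B}]
\]
in $H^*(\B;\Z)$. Under Borel's presentation of the cohomology ring, each class $[\ov{Bw_0 xB/B}]$ is represented by the Schubert polynomial $\mf{S}_x$. Next I invoke Theorem~\ref{thm.W-set}, which furnishes a bijection $\psi_w:\mathcal{S}(wy_0)\to W(\gamma_w)$ with $\psi_w(u,v)=u\varphi(v)$; reindexing the sum along $\psi_w$ immediately yields the first claimed equality $\mf{S}(\gamma_w) = \sum_{(u,v)\in \mathcal{S}(wy_0)} \mf{S}_{u\varphi(v)}$.

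For the second equality it suffices to verify $\mf{S}_{u\varphi(v)} = \mf{S}_u\, \mf{S}_{\varphi(v)}$ for each pair $(u,v)\in \mathcal{S}(wy_0)$. Lemma~\ref{lem.phi.Sq} guarantees that the supports $\Supp(u)\subseteq\{s_1,\dots,s_{q-1}\}$ and $\Supp(\varphi(v))\subseteq\{s_{p+1},\dots,s_{n-1}\}$ are disjoint, separated by the gap $\{s_q,\dots,s_p\}$ of simple reflections unused by either permutation (here one uses $q\leq p$), while Lemma~\ref{lemma.key-map} records that $\ell(u\varphi(v)) = \ell(u)+\ell(\varphi(v))$. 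Thus $u$ permutes only $[q]$, $\varphi(v)$ permutes only $\{p+1,\dots,n\}$, and their product sits in the commuting parabolic factor $\S_q\times \S_{\{p+1,\dots,n\}}\hookrightarrow \S_n$ with additive length. In this block-disjoint setting a concatenation of reduced words for $u$ and $\varphi(v)$ is a reduced word for $u\varphi(v)$, and the Lascoux--Sch\"utzenberger divided-difference recursion splits into independent recursions on the disjoint variable blocks $\{x_1,\dots,x_{q-1}\}$ and $\{x_{p+1},\dots,x_{n-1}\}$, giving the factorization.

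The only nontrivial point is this last factorization of Schubert polynomials. The ingredients of length-additivity and disjoint support have already been established in Lemmas~\ref{lem.phi.Sq} and~\ref{lemma.key-map}, so the main obstacle reduces to invoking (or briefly proving, say by induction on $\ell(u)+\ell(v)$ using Monk's formula or divided differences) the following well-known fact: if two permutations have supports contained in disjoint intervals of consecutive simple reflections separated by at least one unused reflection, and the lengths add, then the Schubert polynomial of the product equals the product of their Schubert polynomials. Once this is in hand, the proposition follows by combining Steps 1 and 2 with the factorization termwise.
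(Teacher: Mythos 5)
Your proof is correct and follows essentially the same route as the paper: apply Brion's Theorem~\ref{thm.Brion} to express the class as a sum of Schubert polynomials over $W(\gamma_w)$, reindex via the bijection $\psi_w$ from Theorem~\ref{thm.W-set}, and then factor each $\mf{S}_{u\varphi(v)}$ as $\mf{S}_u\mf{S}_{\varphi(v)}$ using the disjointness of supports from Lemma~\ref{lem.phi.Sq}. The paper simply cites \cite[Corollary~2.4.6]{Manivel} for the factorization fact you sketch a proof of; otherwise the arguments coincide.
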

\begin{proof} By Theorem~\ref{thm.Brion}, the polynomial representative of the cohomology class of $\ov{\O_{\gamma_w}}$ is given by the formula  
\begin{align*}
\mf{S}(\gamma_w) := \sum_{x\in W(\gamma_w)} \mf{S}_x,
\end{align*}
where $\mf{S}_x$ is the Schubert polynomial indexed by the permutation $x\in \S_{n}$. By Theorem~\ref{thm.W-set} each $x\in W(\gamma_w)$ can be written $x = u \varphi(v)$ for a unique $(u,v)\in \mathcal{S}(wy_0)$.  The result now follows immediately, as $u$ and $\varphi(v)$ have disjoint supports by Lemma~\ref{lem.phi.Sq}, so $\mf{S}_{u\varphi(v)} = \mf{S}_u \mf{S}_{\varphi(v)}$ (see, for example, \cite[Corollary~2.4.6]{Manivel}.)
\end{proof}

The following is now immediate from Theorem~\ref{thm.irreducible}.

\begin{corollary}\label{cor.Hess.class} For all $w\in \S_q$ avoiding the pattern $231$, the polynomial representative of the cohomology class of Hessenberg variety $\Hess(\x_{p,q},\m(w))$ is given by 
\[
\mf{S}(\Hess(\x_{p,q}, \m(w)))= \sum_{(u,v)\in \mathcal{S}(wy_0)} \mf{S}_{u}\mf{S}_{\varphi(v)}.
\]
\end{corollary}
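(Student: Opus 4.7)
The plan is to observe that this corollary is an immediate consequence of two results already established in the excerpt, namely Theorem~\ref{thm.irreducible} and Proposition~\ref{P:Schubertforgammaw}. The strategy is to identify the Hessenberg variety with a specific $K$-orbit closure and then apply the general formula for the cohomology class of such an orbit closure.

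First, I would invoke Theorem~\ref{thm.irreducible}, which asserts that for $w\in \S_q$ avoiding $231$, the Hessenberg variety $\Hess(\x_{p,q},\m(w))$ is irreducible and satisfies
\[
\Hess(\x_{p,q},\m(w)) = \overline{\O_{\gamma_w}}.
\]
Since the two varieties coincide as subvarieties of $\B$, they represent the same cohomology class in $H^*(\B;\Z)$, so any polynomial representative of $[\overline{\O_{\gamma_w}}]$ in $\Z[x_1,\ldots,x_n]/I$ also represents $[\Hess(\x_{p,q},\m(w))]$.

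Next, I would apply Proposition~\ref{P:Schubertforgammaw}, which supplies exactly such a polynomial representative for $[\overline{\O_{\gamma_w}}]$ in the form
\[
\mf{S}(\gamma_w) = \sum_{(u,v)\in \mathcal{S}(wy_0)} \mf{S}_{u}\mf{S}_{\varphi(v)}.
\]
Combining this with the identification from Theorem~\ref{thm.irreducible} yields the claimed formula for $\mf{S}(\Hess(\x_{p,q},\m(w)))$.

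There is no real obstacle here, as the two ingredients do all the work: Theorem~\ref{thm.irreducible} is the geometric input that translates the Hessenberg variety into a $K$-orbit closure (which is where the $231$-avoidance hypothesis enters), while Proposition~\ref{P:Schubertforgammaw}, which in turn rests on Brion's Theorem~\ref{thm.Brion} and the $W$-set computation in Theorem~\ref{thm.W-set}, provides the Schubert expansion. The proof is therefore just a one-line citation of these two results.
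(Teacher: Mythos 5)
Your proof is correct and matches the paper's own argument exactly: the paper states the corollary is immediate from Theorem~\ref{thm.irreducible} (which identifies $\Hess(\x_{p,q},\m(w))$ with $\overline{\O_{\gamma_w}}$) combined with Proposition~\ref{P:Schubertforgammaw}.
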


\begin{example}\label{ex:cohomclass}
Let $q=3$. For the permutation $w=123 \in \S_3$, the $W$-set of $\gamma_{123}$ is 
\[\{s_1s_2s_1, s_{p+1}s_{p+2}s_{p+1}, s_1s_2s_{p+2}, s_2s_1s_{p+1}, s_1s_{p+2}s_{p+1}, s_2s_{p+1}s_{p+2} \}.\] It follows from Corollary~\ref{cor.Hess.class} that the polynomial representative of the cohomology class for the Hessenberg variety
$\Hess(\x_{p,3}, \m(123))$ is
\[
 \mf{S}_{s_1s_2s_1} + \mf{S}_{s_{p+1}s_{p+2}s_{p+1}} + \mf{S}_{s_1s_2s_{p+2}} + \mf{S}_{s_2s_1s_{p+1}} + \mf{S}_{s_1s_{p+2}s_{p+1}} + \mf{S}_{s_2s_{p+1}s_{p+2}}.
\]

Applying a similar calculation to the permutation $w= 213 \in \S_3$  gives us the polynomial
\[
\mf{S}_{s_2s_1} + \mf{S}_{s_2s_{p+2}}
+ \mf{S}_{s_{p+2}s_{p+1}}, \]
representing the cohomology class for the Hesseberg variety $\Hess(\x_{p,3}, \m(213))$,
as the $W$-set of $\gamma_{213}$ is $\{s_2s_1, s_2s_{p+2}, s_{p+2}s_{p+1}\}$.
\end{example}

Our final goal is to understand the intersection of the closure of the orbit corresponding to the clan $\gamma_w$ with a ``basic hyperplane of $G L_n / B$.'' Here, by a basic hyperplane of $G L_n / B$, we mean the Schubert divisor $X_{s_iw_{0}}$, and $i \in [n-1]$. Such an intersection is succinctly expressed in the cohomology ring by Monk's formula~\cite[Theorem~2.7.1]{Manivel}.

\begin{lemma}[Monk's formula] For all $u \in \mathbf{S}_n$ and all $m \in[n-1]$,
$$
\mathfrak{S}_{s_m} \mathfrak{S}_u =\sum_{\substack{j \leq m<k \\ \ell\left(u t_{j k}\right)=\ell(u)+1}} \mathfrak{S}_{u t_{j k}},
$$
where $t_{j k}$ is the transposition in $\mathbf{S}_n$ that interchanges $j$ and $k$ and leaves every other number fixed.

\end{lemma}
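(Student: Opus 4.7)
The plan is to prove Monk's formula by the standard algebraic approach, using the divided difference operators $\partial_k$ from the inductive definition of Schubert polynomials. A geometric alternative, applying the Chevalley multiplication formula to the divisor class $[X_{s_m w_0}]$ on $GL_n/B$, would work equally well, but the algebraic route is self-contained.

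First, I would establish the explicit formula $\mf{S}_{s_m} = x_1 + x_2 + \cdots + x_m$. Starting from the normalization $\mf{S}_{w_0} = x_1^{n-1} x_2^{n-2} \cdots x_{n-1}$ and applying divided differences along any reduced expression for $w_0 s_m$, a direct computation yields this identity.

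Next, I would proceed by induction on $\ell(u)$. The base case $u = e$ is immediate, since the only transposition with $j \leq m < k$ and $\ell(t_{jk}) = 1$ is $t_{m,m+1} = s_m$, so both sides collapse to $\mf{S}_{s_m}$. For the inductive step, choose $k$ with $\ell(us_k) < \ell(u)$, then apply $\partial_k$ to the identity at $us_k$. Using the twisted Leibniz rule
\[
\partial_k(fg) = (\partial_k f)\,g + (s_k \cdot f)(\partial_k g),
\]
together with the standard identities $\partial_k \mf{S}_v = \mf{S}_{v s_k}$ when $\ell(vs_k) < \ell(v)$ and $\partial_k \mf{S}_v = 0$ otherwise, one unwinds both sides of the identity at $us_k$ to obtain the desired identity at $u$.

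The main obstacle is the combinatorial matching of transpositions between the two sides. When $\partial_k$ is applied to the summation $\sum \mf{S}_{us_k \cdot t_{jk'}}$ on the RHS at $us_k$, one needs to verify that the resulting Schubert polynomials match exactly with the sum $\sum \mf{S}_{u t_{j'k''}}$ called for by Monk's formula at $u$. This requires careful tracking of how right multiplication by $s_k$ acts on the relevant set of reflections $\{t_{jk'} : j \leq m < k'\}$ together with the length conditions, and this bookkeeping occupies the bulk of the classical proofs in Macdonald's and Manivel's treatments.
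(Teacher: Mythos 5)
The paper does not prove this lemma: it states Monk's formula as a known classical result and cites \cite[Theorem~2.7.1]{Manivel} for the proof. So there is no in-paper argument to compare against.

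Evaluated on its own terms, your outline correctly identifies the standard algebraic proof: verify $\mathfrak{S}_{s_m} = x_1 + \cdots + x_m$, then induct on $\ell(u)$ using a descent $s_k$ of $u$ and the twisted Leibniz rule for $\partial_k$. The base case and the general strategy are sound. However, you explicitly stop short of the inductive step that carries all the content. Applying $\partial_k$ to the identity at $us_k$ produces, on the right-hand side, a sum over transpositions $t_{j,k'}$ with $j \le m < k'$ and $\ell(us_k t_{j,k'}) = \ell(us_k)+1$; for each such term one must determine whether $\partial_k \mathfrak{S}_{us_k t_{j,k'}}$ vanishes or equals $\mathfrak{S}_{us_k t_{j,k'} s_k}$, and then show that the surviving permutations $us_k t_{j,k'} s_k$, after conjugating the transposition by $s_k$, are exactly the $u t_{j',k''}$ with $j' \le m < k''$ and $\ell(u t_{j',k''}) = \ell(u)+1$, with no collisions or omissions. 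One must also account for the contribution of the Leibniz term $(s_k \cdot \mathfrak{S}_{s_m})\,\partial_k \mathfrak{S}_{us_k}$ on the left-hand side, which in general is nonzero since $\mathfrak{S}_{s_m}$ is not $s_k$-invariant when $k=m$. These are precisely the cases where the bijection is delicate (e.g., when $\{j,k'\} \cap \{k,k+1\} \neq \emptyset$), and acknowledging that ``this bookkeeping occupies the bulk of the classical proofs'' is not a substitute for doing it. As written, the proposal is an accurate roadmap rather than a proof.
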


\begin{example} Let $q=3$.
We know from Example~\ref{ex:cohomclass} that the cohomology class for the Hessenberg variety $\Hess(\x_{p,3}, \m(123))$ represented by 
\[
\mf{S}_{s_1s_2s_1} + \mf{S}_{s_{p+1}s_{p+2}s_{p+1}} + \mf{S}_{s_1s_2s_{p+2}} + \mf{S}_{s_2s_1s_{p+1}} + \mf{S}_{s_1s_{p+2}s_{p+1}} + \mf{S}_{s_2s_{p+1}s_{p+2}}.
\]
Now, let us use Monk's formula to understand the product $\mathfrak{S}_{s_m}\mathfrak{S}({\gamma_{123}})$ for all $m< n=6$. We note that in each case, the product is a $0-1$ sum of Schubert polynomials.

Multiplying by $\mf{S}_{s_1}$ gives us 
\begin{align*}
\mf{S}_{s_3s_1s_2s_1} &+ \mf{S}_{s_1s_{p+1}s_{p+2}s_{p+1}} + \mf{S}_{s_1s_2s_1s_{p+2}} + \mf{S}_{s_{p+1}s_3s_2s_1} +\\ &\mf{S}_{s_3s_2s_1s_{p+1}} + \mf{S}_{s_2s_1s_{p+2}s_{p+1}} +\mf{S}_{s_2s_1s_{p+1}s_{p+2}} + \mf{S}_{s_1s_2s_{p+1}s_{p+2}}.
\end{align*}

Multiplying by $\mf{S}_{s_2}$ gives us 
\begin{align*}
\mf{S}_{s_3s_1s_2s_1} &+ \mf{S}_{s_2s_3s_1s_2} + \mf{S}_{s_2s_{p+1}s_{p+2}s_{p+1}} + \mf{S}_{s_3s_1s_2s_{p+2}} + \\ 
&\mf{S}_{s_{p+1}s_3s_2s_1} + \mf{S}_{s_3s_2s_1s_{p+1}} + \mf{S}_{s_1s_2s_1s_{p+1}} +\mf{S}_{s_2s_1s_{p+2}s_{p+1}} +\\
&\mf{S}_{s_1s_2s_{p+2}s_{p+1}} + \mf{S}_{s_1s_2s_{p+1}s_{p+2}} + \mf{S}_{s_{p+1}s_{p+2}s_3s_2} + \mf{S}_{s_3s_2s_{p+1}s_{p+2}}.
\end{align*}

Multiplying by $\mf{S}_{s_3}$ gives us 
\begin{align*}
\mf{S}_{s_2s_1s_3s_2} &+\mf{S}_{s_1s_2s_1s_3} + \mf{S}_{s_{p+1}s_{p+2}s_{p+1}s_3} + \mf{S}_{s_{p+1}s_3s_{p+2}s_{p+1}} + \\ &\mf{S}_{s_3s_{p+1}s_{p+2}s_{p+1}} + 
\mf{S}_{s_3s_1s_2s_{p+2}} + \mf{S}_{s_1s_2s_3s_{p+2}} + \mf{S}_{s_{p+1}s_3s_2s_1} +\\
&\mf{S}_{s_3s_{p+1}s_2s_1} + \mf{S}_{s_{p+1}s_2s_1s_3} + \mf{S}_{s_2s_1s_3s_{p+1}} + 
\mf{S}_{s_{p+2}s_{p+1}s_1s_3} + \\ &\mf{S}_{s_3s_1s_{p+2}s_{p+1}} + \mf{S}_{s_3s_2s_{p+1}s_{p+2}} + \mf{S}_{s_2s_{p+1}s_{p+2}s_3} + \mf{S}_{s_2s_3s_{p+1}s_{p+2}}.
\end{align*}

Multiplying by $\mf{S}_{s_4}$ gives us 
\begin{align*}
\mf{S}_{s_{p+1}s_1s_2s_1} &+ \mf{S}_{s_{p+1}s_{p+2}s_3s_{p+1}} + \mf{S}_{s_3s_{p+1}s_{p+2}s_{p+1}} + \mf{S}_{s_1s_2s_{p+2}s_{p+1}} + \\ 
&\mf{S}_{s_1s_2s_{p+1}s_{p+2}} + \mf{S}_{s_3s_2s_1s_{p+1}} + \mf{S}_{s_2s_1s_3s_{p+1}} +\mf{S}_{s_2s_1s_{p+2}s_{p+1}} +\\
&\mf{S}_{s_3s_1s_{p+2}s_{p+1}} + \mf{S}_{s_3s_2s_{p+1}s_{p+2}} + \mf{S}_{s_2s_3s_{p+1}s_{p+2}} + \mf{S}_{s_1s_{p+1}s_{p+2}s_{p+1}}.
\end{align*}

Multiplying by $\mf{S}_{s_5}$ gives us 
\begin{align*}
\mf{S}_{s_3s_{p+1}s_{p+2}s_{p+1}} &+ \mf{S}_{s_{p+2}s_1s_2s_1} + \mf{S}_{s_1s_2s_{p+1}s_{p+2}} + \mf{S}_{s_2s_1s_{p+2}s_{p+1}} +\\ &\mf{S}_{s_2s_1s_{p+1}s_{p+2}} + \mf{S}_{s_1s_{p+1}s_{p+2}s_{p+1}} +\mf{S}_{s_3s_2s_{p+1}s_{p+2}} + \mf{S}_{s_2s_3s_{p+1}s_{p+2}}.
\end{align*}
\end{example}

\medskip

We can use Monk's formula to understand the product $\mathfrak{S}_{s_m}\mathfrak{S}({\gamma_w})$ in general.
In particular, we show that the product $\mf{S}_{s_m}\mf{S}({\gamma_w})$ is a $0-1$ sum of Schubert polynomials for all $m < n$.

\begin{theorem} \label{thm.monkmultfree}
If $m \in [n-1]$ and $w\in \S_q$, then the product $\mathfrak{S}_{s_m}\mathfrak{S}({\gamma_w})$ is a multiplicity-free sum of Schubert polynomials. 
\end{theorem}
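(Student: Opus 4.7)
The plan is to apply Proposition~\ref{P:Schubertforgammaw} and Monk's formula, giving
\[
\mathfrak{S}_{s_m}\mathfrak{S}(\gamma_w) \;=\; \sum_{(u,v,j,k) \in \mathcal{A}} \mathfrak{S}_{u\varphi(v)t_{jk}},
\]
where $\mathcal{A}$ is the set of four-tuples $(u,v,j,k)$ with $(u,v) \in \mathcal{S}(wy_0)$, $j \leq m < k$, and $\ell(u\varphi(v)t_{jk}) = \ell(u\varphi(v))+1$. The task therefore reduces to proving that the map
\[
\Phi \colon \mathcal{A} \longrightarrow \S_n, \qquad (u, v, j, k) \longmapsto u\varphi(v) t_{jk},
\]
is injective, which immediately yields the multiplicity-free assertion.

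The structural input is that, by Lemma~\ref{lem.phi.Sq}, the one-line notation of $x := u\varphi(v)$ has a block form with respect to the decomposition $[n] = L \sqcup M \sqcup R$, where $L := [1,q]$, $M := [q+1,p]$, and $R := [p+1,n]$: namely $x$ acts on $L$ by $u$, fixes $M$ pointwise, and acts on $R$ by $\varphi(v)$, with values in each block staying in that block. I would then carry out a case analysis on the position-types of $(j, k)$ in $\{L, M, R\}$, giving six cases $(L,L), (L,M), (L,R), (M,M), (M,R), (R,R)$. Under the standing assumption $p \geq q$, no $m$ simultaneously satisfies $m < q$ and $m \geq p+1$, so cases $(L,L)$ and $(R,R)$ are mutually exclusive for a fixed $m$.

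The key step is to show, for each case, that $\sigma := \Phi(u,v,j,k)$ both detects the case and uniquely determines $(u, v, j, k)$. The number of positions in $M$ at which $\sigma$ deviates from the identity is $0$ in cases $(L, L), (L, R), (R, R)$; exactly $1$ in cases $(L, M), (M, R)$, which are then separated by whether the deviant value lies in $L$ or in $R$; and exactly $2$ in case $(M, M)$. Within the zero-deviation class, $(L, R)$ is separated from $(L, L)$ and $(R, R)$ by the fact that $\sigma$ fails to preserve $L$. The reconstructions within each case run as follows: in $(L, L)$ and $(R, R)$ one reads off $\varphi(v) = \sigma|_R$ (respectively $u = \sigma|_L$), forces the other factor via $uv = wy_0$, and then extracts $t_{jk}$ from the remaining block; in the mixed cases $(L, M), (L, R), (M, R)$ the misplaced entries pinpoint the pair $(j, k)$ directly, and the unaffected positions recover $u$ and $\varphi(v)$; in $(M, M)$, the swapped pair in $M$ gives $\{j, k\}$, while $\sigma|_L$ and $\sigma|_R$ determine $u$ and $\varphi(v)$.

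The main obstacle will be the case-by-case bookkeeping, in particular verifying that the length condition from Monk's formula is compatible with each reconstruction recipe and that no two distinct cases collide on a common $\sigma$. Once this analysis is complete, injectivity of $\Phi$ holds, and $\mathfrak{S}_{s_m}\mathfrak{S}(\gamma_w)$ is a multiplicity-free sum of Schubert polynomials.
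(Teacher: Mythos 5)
Your proposal follows essentially the same route as the paper's proof: expand via Proposition~\ref{P:Schubertforgammaw} and Monk's formula, then prove injectivity of $(u,v,j,k)\mapsto u\varphi(v)t_{jk}$ by a case analysis on the block positions of $j$ and $k$ in $[q]$, $\{q+1,\dots,p\}$, $\{p+1,\dots,n\}$, using the block structure of $u\varphi(v)$ to recover the data from $\sigma$. The only cosmetic difference is that you split into six block-pairs where the paper merges $(L,M)$ and $(L,R)$ into a single case, and you make explicit the $m$-based mutual exclusion of $(L,L)$ and $(R,R)$ that the paper leaves implicit.
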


\begin{proof} 
It follows from Proposition~\ref{P:Schubertforgammaw} that
\begin{eqnarray}\label{eqn.prod1}
\mf{S}_{s_m} \mathfrak{S}({\gamma_w})=  \sum_{(u,v) \in \mathcal{S}(wy_0)}  \mathfrak{S}_{s_m} \mf{S}_{u\varphi(v)}.
\end{eqnarray}
We apply Monk's formula to each product $\mf{S}_{s_m}\mf{S}_{u \varphi(v)}$ and obtain
\begin{align}\label{eqn.prod2}
\mf{S}_{s_m}\mf{S}_{u\varphi(v)} =\displaystyle \sum_{\substack{j \leq m<k \\ \ell\left(u\varphi(v) t_{j k}\right)=\ell(u)+\ell(v)+1}} \mf{S}_{u\varphi(v)t_{j k}}.
\end{align}
Suppose there exists pairs 
$(u_1,v_1)$ and $(u_2,v_2)$ in $\mathcal{S}(w)$ such that 
\begin{eqnarray}\label{eqn.eq}
u_1\varphi(v_1)t_{jk} = u_2\varphi(v_2)t_{j'k'}
\end{eqnarray}
for some $j,k,j',k'$ such that $j\leq m<k$ and $j'\leq m<k'$.  To complete the proof, we show that~\eqref{eqn.eq} implies $u_1=u_2$ and $v_1=v_2$. 

Write $x_1=u_1\varphi(v_1)$ and $x_2=u_2\varphi(v_2)$ for the remainder of the proof.  We begin with a few observations. By construction, each permutation $x_i=u_i\varphi(v_i)$ with $ i\in \{1,2\}$ satisfies 
\begin{eqnarray}\label{eqn.perm1}
[ q] =\{x_i(1), \ldots, x_i(q)\}
\end{eqnarray}
and 
\begin{eqnarray}\label{eqn.perm2}
x_i(a)=a \textup{ for all } q+1\leq a \leq p,
\end{eqnarray}
and 
\begin{eqnarray}\label{eqn.perm3}
[n]\setminus [p] = \{x_i(p+1), \ldots, x_i(n)\}.
\end{eqnarray}
We obtain the one-line notation of $x_1t_{jk}$ from that of $x_1$ by exchanging the entries in positions $j$ and $k$ and similarly for $x_2t_{j'k'}$. These observations imply that pairs $j<k$ and $j'<k'$ satisfying~\eqref{eqn.eq} must fall into one of the following cases:
\begin{enumerate}
\item $j,j'\in [q]$, $k,k'\in [n]\setminus [q]$, 
\item $j,j',k,k'\in [q]$,
\item $j,j',k,k'\in \{q+1, \ldots, p\}$,
\item $j,j'\in \{q+1, \ldots, p\}$ and $k,k'\in [n]\setminus [p]$, and
\item  $j,j',k,k'\in [n]\setminus [p] $.
\end{enumerate}
Note that cases (3) and (4) do not arise when $p=q$.  

\medskip
We begin with Case (1). 
In this case,  the equality~\eqref{eqn.eq} and equation~\eqref{eqn.perm1} imply
\begin{eqnarray*}
\{k\} &=& ([n]\setminus [q]) \cap \{(x_1t_{jk})^{-1}(1), \ldots,  (x_1t_{jk})^{-1}(q)\}\\ &=&([n]\setminus [q]) \cap  \{(x_2t_{j'k'})^{-1}(1), \ldots,  (x_2t_{j'k'})^{-1}(q)\} =  \{k'\}
\end{eqnarray*}
so $k= k'$.  Similarly, using~\eqref{eqn.eq}, \eqref{eqn.perm2}, and~\eqref{eqn.perm3} we have
\begin{eqnarray*}
\{j\} &=& [q] \cap \{(x_1t_{jk})^{-1}(q+1), \ldots,  (x_1t_{jk})^{-1}(n)\}\\ &=&[q] \cap  \{(x_2t_{j'k'})^{-1}(q+1), \ldots,  (x_2t_{j'k'})^{-1}(n)\} =  \{j'\}
\end{eqnarray*}
so $j=j'$. Now~\eqref{eqn.eq} implies $x_1=x_2$ and thus $u_1=u_2$ and $v_1=v_2$ by Lemma~\ref{lemma.key-map}.  Case (4) follows by similar reasoning, so we omit it to avoid repetition.

\medskip
Now suppose we are in the setting of Case (2). Then~\eqref{eqn.eq} becomes $u_1t_{jk}\varphi(v_1) = u_2t_{j'k'}\varphi(v_2)$ since $t_{jk}$ and $t_{j'k'}$ commute with $\varphi(v_1)$ and $\varphi(v_2)$. By Lemma~\ref{lem.phi.Sq}, we also know that both of the following sets
\[
\operatorname{Supp}(u_1t_{jk}) \cap \operatorname{Supp}(\varphi(v_1))\quad\text{and}\quad \operatorname{Supp}(u_2t_{j'k'})\cap \operatorname{Supp}(\varphi(v_2))
\]
are empty. Consequently, $\operatorname{Supp}(\varphi(v_1)) = \operatorname{Supp}(\varphi(v_2)) $ implying that $\varphi(v_1)= \varphi(v_2)$, and hence $v_1 = v_2$ by Lemma~\ref{L:Marthasmap}.
It now follows from the definition of the set $\mathcal{S}(wy_0)$ that $u_1=u_2$. Indeed, we have $$u_1v_1 = wy_0 = u_2v_2 \textup{ and } v_1=v_2 \Rightarrow u_1v_1 = u_2v_1 \Rightarrow u_1=u_2.$$
The proof of Case (5) is almost identical to that of (2), so we omit it to avoid repetition.

\medskip
Finally, we consider Case (3).
The equality~\eqref{eqn.eq} and equation~\eqref{eqn.perm2} immediately imply that $j=j'$ and $k=k'$. Thus $x_1=x_2$ and we conclude $u_1=u_2$ and $v_1=v_2$ as before. This finishes the proof of our theorem.
\end{proof}


\end{document}